\def\NZQ{\mathbb}               
\def\ZZ{{\NZQ Z}}
\def\RR{{\NZQ R}}
\def\ab{{\mathbf a}}
\def\eb{{\mathbf e}}
\def\tb{{\mathbf t}}
\def\wb{{\mathbf w}}
\def\xb{{\mathbf x}}
\def\yb{{\mathbf y}}
\def\opn#1#2{\def#1{\operatorname{#2}}} 
\opn\gr{gr}
\def\Ac{{\mathcal A}}
\def\Jc{{\mathcal J}}
\def\Gc{{\mathcal G}}
\def\Fc{{\mathcal F}}
\def\Oc{{\mathcal O}}
\def\Pc{{\mathcal P}}
\def\Qc{{\mathcal Q}}
\def\Cc{{\mathcal C}}
\def\comax{{\textnormal{comax}}}
\def\comin{{\textnormal{comin}}}
\newtheorem{Theorem}{Theorem}[section]
\newtheorem{Lemma}[Theorem]{Lemma}
\newtheorem{Corollary}[Theorem]{Corollary}
\newtheorem{Proposition}[Theorem]{Proposition}
\theoremstyle{definition}
\newtheorem{Example}[Theorem]{Example}
\let\epsilon\varepsilon
\let\phi=\varphi
\let\kappa=\varkappa
\opn\dis{dis}
\opn\height{height}
\opn\dist{dist}
\def\pnt{{\raise0.5mm\hbox{\large\bf.}}}
\opn\Lex{Lex}
\opn\conv{conv}
\begin{document}

\title{Enriched order polytopes and Enriched Hibi rings}
\author{Hidefumi Ohsugi and Akiyoshi Tsuchiya}
\address{Hidefumi Ohsugi,
	Department of Mathematical Sciences,
	School of Science and Technology,
	Kwansei Gakuin University,
	Sanda, Hyogo 669-1337, Japan} 
\email{ohsugi@kwansei.ac.jp}

\address{Akiyoshi Tsuchiya,
Graduate School of Mathematical Sciences,
University of Tokyo,
Komaba, Meguro-ku, Tokyo 153-8914, Japan} 
\email{akiyoshi@ms.u-tokyo.ac.jp}

\subjclass[2010]{05A15, 13P10, 52B12, 52B20}
\keywords{reflexive polytope, flag triangulation,  left enriched partition, left enriched order polynomial,
Gr\"{o}bner basis, toric ideal}

\begin{abstract}
Stanley introduced two classes of lattice polytopes associated to posets,
which are called the order polytope ${\mathcal O}_P$ and the chain polytope ${\mathcal C}_P$ of a poset $P$.
It is known that, given a poset $P$, the Ehrhart polynomials of ${\mathcal O}_P$ and ${\mathcal C}_P$
are equal to the order polynomial of $P$ that counts the $P$-partitions. 
In this paper, we introduce the enriched order polytope of a poset $P$
and show that it is a reflexive polytope whose Ehrhart polynomial is equal to that of 
the enriched chain polytope of $P$ and the left enriched order polynomial of $P$
 that counts the left enriched $P$-partitions,
by using the theory of Gr\"{o}bner bases.
The toric rings of enriched order polytopes are called enriched Hibi rings.
It turns out that enriched Hibi rings are normal, Gorenstein, and Koszul.
The above result implies the existence of a bijection between
the lattice points in the dilations of $\Oc^{(e)}_P$ and  
$\Cc^{(e)}_P$.
Towards such a bijection, we give the facet representations of enriched order and chain polytopes.
\end{abstract}

\maketitle

\section{Introduction}
A {\em lattice polytope} in $\RR^n$ is a convex polytope all of whose vertices are in $\ZZ^n$.
In \cite{twoposetpolytopes}, Stanley introduced a class of lattice polytopes associated to finite partially ordered sets ({\em poset} for short).
Let $(P, <_P)$ be a finite poset on $[n]:=\{1,\ldots,n\}$.
The {\em order polytope} $\Oc_P$ of $P$ is the convex polytope consisting of the set of points $(x_1,\ldots,x_n) \in \RR^n$ such that
\begin{enumerate}
	\item $0 \leq x_i \leq 1$ for $1 \leq i \leq n$,
	\item $x_i \leq x_j$ if $i <_P j$.
\end{enumerate}
Then $\Oc_P$ is a lattice polytope of dimension $n$.
In fact, each vertex of $\Oc_P$ corresponds to a filter of $P$.
Here, a subset $F$ of $P$ is called a {\em filter} of $P$ if $i \in F$ and $j \in P$ together with $i <_P j$ guarantee $j \in F$.
For a subset $X \subset [n]$, we define the $(0,1)$-vector $\eb_X:=\sum_{i \in X}\eb_i$, where $\eb_1,\ldots,\eb_n$ are the canonical unit coordinate vectors of $\RR^n$.
Then one has $\Oc_P \cap \ZZ^n=\{\eb_F : F \in \Fc(P)\}$, where $\Fc(P)$ is the set of filters of $P$. 
Moreover, there is a close interplay between the combinatorial structure of $P$ and the geometric structure of $\Oc_P$.
Assume that $P$ is {\em naturally labeled}, i.e., $i < j$ if $i <_P j$.
Let $\ZZ_{\ge 0}$ be the set of nonnegative integers.
A map $f : P \to \ZZ_{\ge 0}$ is called a {\em $P$-partition} if for all $x,y \in P$ with $x <_P y$, $f$ satisfies $f(x) \leq f(y)$.
We identify a $P$-partition $f$ with a lattice point $(f(1),\ldots,f(n)) \in \ZZ^n$.
Since every $P$-partition $f : P \to \ZZ_{\ge 0}$ with $f(i) \leq 1$ is a filter of $P$, the set of $P$-partitions $f : P \to \ZZ_{\ge 0}$ with $f(i) \leq 1$ coincides with $\Oc_P \cap \ZZ^n$. 
Moreover, the set of $P$-partitions $f : P \to \ZZ_{\ge 0}$ with $f(i) \le m$ coincides with $m\Oc_P \cap \ZZ^n$
for $0 < m \in \ZZ$.
Here, for a convex polytope $\Pc \subset \RR^n$, $m\Pc:=\{m\xb : \xb \in \Pc\}$ is the $m$-th dilated polytope.

In the present paper, we define a new class of lattice polytopes associated to finite posets from a viewpoint of the theory of enriched $P$-partitions.
For a filter $F$ of $P$, we set $F_{\text{min}}:=\min(F)$ and $F_{{\text{comin}}}:=F \setminus F_{\min}$, where $\min(F)$ is the set of minimal elements of $F$.
For a subset $X=\{i_1,\ldots,i_r\} \subset [n]$ and a vector $\varepsilon=(\varepsilon_1,\ldots,\varepsilon_r) \in \{-1,1\}^{r}$, we define the $(-1,0,1)$-vector $\eb_X^{\varepsilon}:=\sum_{j=1}^{r} \varepsilon_j\eb_{i_j}$.
The \textit{enriched order polytope} $\Oc^{(e)}_P \subset \RR^n$ of a finite (not necessarily naturally labeled) poset $P$ on $[n]$ is the lattice polytope of dimension $n$ which is the convex hull of
\begin{equation}
\label{latticeset}
	\{
\eb_{F_{\min}}^{\varepsilon} + \eb_{F_{\comin}}
:
F \in \Fc(P), \varepsilon \in \{-1,1\}^{|F_{\min}|}
\}.
\end{equation}
Then $\Oc^{(e)}_P \cap \ZZ^n$ coincides with the set (\ref{latticeset}) above (Lemma \ref{lem:latticepoint}).
Now, we discuss a relation between $\Oc^{(e)}_P$ and the theory of enriched $P$-partitions.
Again, we assume that $P$ is naturally labeled. 
A map $f: P \rightarrow \ZZ \setminus \{0\}$ is called an 
{\em enriched $P$-partition} (\cite{StembridgeEnriched})
if, for all $x, y \in P$ with $x <_P y$, $f$ satisfies
\begin{itemize}
	\item
	$|f(x)| \le |f(y)|$;
	\item
	$|f(x)| = |f(y)|  \ \Rightarrow \ f(y) > 0$.
\end{itemize}
On the other hand,
Petersen \cite{Petersen} introduced slightly different notion ``left enriched $P$-partitions'' as follows.
A map $f: P \rightarrow \ZZ$ is called a {\em left enriched $P$-partition} if, for all $x, y \in P$ with $x <_P y$, $f$ satisfies
the following conditions:
\begin{itemize}
	\item[(i)]
	$|f(x)| \le |f(y)|$;
	\item[(ii)]
	$|f(x)| = |f(y)| \ \Rightarrow \ f(y) \ge 0$.
\end{itemize}
Then the set of left enriched $P$-partitions $f : P \to \ZZ$ with $|f(i)| \le 1$ coincides with $\Oc^{(e)}_P \cap \ZZ^n$.
Contrary to the case of order polytopes, the set of left enriched $P$-partitions $f : P \to \ZZ$
with $|f(i)| \le m$ does {\em not} always coincide with the set of lattice points $m\Oc^{(e)}_P \cap \ZZ^n$ for $m >1$ 
(Example~\ref{2chain}).
However, we will show that the number $\Omega_P^{(\ell)}(m)$ of left enriched $P$-partitions 
$f : P \to \ZZ$ with $|f(i)| \le m$ is equal to $|m\Oc^{(e)}_P \cap \ZZ^n|$.
Namely, 

\begin{Theorem}\label{thm:ehrhart}
For a naturally labeled finite poset $P$ on $[n]$,
let 
$$L_{\Oc^{(e)}_P}(m) = | m \Oc^{(e)}_P \cap \ZZ^n|$$
be the Ehrhart polynomial 
of $\Oc^{(e)}_P$, and let $\Omega_P^{(\ell)}(m)$ be the left enriched order polynomial 
 of $P$.
	Then one has 
$$
	L_{\Oc^{(e)}_P}(m) = L_{\Oc^{(e)}_{\overline{P}}}(m) = \Omega_P^{(\ell)} (m),
$$
where $\overline{P}$ is the dual poset of $P$.
	\end{Theorem}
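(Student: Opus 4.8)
The statement has two equalities.  The second, $L_{\Oc^{(e)}_P}(m)=L_{\Oc^{(e)}_{\overline P}}(m)$, should come from an explicit affine-unimodular equivalence between $\Oc^{(e)}_P$ and $\Oc^{(e)}_{\overline P}$: reversing a filter of $P$ to a filter of $\overline P$ (complementation $F\mapsto [n]\setminus F$ swaps filters and order ideals, and dualizing swaps order ideals of $P$ with filters of $\overline P$) together with a suitable sign change and translation should carry the generating set \eqref{latticeset} for $P$ onto that for $\overline P$.  So the real content is the first equality $L_{\Oc^{(e)}_P}(m)=\Omega^{(\ell)}_P(m)$, and here I would not try to match lattice points of $m\Oc^{(e)}_P$ with left enriched $P$-partitions directly — the excerpt warns (Example~\ref{2chain}) that these two sets genuinely differ for $m>1$.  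Instead the plan is to compute both sides as Ehrhart-type counting functions and show the generating functions agree.

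**Step 1: a unimodular triangulation / Gröbner basis of the toric ring.**  Following the ``theory of Gröbner bases'' advertised in the abstract, I would present the toric ideal of the polytope $\Oc^{(e)}_P$ (the enriched Hibi ring) and exhibit a quadratic Gröbner basis with squarefree initial ideal, exactly as in Hibi's classical treatment of $\Oc_P$.  The monomials correspond to the generators $\eb^{\varepsilon}_{F_{\min}}+\eb_{F_{\comin}}$; the binomial relations encode the lattice relations among unions/intersections of filters together with the sign bookkeeping on minimal elements.  A squarefree quadratic initial ideal gives simultaneously: normality of the ring, a regular unimodular flag triangulation $\Delta$ of $\Oc^{(e)}_P$, and hence that $L_{\Oc^{(e)}_P}(m)=\sum_{\sigma\in\Delta} \binom{m-1}{\dim\sigma}\,(\text{interior count})$ — i.e. the Ehrhart series is controlled by the $h$-vector (the $h^*$-polynomial) of $\Delta$.  (This step is also what yields Gorenstein and Koszul, presumably proved here; I would reuse whatever normality/triangulation statement the paper has already established before Theorem~\ref{thm:ehrhart}.)

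**Step 2: interpret $\Omega^{(\ell)}_P(m)$ via the same triangulation.**  On the combinatorial side, the left enriched order polynomial has (by Petersen's theory) an expansion over a set of ``left enriched $(P,\omega)$-partitions'' or equivalently over surjections/linear extensions with a left-peak statistic; concretely $\Omega^{(\ell)}_P(m)=\sum_\pi \binom{m-1+\operatorname{something}}{n}$-type terms, or more usefully a formula $\sum_{m\ge 0}\Omega^{(\ell)}_P(m)\,t^m = W_P(t)/(1-t)^{n+1}$ with $W_P$ the left-peak (descent-type) polynomial summed over the linear extensions of $P$.  The plan is to match the ``fundamental pieces'' of this expansion one-to-one with the maximal simplices of $\Delta$ from Step~1, so that the two $h^*$-polynomials coincide.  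Equivalently, I would set up a bijection between linear extensions of $P$ (decorated with signs on ``left peaks'') and the maximal faces of the order-polytope triangulation, as in the classical correspondence between linear extensions and the canonical triangulation of $\Oc_P$, then check the statistic matches the minimal generator / denominator shift of each cone.

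**Main obstacle.**  The hard part is Step~2: verifying that the peak/descent statistic appearing in Petersen's $W_P(t)$ is exactly the one read off from the Gröbner cone structure — i.e. that the combinatorial decoration on a linear extension (which elements receive a $\pm$ sign) corresponds precisely to the set of coordinates in which a lattice point of the half-open simplex must be strictly positive.  Getting the signs and the off-by-one shifts to line up is where a real argument is needed; everything else (the duality equivalence in Step~1, the general machinery turning a squarefree quadratic Gröbner basis into a unimodular flag triangulation and an $h^*$-computation) is standard once the Gröbner basis is in hand.  I would therefore spend most of the write-up pinning down the Gröbner basis explicitly and then carefully identifying its initial complex with the set of left enriched $P$-partitions "in general position".
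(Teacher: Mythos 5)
Your plan for the second equality is based on a false premise: $\Oc^{(e)}_P$ and $\Oc^{(e)}_{\overline{P}}$ are \emph{not} in general unimodularly equivalent, so no affine-unimodular map carrying the generating set \eqref{latticeset} for $P$ onto that for $\overline{P}$ can exist. The paper's Example~\ref{dualexample} (the poset on $\{1,2,3\}$ with $1,2 <_P 3$) gives a counterexample in which $\Oc^{(e)}_{P}$ has $5$ facets while $\Oc^{(e)}_{\overline{P}}$ has $6$. The reason your heuristic fails is that the signs in \eqref{latticeset} are attached specifically to $F_{\min}$, and complementation/dualization does not interact well with this asymmetric decoration. The equality $L_{\Oc^{(e)}_P}(m)=L_{\Oc^{(e)}_{\overline{P}}}(m)$ is therefore an equality of Ehrhart polynomials of genuinely different polytopes, and it has to be proved indirectly.

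Your Step~2 is also not carried out: you correctly identify that matching a peak statistic on decorated linear extensions with the initial complex is the crux, but you leave it as an ``obstacle'' rather than resolving it, so the first equality is not established either. The paper avoids this entirely by routing everything through the enriched chain polytope: it exhibits a squarefree quadratic Gr\"obner basis of $I_{\Oc^{(e)}_{\overline{P}}}$ (Theorem~\ref{enrichedorderGB}), observes that the resulting initial ideal is isomorphic, as a quotient, to the known initial ideal of $I_{\Cc^{(e)}_{\overline{P}}}$ via $x_I^{\varepsilon}\mapsto x_{\max(I)}^{\varepsilon'}$ (Theorem~\ref{lasttheorem}), and concludes $L_{\Oc^{(e)}_{\overline{P}}}(m)=L_{\Cc^{(e)}_{\overline{P}}}(m)$ because squarefree initial ideals force Ehrhart $=$ Hilbert and Hilbert polynomials are preserved under passing to initial ideals. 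Applying this to both $P$ and $\overline{P}$ and using $\Cc^{(e)}_{P}=\Cc^{(e)}_{\overline{P}}$ yields both polytope equalities at once, and the identity $L_{\Cc^{(e)}_P}(m)=\Omega_P^{(\ell)}(m)$ is then imported wholesale from the earlier enriched chain polytope paper (Proposition~\ref{enrichedchainpolytope:enrichedleft}), so the peak-statistic combinatorics you flag as the hard part never has to be redone for $\Oc^{(e)}_P$. To repair your write-up you would need to (a) abandon the unimodular-equivalence claim and (b) either complete the $h^*$-matching with Petersen's theory from scratch or adopt the paper's comparison with $\Cc^{(e)}_P$.
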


In this paper, in order to show Theorem~\ref{thm:ehrhart}, we investigate the toric ring of the enriched order polytope $\Oc^{(e)}_{\overline{P}}$.
In \cite{Hibi1987}, Hibi studied the toric ring of the order polytope $\Oc_{\overline{P}}$.
The toric ideal $I_{\Oc_{\overline{P}}}$ possesses a squarefree quadratic Gr\"{o}bner basis, that is a Gr\"{o}bner basis consisting of binomials whose initial monomials are squarefree and of degree $2$.
This implies that the toric ring $K[\Oc_{\overline{P}}]$ is a normal Cohen-Macaulay domain and Koszul.
In particular, $\Oc_{\overline{P}}$ possesses a flag regular unimodular triangulation.
The toric ring $K[\Oc_{\overline{P}}]$ is called the {\em Hibi ring} of $P$.
See \cite[Chapter~6]{binomialideals}.
We call the toric ring $K[\Oc^{(e)}_{\overline{P}}]$ the {\em enriched Hibi ring} of $P$.

\begin{Theorem}
\label{thm:flagetc}
	Let $P$ be a finite poset on $[n]$.
	Then $\Oc^{(e)}_P$ is a reflexive polytope with a flag regular unimodular triangulation.
	Moreover, the toric ring $K[\Oc^{(e)}_P]$ is a normal, Gorenstein, and Koszul.
\end{Theorem}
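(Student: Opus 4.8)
The plan is to derive all four properties from a single squarefree quadratic Gr\"{o}bner basis $\Gc$ of the toric ideal $I_{\Oc^{(e)}_P}$. Once such a $\Gc$ is available the consequences are routine: by Sturmfels' correspondence a squarefree initial ideal gives a regular unimodular triangulation $\Delta$ of $\Oc^{(e)}_P$, and hence $K[\Oc^{(e)}_P]$ is normal; the initial terms being quadratic forces $\Delta$ to be flag and makes $K[\Oc^{(e)}_P]$ $G$-quadratic, hence Koszul; and reflexivity together with the Gorenstein property will come from the palindromicity of $h^{\ast}(\Oc^{(e)}_P)=h(\Delta)$.

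To build $\Gc$, index the generators of $K[\Oc^{(e)}_P]$ by pairs $(F,\varepsilon)$ with $F\in\Fc(P)$ and $\varepsilon\in\{-1,1\}^{|F_{\min}|}$, writing $x_{F,\varepsilon}$ for the variable mapped to $t^{\eb_{F_{\min}}^{\varepsilon}+\eb_{F_{\comin}}}$. Modeled on Hibi's relations $x_Fx_G-x_{F\cap G}x_{F\cup G}$ for incomparable filters, I expect $\Gc$ to consist of two families. First, ``sign binomials'' $x_{F,\varepsilon}x_{F,\varepsilon'}-x_{F,\eta}x_{F,\eta'}$, which lie in $I_{\Oc^{(e)}_P}$ precisely when the sign multisets $\{\varepsilon_j,\varepsilon'_j\}$ and $\{\eta_j,\eta'_j\}$ agree for every $j\in F_{\min}$. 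Second, ``lattice binomials'' $x_{F,\varepsilon}x_{G,\sigma}-x_{F\cap G,\alpha}x_{F\cup G,\beta}$ for incomparable $F,G$, where the sign vectors $\alpha$ on $(F\cap G)_{\min}$ and $\beta$ on $(F\cup G)_{\min}$ are produced by an explicit ``comparison rule'' assigning to each minimal element of the meet and of the join a sign read off from the signs it carries (if any) in $F$ and in $G$. One checks that each such binomial lies in $I_{\Oc^{(e)}_P}$ by a coordinatewise computation (using, for instance, that a minimal element of $F$ not lying in $G$ is automatically minimal in $F\cup G$, and analogous facts about $\Fc(P)$), fixes a reverse lexicographic term order refining $|F_{\min}|$ and the inclusion order on $\Fc(P)$ so that all leading terms are squarefree of degree $2$, and then verifies Buchberger's criterion, enlarging $\Gc$ if needed; the $S$-pair analysis follows the familiar ``diamond'' bookkeeping for distributive lattices, now with the sign data carried through each reduction.

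For the remaining two properties, observe that the only signed filter over $\emptyset$ is $(\emptyset,())$ and that $\{(\emptyset,()),(G,\sigma)\}$ is never a minimal nonface of $\Delta$ (the associated lattice binomial is trivially $0$); hence the vertex $\mathbf 0=\eb_{\emptyset_{\min}}^{()}+\eb_{\emptyset_{\comin}}$ lies in every maximal face of $\Delta$, so $\Delta=\{\mathbf 0\}\ast\Delta'$ is the cone over $\Delta':=\operatorname{lk}_{\Delta}(\mathbf 0)$ and $h^{\ast}(\Oc^{(e)}_P)=h(\Delta)=h(\Delta')$. A short check --- e.g., via a layering $P=M_1\sqcup\cdots\sqcup M_r$ of $P$ into successive sets of minimal elements, averaging the lattice points $\eb^{\varepsilon}_{M_k}+\eb_{M_{k+1}\cup\cdots\cup M_r}$ layer by layer with weights proportional to $2^{k-1}$ --- shows that $\mathbf 0$ lies in the interior of $\Oc^{(e)}_P$. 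Therefore $\Delta'$ triangulates the sphere $\partial\Oc^{(e)}_P\cong S^{n-1}$, the Dehn--Sommerville relations give $h_i(\Delta')=h_{n-i}(\Delta')$, and so $h^{\ast}(\Oc^{(e)}_P)$ is palindromic of degree $n$ with leading coefficient $1$; this forces $\mathbf 0$ to be the unique interior lattice point and, by Hibi's criterion, makes $\Oc^{(e)}_P$ reflexive and $K[\Oc^{(e)}_P]$ Gorenstein. (Alternatively, reflexivity follows at once from an explicit list of the facet-defining inequalities of $\Oc^{(e)}_P$.)

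The crux --- and the only genuinely nontrivial part --- is the design of the comparison rule for the lattice binomials and the proof that, for the chosen term order, these binomials together with the sign binomials form a Gr\"{o}bner basis with squarefree, degree-$2$ initial ideal. The difficulty is that $(F\cap G)_{\min}$ and $(F\cup G)_{\min}$ are in general neither contained in nor contain $F_{\min}\cup G_{\min}$, so the signs on the meet and join cannot simply be inherited but must be recombined in a way that is compatible both with membership in $I_{\Oc^{(e)}_P}$ and with $S$-pair reduction; everything after the Gr\"{o}bner basis is the standard passage to flag unimodular triangulations, normality and Koszulness, plus the cone-and-Dehn--Sommerville argument for reflexivity and the Gorenstein property.
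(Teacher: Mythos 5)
Your overall strategy is the same as the paper's: produce a squarefree quadratic Gr\"obner basis of the toric ideal whose initial monomials avoid the variable of the origin, then harvest normality, the flag regular unimodular triangulation, Koszulness, reflexivity and the Gorenstein property by standard machinery (the paper packages the last two into Lemma~\ref{genten_yowaku} rather than re-deriving them via Dehn--Sommerville, and it verifies the Gr\"obner property by the ``irreducible binomial outside ${\rm in}(\Gc)$'' criterion rather than by $S$-pairs, but those are routine variations). The routine consequences in your write-up are fine, as is the observation that the origin is interior.

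The genuine gap is exactly where you flag it, and your proposed generating set as stated cannot be repaired merely by tuning the comparison rule. Your two families miss the essential \emph{cancellation} relations: for two \emph{distinct} signed filters $(F,\varepsilon)$, $(G,\sigma)$ sharing a minimal element $p\in F_{\min}\cap G_{\min}$ with $\varepsilon_p\neq\sigma_p$, the product of the images has $t_p^{0}$, and the correct relation (the paper's family (\ref{eorder:ichiban}), transcribed to filters) replaces the pair by the signed filters on $F\setminus\{p\}$ and $G\setminus\{p\}$ --- it strictly shrinks the index sets rather than permuting signs over the same index set. Your ``sign binomials'' only treat $F=G$, and your ``lattice binomials'' $x_{F,\varepsilon}x_{G,\sigma}-x_{F\cap G,\alpha}x_{F\cup G,\beta}$ have no consistent choice of $\alpha,\beta$ in this conflicting-sign case (which sign would $p$ ``carry'' into the meet and join?). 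Consequently products $x_{F,\varepsilon}x_{G,\sigma}$ of this type would be standard monomials for your order even though they are not images of distinct Laurent monomials, so your initial ideal would be too small and the Gr\"obner claim would fail; note also that these cancellation relations are needed even for \emph{comparable} $F\subsetneq G$, a case your lattice binomials do not cover at all. The paper's condition (a) in Theorem~\ref{enrichedorderGB} (requiring equal signs on all shared maximal elements before forming the meet/join relation) together with family (\ref{eorder:ichiban}) and the sign-transfer rules (b), (c) --- which rest on the equivalences of Lemma~\ref{fourconditions} --- is precisely the missing combinatorial content, and without it the ``crux'' you defer is not a verification but the actual theorem.
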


First, in Section~2, we introduce known results on two poset polytopes introduced by Stanley \cite{twoposetpolytopes}, that is,
the order polytope $\Oc_P$ and the chain polytope $\Cc_P$ of a poset $P$.
A squarefree quadratic Gr\"{o}bner basis of the toric ideal of each of $\Oc_P$, $\Cc_P$ and its applications
will be extended to ``enriched case'' in the following sections.
In Section~3, we study the notion of enriched chain polytopes $\Cc_P^{(e)}$ (\cite{OTenriched})
because we need to compare the toric ideals of enriched order polytopes and that of 
enriched chain polytopes in order to prove Theorem~\ref{thm:ehrhart}.
In Section~4, we discuss fundamental properties of enriched order polytopes.
In Section~5, we study the toric ideals of enriched order polytopes and their applications.
%
By proving that the toric ideal of $\Oc^{(e)}_{\overline{P}}$ possesses a squarefree quadratic Gr\"{o}bner basis consisting of binomials whose initial monomials do not contain the variable corresponding to the origin, 
we show Theorem~\ref{thm:flagetc} (Corollary~\ref{maincorollary}).
Moreover, by comparing the initial ideals of toric ideals of two enriched poset polytopes  (Theorem~\ref{lasttheorem}), we will complete the proof of Theorem \ref{thm:ehrhart}.
Note that Theorem~\ref{thm:ehrhart} implies the existence of a bijection between $m \Oc^{(e)}_P \cap \ZZ^n$ and  
$m \Cc^{(e)}_P \cap \ZZ^n$.
In Section~6, towards such a bijection, 
we consider an elementary geometric property,
 the facet representations of enriched order and chain polytopes
(Proposition~\ref{facetsofC}, Theorem~\ref{facetsofO}).
The number of facets is discussed in 
Corollary~\ref{numberoffacets},
and Proposition \ref{upperbound}.
Finally, we show that
$\Oc^{(e)}_P$ is rarely unimodularly equivalent to $\Cc^{(e)}_P$ (Proposition~\ref{rare}).

\subsection*{Acknowledgment}
The authors are grateful to an anonymous referee for his useful comments.
In particular, the last section was added following his advice. 
The authors were partially supported by JSPS KAKENHI 18H01134 and 16J01549.

\section{Two poset polytopes}
In this section, we review properties of order polytopes and  chain polytopes. 
Let ${(P,<_P)}$ be a finite poset on $[n]$.
Recall that the order polytope $\Oc_P \subset \RR^n$ is the convex hull of 
\[
\{\eb_F : F \in \Fc(P) \}.
\]
In \cite{twoposetpolytopes}, Stanley introduced another lattice polytope associated to $P$ as well as the order polytope $\Oc_P$.
An {\em antichain} of $P$ is a subset of $P$ consisting of pairwise incomparable elements of $P$.
Let $\Ac(P)$ denote the set of antichains of $P$.
Note that the empty set $\emptyset$ is an antichain of $P$.
The {\em chain polytope} $\Cc_P \subset \RR^n$ of $P$
is the convex hull of 
\[
\{
\eb_A :
A \in \Ac(P)
\}.
\]
Then $\Cc_P$ is a lattice polytope of dimension $n$.
The order polytope $\Oc_P$ and the chain polytope $\Cc_P$ have similar properties. 

First, we study the Ehrhart polynomials of $\Oc_P$ and $\Cc_P$.
Let $\Pc \subset \RR^n$ be a general lattice polytope of dimension $n$. 
Given a positive integer $m$, we define
\[
L_{\Pc}(m) = |m \Pc \cap \ZZ^n|.
\]
The study on $L_{\Pc}(m)$ originated in Ehrhart \cite{Ehrhart} who proved that $L_{\Pc}(m)$ is a polynomial in $m$ of degree $n$ with the constant term $1$. Moreover, the leading coefficient of $L_{\Pc}(m)$ coincides with the usual Euclidean volume of $\Pc$. 
We say that $L_{\Pc}(m)$ is the \textit{Ehrhart polynomial} of $\Pc$.
An Ehrhart polynomial often coincides with a counting function of a combinatorial object.
A map $f : P \to \ZZ_{\ge 0}$ is called an \textit{order preserving map} if for all  $x,y \in P$ with $x <_P y$, $f$ satisfies $f(x) \leq f(y)$.
Let $\Omega_P(m)$ denote the number of order preserving maps $f : P \to \ZZ_{\ge 0}$ with $f(i) \le m$.
Then $\Omega_P(m)$ is a polynomial in $m$ of degree $n$ and called the \textit{order polynomial} of $P$.
Stanley showed a relation between the Ehrhart polynomials of $\Oc_P$ and $\Cc_P$ and the order polynomial $\Omega_P(m)$.
In fact,
\begin{Proposition}[{\cite[Theorem 4.1]{twoposetpolytopes}}]
\label{twoEhrhartOmega}
	Let $P$ be a finite poset on $[n]$.
	Then one has
	\[
	L_{\Oc_P}(m)=L_{\Cc_P}(m)=\Omega_P(m+1).
	\]
\end{Proposition}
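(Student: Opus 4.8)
I would prove the two equalities separately, the second being the substantive one.

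\textbf{The equality $L_{\Oc_P}(m)=\Omega_P(m+1)$.} This is a direct translation of the definitions. A lattice point $(x_1,\dots,x_n)\in m\Oc_P\cap\ZZ^n$ is exactly a tuple with $0\le x_i\le m$ and $x_i\le x_j$ whenever $i<_P j$, i.e.\ an order-preserving map $f\colon P\to\{0,1,\dots,m\}$ via $f(i)=x_i$, into the chain on $m+1$ elements; counting these is $\Omega_P(m+1)$ by the definition of the order polynomial (cf.\ the discussion in the Introduction identifying $m\Oc_P\cap\ZZ^n$ with the $P$-partitions bounded by $m$). In particular $L_{\Oc_P}$ agrees with a polynomial, which already reproves Ehrhart's theorem in this case.

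\textbf{The equality $L_{\Oc_P}(m)=L_{\Cc_P}(m)$.} The plan is to build Stanley's transfer map and show it is a piecewise-$\mathrm{GL}_n(\ZZ)$ bijection $\phi\colon\Oc_P\to\Cc_P$. For $x=(x_1,\dots,x_n)\in\Oc_P$ set
\[
\phi(x)_i \;=\; x_i-\max\bigl(\{\,x_j : j\lessdot_P i\,\}\cup\{0\}\bigr),
\]
the maximum over the covers $j$ of $i$ in $P$ (so $\phi(x)_i=x_i$ when $i$ is minimal). First I would check $\phi(\Oc_P)\subseteq\Cc_P$: order-preservation of $x$ gives $\phi(x)_i\ge 0$, and for any chain $i_1<_P\cdots<_P i_k$ the inequality $\max\{x_j:j\lessdot_P i_t\}\ge x_{i_{t-1}}$ makes $\sum_t\phi(x)_{i_t}$ telescope to at most $x_{i_k}\le 1$, which is the defining chain inequality of $\Cc_P$. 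Next I would exhibit the inverse $\psi\colon\Cc_P\to\Oc_P$, $\psi(y)_i=\max\{\sum_{j\in C}y_j : C\text{ a chain of }P\text{ with }\max C=i\}$, and verify by the same telescoping that $\psi$ lands in $\Oc_P$ and that $\phi\circ\psi$ and $\psi\circ\phi$ are the identity (for the latter, note that a descending ``greedy'' cover chain below $i$ realizes the maximum). On vertices one sees directly $\phi(\eb_F)=\eb_{\min(F)}$, which reconfirms that filters go to antichains.

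To pass from a bijection of polytopes to a bijection of lattice points of all dilates, I would stratify $\Oc_P$ by the ``selection function'' recording, for each non-minimal $i$, a cover $g(i)\lessdot_P i$ at which the maximum defining $\phi(x)_i$ is attained. On the closure of each stratum $\phi$ is the linear map $x\mapsto (x_i-x_{g(i)})_i$, whose matrix is $I-N$ with $N$ supported on the entries $(i,g(i))$; since $g(i)<_P i$, ordering $[n]$ by a linear extension of $P$ makes $N$ strictly lower triangular, so $I-N$ is unipotent with integer inverse. Hence $\phi$ restricts to a bijection $m\Oc_P\cap\ZZ^n \to m\Cc_P\cap\ZZ^n$ for every positive integer $m$, giving $L_{\Oc_P}(m)=L_{\Cc_P}(m)$. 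The routine part of all this is the two telescoping estimates; the conceptual crux — and the main obstacle — is this last step: one must check that $\phi$ is globally well defined and continuous across the overlapping strata, so that a lattice point lying on several strata is sent to the same lattice point regardless of the chosen selection $g$. This is exactly what the intrinsic ``$\max$'' formula guarantees, and it is what upgrades the polytope bijection to the Ehrhart identity.
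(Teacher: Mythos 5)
Your argument is correct, and it is essentially Stanley's original transfer-map proof of \cite[Theorem 4.1]{twoposetpolytopes}; the paper itself offers no proof of Proposition~\ref{twoEhrhartOmega}, stating it purely as a citation. The closest thing to an internal proof is Proposition~\ref{iso}, which re-derives the equality $L_{\Oc_P}(m)=L_{\Cc_P}(m)$ by an entirely different, algebraic route: the squarefree quadratic Gr\"obner bases of Propositions~\ref{thm:order} and~\ref{thm:chain} give initial ideals of $I_{\Oc_{\overline{P}}}$ and $I_{\Cc_{\overline{P}}}$ that are identified under $x_I\mapsto x_{\max(I)}$, so the two toric rings have the same Hilbert polynomial, which equals the Ehrhart polynomial because the initial ideals are squarefree. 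Your route buys strictly more: the map $\phi$ is a global, homogeneous, piecewise-unimodular bijection, and since both $\phi$ and your inverse $\psi$ visibly preserve $\ZZ^n$, you get an explicit bijection $m\Oc_P\cap\ZZ^n\to m\Cc_P\cap\ZZ^n$ for every $m$ (indeed your unipotent-matrix stratification argument, while fine, is slightly more than needed for this). What the Gr\"obner route buys is transferability: it is the argument that survives in the enriched setting of Theorem~\ref{lasttheorem}, where no transfer map is known and Section~6 is only ``towards such a bijection.'' One small caution on the first equality: with the paper's own definition of $\Omega_P(m)$ (order-preserving maps with $f(i)\le m$, i.e.\ into an $(m+1)$-element chain), your count of $m\Oc_P\cap\ZZ^n$ equals $\Omega_P(m)$, not $\Omega_P(m+1)$; the ``$m+1$'' in the displayed identity presupposes Stanley's convention that $\Omega_P(m)$ counts maps into an $m$-element chain, which is the convention you silently adopt. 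You should state explicitly which normalization you are using, since the paper's Section~2 definition and its displayed formula are off by one from each other.
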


On the other hand, $\Oc_P$ and $\Cc_P$ are not always unimodularly equivalent.
Here, two lattice polytopes $\Pc,\Qc \subset \RR^n$ of dimension $n$ are \textit{unimodularly equivalent} if there exist a unimodular matrix $U \in \ZZ^{n \times n}$ and a lattice point $\wb \in \ZZ^n$ such that $\Qc = f_U(\Pc)+\wb$, where $f_U$ is the linear transformation in $\RR^n$ defined by $U$, i.e., $f_U(\xb)=\xb U$ for all $\xb \in \RR^n$.
In \cite{uniequiv}, Hibi and Li characterized when $\Oc_P$ and $\Cc_P$ are unimodularly equivalent.
In fact,
\begin{Proposition}[{\cite[Corollary 2.3]{uniequiv}}]
\label{Xposet}
	Let $P$ be a finite poset on $[n]$.
Then the following conditions are equivalent{\rm :}
	\begin{enumerate}
		\item[{\rm (i)}] 
The order polytope $\Oc_P$ and the chain polytope $\Cc_P$ are unimodularly equivalent{\rm ;}
		\item[{\rm (ii)}]
The number of the facets of $\Oc_P$ is equal to that of $\Cc_P${\rm ;}
 		\item[{\rm (iii)}] 
The following poset is not a subposet of $P$.	
		\newline
		\begin{center}
	\begin{picture}(200,50)(10,30)

	\put(75,90){\circle*{5}}
	\put(125,90){\circle*{5}}
	\put(75,40){\circle*{5}}
	\put(125,40){\circle*{5}}
	\put(100,65){\circle*{5}}
	\put(75,90){\line(1,-1){50}}
	\put(125,90){\line(-1,-1){50}}
	
	\end{picture}		\end{center}
	\end{enumerate}

\end{Proposition}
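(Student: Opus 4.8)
The plan is to run the cycle (i) $\Rightarrow$ (ii) $\Rightarrow$ (iii) $\Rightarrow$ (i), built on the facet descriptions of the two polytopes. Since transitivity lets one restrict to covering relations, $\Oc_P$ is cut out by the inequalities $x_i\ge 0$ ($i$ a minimal element of $P$), $x_i\le 1$ ($i$ a maximal element of $P$) and $x_i\le x_j$ ($i\lessdot_P j$), and each of these is facet-defining, so $\Oc_P$ has exactly $(\text{number of minimal elements})+(\text{number of maximal elements})+(\text{number of covering relations})$ facets; likewise $\Cc_P$ is cut out by $x_i\ge 0$ ($i\in[n]$) together with $\sum_{i\in C}x_i\le 1$ for the maximal chains $C$ of $P$, each facet-defining, so $\Cc_P$ has exactly $n+(\text{number of maximal chains of }P)$ facets. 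Because unimodular equivalence preserves the number of facets, (i) $\Rightarrow$ (ii) is immediate.

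Write $\mathsf X$ for the five-element poset of (iii) and let $\delta(P)$ be the number of facets of $\Cc_P$ minus that of $\Oc_P$. The combinatorial core is the claim that $\delta(P)\ge 0$, with equality if and only if $\mathsf X$ is not a subposet of $P$; granting it, the two facet counts yield (ii) $\Leftrightarrow$ (iii). I would prove the claim by induction on $n=|P|$. The base case is $P$ an antichain, where $\delta(P)=0$ since both polytopes equal the cube $[0,1]^n$. For the inductive step, choose a non-isolated minimal element $x$ of $P$ (one exists unless $P$ is an antichain), let $Y$ be its set of upper covers, and split $Y=Y_0\sqcup Y_1$, where $Y_0$ consists of those $y\in Y$ that become minimal in $P\setminus\{x\}$. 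A direct accounting of how the five quantities above change gives
\[
\delta(P)-\delta(P\setminus\{x\})=\sum_{y\in Y_1}\bigl(f(y)-1\bigr),
\]
where $f(y)$ is the number of saturated chains running from $y$ up to a maximal element of $P$; since $f(y)\ge 1$, this proves $\delta(P)\ge 0$. For the equality clause, a summand $f(y)-1>0$ occurs at some stage of the peeling precisely when, at that stage, some $y$ has at least two elements strictly below it --- so that the minimal element just removed is incomparable to another element lying below $y$ --- and the set of elements strictly above $y$ is not a chain; such a $y$ is exactly the middle of an induced copy of $\mathsf X$ in $P$. Conversely, an induced $\mathsf X\subseteq P$ forces such a summand (if the middle element of the copy has a single lower cover, replace it by that cover and iterate until it has two). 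Hence $\delta(P)=0$ if and only if $\mathsf X\not\subseteq P$, which is (ii) $\Leftrightarrow$ (iii).

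The implication (iii) $\Rightarrow$ (i) I expect to be the main obstacle. The useful reformulation is that $\mathsf X\not\subseteq P$ means every $a\in P$ has $\{z:z<_P a\}$ a chain or $\{z:z>_P a\}$ a chain. Put $T':=\{a\in P:\{z:z<_P a\}\text{ is not a chain}\}$; then $T'$ is a filter of $P$ in which every element has at most one upper cover, the complementary order ideal $S:=P\setminus T'$ has every element with at most one lower cover, and --- crucially --- every lower cover of an element of $S$ lies in $S$ while every upper cover of an element of $T'$ lies in $T'$. Define the affine map $\Phi\colon\RR^n\to\RR^n$ by
\[
\Phi(x)_i=\begin{cases}
x_i-\max\{x_j:j\lessdot_P i\}&\text{if }i\in S,\\
\min\{x_j:i\lessdot_P j\}-x_i&\text{if }i\in T',
\end{cases}
\]
with conventions $\max\emptyset=0$ and $\min\emptyset=1$. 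By the cover property these extrema involve at most one term, so every coordinate of $\Phi$ is affine; on $S$ the map $\Phi$ agrees with Stanley's transfer map of the induced poset $S$, and on $T'$ with the dual transfer map (built from upper covers, reflecting $\Cc_P=\Cc_{\overline P}$) of the induced poset $T'$, so that the linear part of $\Phi$ is block diagonal along $P=S\sqcup T'$ and triangular with diagonal entries $\pm 1$ in each block; hence $\Phi$ is unimodular. It remains to verify that $\Phi(\Oc_P)=\Cc_P$: a short computation shows that $\Phi$ carries the indicator $\eb_F$ of a filter $F\in\Fc(P)$ to $\eb_{A_F}$, where $A_F:=\bigl(\min(F)\cap S\bigr)\cup\bigl(\max(P\setminus F)\cap T'\bigr)$, and $A_F$ is an antichain of $P$ because $\min(F)$ and $\max(P\setminus F)$ are antichains and no element of one is comparable to an element of the other. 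The hard part --- and the only place where $\mathsf X\not\subseteq P$, rather than the much stronger hypothesis that $P$ is a forest, is genuinely used --- is proving that $F\mapsto A_F$ is a bijection $\Fc(P)\to\Ac(P)$. Once this is established, $\Phi(\Oc_P)$ and $\Cc_P$ are lattice polytopes with the same vertex set, hence equal, so $\Cc_P=f_U(\Oc_P)+\wb$ for the unimodular $U$ and the translation $\wb\in\ZZ^n$ coming from $\Phi$, which closes the cycle.
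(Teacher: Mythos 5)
The paper offers no proof of this proposition: it is imported verbatim from Hibi and Li \cite[Corollary 2.3]{uniequiv}, so there is nothing in the text to compare your argument against; your outline is essentially a reconstruction of the original route (facet counting for (ii) $\Leftrightarrow$ (iii), an explicit piecewise transfer map for (iii) $\Rightarrow$ (i)). One remark first: the step you single out as ``the hard part'' --- that $F \mapsto A_F$ is a bijection $\Fc(P) \to \Ac(P)$ --- is actually immediate and uses no hypothesis on $P$. Your map $\Phi$ is invertible affine, hence injective on $\{\eb_F : F \in \Fc(P)\}$; you have already checked that its image lies in $\{\eb_A : A \in \Ac(P)\}$; and these two sets are equinumerous for every finite poset, since an antichain and the filter it generates determine one another. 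The forbidden-subposet hypothesis enters only where you already invoke it, namely to guarantee that each $i \in S$ has at most one lower cover and each $i \in T'$ at most one upper cover, which is what makes $\Phi$ affine and block-triangular.

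The genuine gap is in the equality case of your peeling induction for (ii) $\Leftrightarrow$ (iii). The identity $\delta(P)-\delta(P\setminus\{x\})=\sum_{y\in Y_1}(f(y)-1)$ is correct (deleting a non-isolated minimal $x$ creates no new covers and no new maximal elements, and the maximal chains through $x$ are counted by $\sum_{y\in Y}f(y)$), and the direction ``no forbidden subposet $\Rightarrow \delta(P)=0$'' goes through: any $y\in Y_1$ has two incomparable elements below it in $P$ (the deleted $x$ and the witness $z$, incomparable because $x\lessdot y$ and $x$ is minimal), so its up-set must be a chain and $f(y)=1$. But your converse, as stated (``such a $y$ is exactly the middle of an induced copy\dots if the middle element has a single lower cover, replace it by that cover and iterate''), does not actually locate a positive summand at a specific stage: the minimal element removed at a given stage need not be a lower cover of your chosen middle element, and the copy may simply persist into $P\setminus\{x\}$. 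The fix is to fold this into the same induction. If some copy of the five-element poset avoids the deleted $x$, then $\delta(P)\ge\delta(P\setminus\{x\})>0$ by the induction hypothesis. Otherwise every copy contains $x$, necessarily as a bottom element; take a copy $\{x,u,y,a,b\}$ with $x,u<y<a,b$ and choose $y'$ with $x\lessdot y'\le y$. If $x$ were the only element of $P$ below $y'$, then $y'\ne y$ (since $u<y$), $y'\not\sim u$, and $\{y',u,y,a,b\}$ would be a copy avoiding $x$, a contradiction; hence $y'\in Y_1$, and $f(y')\ge 2$ because $a,b>y'$, producing the required positive summand. With that repair, and with the bijection closed by the cardinality argument above, your proof is complete.
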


Next, we review the toric ideals of order polytopes and chain polytopes. 
First, we recall basic materials and notation on toric ideals.
Let $K[{\bf t}^{\pm1}, s] = K[t_{1}^{\pm1}, \ldots, t_{n}^{\pm1}, s]$
be the Laurent polynomial ring in $n+1$ variables over a field $K$. 
If $\ab = (a_{1}, \ldots, a_{n}) \in \ZZ^{n}$, then
${\bf t}^{\ab}s$ is the Laurent monomial
$t_{1}^{a_{1}} \cdots t_{n}^{a_{n}}s \in K[{\bf t}^{\pm1}, s]$. 
Let  $\Pc \subset \RR^{n}$ be a lattice polytope and $\Pc \cap \ZZ^n=\{\ab_1,\ldots,\ab_d\}$.
Then, the \textit{toric ring}
of $\Pc$ is the subalgebra $K[\Pc]$ of $K[{\bf t}^{\pm1}, s]$
generated by
$\{\tb^{\ab_1}s ,\ldots,\tb^{\ab_d}s \}$ over $K$.
We regard $K[\Pc]$ as a homogeneous algebra by setting each $\text{deg } \tb^{\ab_i}s=1$.
Let $K[\xb]=K[x_1,\ldots, x_d]$ denote the polynomial ring in $d$ variables over $K$ with each $\deg(x_i)=1$.
The \textit{toric ideal} $I_{\Pc}$ of $\Pc$ is the kernel of the surjective homomorphism $\pi : K[\xb] \rightarrow K[\Pc]$
defined by $\pi(x_i)=\tb^{\ab_i}s$ for $1 \leq i \leq d$.
It is known that $I_{\Pc}$ is generated by homogeneous binomials.
See, e.g., \cite{binomialideals, sturmfels1996}.

Now, we study the toric ideals of $\Oc_{\overline{P}}$ and $\Cc_{\overline{P}}$.
Remark that $\Oc_P$ and $\Oc_{\overline{P}}$ are unimodularly equivalent and $\Cc_P=\Cc_{\overline{P}}$.
A subset $I$ of $P$ is called a {\em poset ideal} of $P$ if $i \in I$ and $j \in P$ together with $i >_P j$ guarantee $j \in I$.
Let $\Jc(P)$ denote the set of poset ideals of $P$, ordered by inclusion.
If $I \in \Jc(P)$ and $J \in \Jc(P)$ are incomparable in $\Jc(P)$, then we write $I \nsim J$.
Then the order polytope $\Oc_{\overline{P}}$ is the convex hull of
\[\{
\eb_I : I \in \Jc(P)
\}.\]
Let $R[\Oc]$ denote the polynomial ring over $K$ in variables $x_I$, where $I \in \Jc(P)$.
In particular, the origin corresponds to the variable $x_{\emptyset}$.
Then the toric ideal $I_{\Oc_{\overline{P}}}$ is the kernel of the ring homomorphism $\pi_{\Oc} : R[\Oc] \to K[t_1,\dots, t_n, s]$ defined by $\pi_{\Oc}(x_I)=s \prod_{i \in I}t_i$.
Let $<_{\Oc}$ be a reverse lexicographic order on $R[\Oc]$ such that $x_I <_{P} x_J$ if $I \subsetneq J$.
In \cite{Hibi1987}, Hibi essentially proved that $I_{\Oc_{\overline{P}}}$ possesses a squarefree quadratic Gr\"{o}bner basis.
In fact,
\begin{Proposition}[{\cite{Hibi1987}}]
	\label{thm:order}
	Work with the same notation as above.
Then
$$
\Gc_{\Oc} = \{ 	x_I x_J - x_{I \cup J}\ x_{I \cap J} : I, J \in \Jc(P) , I \nsim J   \}
$$
is a Gr\"{o}bner basis of $I_{\Oc_{\overline{P}}}$ 
with respect to a reverse lexicographic order $<_{\Oc}$. 
Moreover, $R[\Oc]/I_{\Oc_{\overline{P}}}$ is a normal Cohen-Macaulay domain and Koszul.
\end{Proposition}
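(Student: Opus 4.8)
The plan is to establish Proposition~\ref{thm:order} along the lines of Hibi's original argument: pin down the standard monomials of the proposed binomial set, check that $\pi_{\Oc}$ separates them, and conclude via the usual Gr\"{o}bner-basis criterion. First I would note that $\Jc(P)$ is a distributive lattice under $\cup$ and $\cap$, so for $I,J\in\Jc(P)$ both $I\cup J$ and $I\cap J$ are again poset ideals and $x_{I\cup J}x_{I\cap J}$ is a genuine monomial of $R[\Oc]$. Each binomial $g_{I,J}:=x_Ix_J-x_{I\cup J}x_{I\cap J}$ with $I\nsim J$ then lies in $I_{\Oc_{\overline{P}}}$: for every $i\in[n]$ the multiset $\{I,J\}$ has exactly as many members containing $i$ as the multiset $\{I\cup J,I\cap J\}$ does (namely $2$, $1$, or $0$ according as $i$ lies in both, in one, or in neither of $I,J$), so $\pi_{\Oc}(x_Ix_J)$ and $\pi_{\Oc}(x_{I\cup J}x_{I\cap J})$ are the same monomial $s^2\prod_i t_i^{m_i}$.

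Next I would determine the initial monomial of $g_{I,J}$. Because $I\nsim J$, the four poset ideals $I\cap J\subsetneq I$, $I\cap J\subsetneq J$, $I\subsetneq I\cup J$, $J\subsetneq I\cup J$ are pairwise distinct and $I\cap J$ is properly contained in each of the other three; hence $x_{I\cap J}$ is the $<_{\Oc}$-smallest of the variables $x_I,x_J,x_{I\cup J},x_{I\cap J}$, and it divides $x_{I\cup J}x_{I\cap J}$ but not $x_Ix_J$. By the definition of the reverse lexicographic order this forces $x_Ix_J>_{\Oc}x_{I\cup J}x_{I\cap J}$, so $\mathrm{in}_{<_{\Oc}}(g_{I,J})=x_Ix_J$. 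Consequently $\langle\mathrm{in}_{<_{\Oc}}(\Gc_{\Oc})\rangle=\langle x_Ix_J:I\nsim J\rangle$, and the monomials outside this ideal --- the \emph{standard monomials} --- are exactly the products $x_{I_1}\cdots x_{I_k}$ with $I_1\subseteq\cdots\subseteq I_k$ a multichain in $\Jc(P)$.

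The key step is to show that $\pi_{\Oc}$ is injective on the set $\Mc$ of standard monomials. Writing $\pi_{\Oc}(x_{I_1}\cdots x_{I_k})=s^k\prod_{i=1}^n t_i^{c_i}$ with $c_i=\#\{j:i\in I_j\}$, one recovers $k$ from the exponent of $s$; and since the chain is increasing, each $i$ belongs to the final segment $\{j:j\ge k-c_i+1\}$ of indices, so $I_j=\{i:c_i\ge k-j+1\}$. Thus the entire multichain is recovered from the monomial, so distinct elements of $\Mc$ have distinct images and $\Mc$ is $K$-linearly independent modulo $I_{\Oc_{\overline{P}}}$. Since $\Gc_{\Oc}\subseteq I_{\Oc_{\overline{P}}}$ and $\Mc$ is precisely the set of monomials outside $\langle\mathrm{in}_{<_{\Oc}}(\Gc_{\Oc})\rangle$, the standard criterion --- a subset of an ideal whose standard monomials are $K$-linearly independent modulo the ideal is a Gr\"{o}bner basis of it --- yields $\mathrm{in}_{<_{\Oc}}(I_{\Oc_{\overline{P}}})=\langle x_Ix_J:I\nsim J\rangle$; hence $\Gc_{\Oc}$ is a Gr\"{o}bner basis of $I_{\Oc_{\overline{P}}}$ with respect to $<_{\Oc}$, and $\Mc$ is a $K$-basis of $R[\Oc]/I_{\Oc_{\overline{P}}}$.

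For the remaining assertions: $R[\Oc]/I_{\Oc_{\overline{P}}}\cong K[\Oc_{\overline{P}}]$ is a domain, being a subring of $K[\tb^{\pm1},s]$. Since the Gr\"{o}bner basis just found consists of binomials with squarefree initial monomials, $\mathrm{in}_{<_{\Oc}}(I_{\Oc_{\overline{P}}})$ is squarefree, which makes $K[\Oc_{\overline{P}}]$ normal (equivalently, $\Oc_{\overline{P}}$ admits a regular unimodular triangulation; see, e.g., \cite{sturmfels1996}), and then Cohen-Macaulay by Hochster's theorem on normal affine semigroup rings. Finally, possessing a quadratic Gr\"{o}bner basis, $R[\Oc]/I_{\Oc_{\overline{P}}}$ is Koszul (see \cite{binomialideals}). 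I expect the one genuinely substantive point to be the injectivity of $\pi_{\Oc}$ on the multichain monomials in the third paragraph; identifying the initial terms and handling $I\cup J$, $I\cap J$ are routine once one keeps careful track of the reverse lexicographic convention and of the distributive-lattice structure of $\Jc(P)$.
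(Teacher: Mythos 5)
Your argument is correct. Note, however, that the paper does not prove Proposition~\ref{thm:order} at all: it is quoted from Hibi \cite{Hibi1987} (see also \cite[Chapter~6]{binomialideals}), so there is no in-paper proof to match. Your route --- identify the initial terms via the reverse lexicographic convention, observe that the standard monomials are exactly the multichain monomials $x_{I_1}\cdots x_{I_k}$ with $I_1\subseteq\cdots\subseteq I_k$ in $\Jc(P)$, and show $\pi_{\Oc}$ separates them by reconstructing the multichain from the exponent vector --- is the standard ``count/separate the standard monomials'' proof, and all the steps check out (in particular the recovery $I_j=\{i:c_i\ge k-j+1\}$ is the substantive point, as you say, and it is right). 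It is worth comparing this with the criterion the paper itself uses for the enriched analogue, Theorem~\ref{enrichedorderGB}: there the authors invoke \cite[Theorem~3.11]{binomialideals}, which says that if $\Gc$ were not a Gr\"obner basis there would exist an irreducible binomial $u-v$ in the toric ideal with neither $u$ nor $v$ in ${\rm in}(\Gc)$, and they derive a contradiction by showing two standard monomials with equal image must share a variable. The two criteria are essentially dual formulations of the same fact (injectivity of $\pi$ on standard monomials versus nonexistence of a binomial relation between them); yours has the small additional payoff of exhibiting the multichain monomials explicitly as a $K$-basis of the Hibi ring, while the paper's version avoids having to argue that the standard monomials span. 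The concluding deductions (domain, normality from the squarefree initial ideal via \cite{sturmfels1996}, Cohen--Macaulayness via Hochster, Koszulness from the quadratic Gr\"obner basis) are all standard and correctly invoked.
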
 
Recently, the toric ring $K[\Oc_P] \cong K[\Oc_{\overline{P}}] \cong R[\Oc]/I_{\Oc_{\overline{P}}}$ is called the \textit{Hibi ring} of $P$ and studied by many authors from several viewpoints.
One can find some of them in \cite[Note of Chapter~6]{binomialideals}.

For a poset ideal $I$ of $P$, we denote $\max(I)$ the set of maximal elements of $I$.
Then $\max(I)$ is an antichain of $P$ and every antichain of $P$ is the set of maximal elements of a poset ideal.
On the other hand, for an antichain $A$ of $P$, the poset ideal of $P$ generated by $A$ is
the smallest poset ideal of $P$ which contains $A$. Every poset ideal of $P$ can be obtained by this way.
Hence $\Jc(P)$ and $\Ac(P)$ have a one-to-one correspondence.
Let $R[\Cc]$ denote the polynomial ring over $K$ in variables $x_{\max(I)}$, where $I \in \Jc(P)$.
Then the toric ideal $I_{\Cc_{\overline{P}}}$ is the kernel of the ring homomorphism $\pi_{\Cc} : R[\Cc] \to K[t_1,\dots, t_n, s]$ defined by $\pi_{\Cc}(x_{\max(I)})=s \prod_{i \in \max(I)} t_i$.
Let $<_{\Cc}$ be a reverse lexicographic order on $R[\Cc]$ such that $x_{\max(I)} <_{\Cc} x_{\max(J)}$ if $I \subsetneq J$.
Given poset ideals $I, J \in \Jc(P)$, 
let $I * J$ denote the poset ideal of $P$
generated by $\max(I \cap J) \cap (\max(I) \cup \max(J))$.
Note that $I*J \subset I \cap J$.
The following lemma is fundamental and important.

\begin{Lemma}
\label{fourconditions}
Let $P$ be a finite poset and $I, J \in \Jc(P)$.
For $p \in \max(I) \setminus \max(J)$,
 the following conditions are equivalent{\rm :}
	\begin{enumerate}
		\item[{\rm (i)}] $p \in J${\rm ;}
		\item[{\rm (ii)}]  $p \in \max(I \cap J)${\rm ;}
		\item[{\rm (iii)}]  $p \in \max(I * J)${\rm ;}
		\item[{\rm (iv)}]  $p \notin \max(I \cup J)$.
	\end{enumerate}
\end{Lemma}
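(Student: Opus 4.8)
The plan is to prove the four-way equivalence by establishing the cycle (i)$\Rightarrow$(ii)$\Rightarrow$(iii)$\Rightarrow$(ii)$\Rightarrow$(i) for the first three conditions and then the separate equivalence (i)$\Leftrightarrow$(iv). The workhorse is a single elementary observation: if $p \in \max(I)$ and $p$ belongs to some subset $S$ of $I$, then automatically $p \in \max(S)$, since any $q \in S$ with $q >_P p$ would already contradict the maximality of $p$ in $I$. Because $I * J \subseteq I \cap J \subseteq I$ (the first inclusion being recorded just before the statement), this observation applies with $S = I\cap J$ and $S = I*J$: maximality of $p$ in either of these sets is free as soon as we know $p$ lies in the set. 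So the only real content in the first three conditions is membership.

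With that in hand, (i)$\Leftrightarrow$(ii) is immediate: if $p \in J$ then $p \in I \cap J$, hence $p \in \max(I\cap J)$ by the observation; conversely $\max(I\cap J)\subseteq I\cap J\subseteq J$. For (ii)$\Rightarrow$(iii), note that if $p\in\max(I\cap J)$ then, since also $p\in\max(I)\subseteq\max(I)\cup\max(J)$, the element $p$ lies in the antichain $\max(I\cap J)\cap(\max(I)\cup\max(J))$ that generates $I*J$; thus $p\in I*J$, and then $p\in\max(I*J)$ again by the observation. For (iii)$\Rightarrow$(ii), simply use $p\in\max(I*J)\subseteq I*J\subseteq I\cap J$ and the observation once more. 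This closes the cycle among (i), (ii), (iii) without ever invoking the hypothesis $p\notin\max(J)$ or the down-closedness of the ideals beyond the inclusions among them.

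The equivalence (i)$\Leftrightarrow$(iv) is the one step that genuinely uses that $J$ is a \emph{poset ideal} and not just a subset, so I would treat it separately and with a little care. For (i)$\Rightarrow$(iv): since $p\in J$ but $p\notin\max(J)$, there is $q\in J$ with $q>_P p$; then $q\in I\cup J$ witnesses $p\notin\max(I\cup J)$. For (iv)$\Rightarrow$(i): choose $q\in I\cup J$ with $q>_P p$; maximality of $p$ in $I$ forces $q\notin I$, hence $q\in J$, and since $J$ is a poset ideal containing $q$ with $q>_P p$ we conclude $p\in J$. I do not expect any serious obstacle; the only pitfall is the last implication, where one must remember that $J$ being down-closed is exactly what converts "something above $p$ lies in $J$" into "$p$ lies in $J$".
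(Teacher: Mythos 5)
Your proof is correct and follows essentially the same route as the paper's: both arguments rest on the observations that an element maximal in $I$ is automatically maximal in any subset of $I$ containing it, and that down-closedness of the poset ideal $J$ is what drives the equivalence with (iv). The paper merely chains the implications differently ((ii)$\Rightarrow$(i)$\Rightarrow$(iv)$\Rightarrow$(ii), then (ii)$\Leftrightarrow$(iii) via the identity $\max(I*J)=\max(I\cap J)\cap(\max(I)\cup\max(J))$), which is only a difference of bookkeeping.
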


\begin{proof}
First, (ii) $\Rightarrow$ (i) is trivial.
Suppose $p \in J$.
Since $p$ does not belong to $\max (J)$,
$p$ is not a maximal element in $J$.
Hence we have $p \notin \max(I \cup J)$.
Thus (i) $\Rightarrow$ (iv) holds.
Suppose $p \notin \max(I \cup J)$.
Then there exists an element $q \in I \cup J$ 
such that $p <_P q$.
If $q$ belongs to $I$, then $p \notin \max(I)$, a contradiction.
Thus $q \in J$, and hence $p \in I \cap J$.
If $p \notin \max(I \cap J)$ holds,
then there exists an element $q' \in I \cap J$ 
such that $p <_P q'$.
This contradicts to the hypothesis $p \in \max(I)$.
Thus (iv) $\Rightarrow$ (ii) holds.
Finally, we have (ii) $\Leftrightarrow$ (iii) by
$\max (I*J) = \max(I \cap J) \cap (\max(I) \cup \max(J))$.
\end{proof}
In \cite{HibiLi}, Hibi and Li essentially proved that $I_{\Cc_{\overline{P}}}$ possesses a squarefree quadratic Gr\"{o}bner basis.
In fact,
\begin{Proposition}[{\cite{HibiLi}}]
	\label{thm:chain}
	Work with the same notation as above. 
Then
$$
\Gc_{\Cc} = \{ x_{\max(I)} x_{\max(J)} - x_{\max(I \cup J)} x_{\max(I * J)} : 
I, J \in \Jc(P), I \nsim J \}
$$
is a Gr\"{o}bner basis of $I_{\Cc_{\overline{P}}}$ with respect to a reverse lexicographic order $<_{\Cc}$. Moreover,  $R[\Cc]/I_{\Cc_{\overline{P}}}$ is a normal Cohen-Macaulay domain and Koszul.
\end{Proposition}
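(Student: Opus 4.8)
The plan is to follow the standard route for certifying a Gr\"{o}bner basis of a toric ideal: first check that every binomial in $\Gc_{\Cc}$ lies in $I_{\Cc_{\overline{P}}}$, then determine the initial monomial of each such binomial under $<_{\Cc}$, and finally show that $\pi_{\Cc}$ is injective on the standard monomials of the monomial ideal generated by those initial monomials. The last step is the decisive one: it forces $\operatorname{in}_{<_{\Cc}}(I_{\Cc_{\overline{P}}})$ to coincide with that monomial ideal, which is exactly the assertion that $\Gc_{\Cc}$ is a Gr\"{o}bner basis.

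For membership, fix $I,J\in\Jc(P)$ with $I\nsim J$; the binomial $x_{\max(I)}x_{\max(J)}-x_{\max(I\cup J)}x_{\max(I*J)}$ lies in $I_{\Cc_{\overline{P}}}$ if and only if $\eb_{\max(I)}+\eb_{\max(J)}=\eb_{\max(I\cup J)}+\eb_{\max(I*J)}$, which is a coordinatewise count over $p\in P$. If $p$ lies in both of $\max(I),\max(J)$, or in neither, the identity at $p$ is immediate from the defining formula $\max(I*J)=\max(I\cap J)\cap(\max(I)\cup\max(J))$ together with the elementary fact that a poset ideal is generated precisely by its maximal elements. If $p\in\max(I)\setminus\max(J)$ (and symmetrically if $p\in\max(J)\setminus\max(I)$), then Lemma~\ref{fourconditions} tells us that ``$p\in J$'', ``$p\in\max(I\cap J)$'', ``$p\in\max(I*J)$'', and ``$p\notin\max(I\cup J)$'' are all equivalent, which is exactly what makes the two sides agree at $p$. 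For the initial monomials, note that $I\nsim J$ gives $I*J\subseteq I\cap J\subsetneq I$ and $I*J\subsetneq J$, while $\max(I*J)$ can equal neither $\max(I)$ nor $\max(J)$ (that would force $I*J=I$, resp.\ $I*J=J$). Hence the four variables occurring in the binomial are pairwise distinct, $x_{\max(I*J)}$ is strictly the smallest among them, and it occurs only in the second monomial; so the reverse lexicographic order $<_{\Cc}$ selects $x_{\max(I)}x_{\max(J)}$ as the initial monomial. Consequently $\operatorname{in}_{<_{\Cc}}(I_{\Cc_{\overline{P}}})$ contains $\langle\, x_{\max(I)}x_{\max(J)} : I,J\in\Jc(P),\ I\nsim J\,\rangle$, whose standard monomials are exactly the products $x_{\max(I_1)}\cdots x_{\max(I_k)}$ indexed by multichains $I_1\subseteq\cdots\subseteq I_k$ in $\Jc(P)$.

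It then remains to see that $\pi_{\Cc}$ separates these standard monomials, i.e.\ that a multichain $I_1\subseteq\cdots\subseteq I_k$ can be reconstructed from its length $k$ together with the vector $v:=\sum_{j=1}^{k}\eb_{\max(I_j)}$ (standard monomials of different degrees are never identified, since their images carry different powers of $s$). This is Stanley's transfer map: setting $f(i):=|\{\,j : i\in I_j\,\}|$, one checks the identity $v_i=f(i)-\max\{\,f(i') : i'>_P i\,\}$ with the convention that the maximum over the empty set is $0$; inverting, $f(i)$ is the largest value of $v_{i_0}+v_{i_1}+\cdots+v_{i_r}$ over ascending chains $i=i_0<_P i_1<_P\cdots<_P i_r$ in $P$, and then $I_j=\{\,i : f(i)\ge k+1-j\,\}$ returns the multichain. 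Uniqueness of this reconstruction gives the required injectivity, hence the equality of initial ideals and the Gr\"{o}bner basis assertion. The ``moreover'' part is then formal: the initial ideal is squarefree, so $R[\Cc]/I_{\Cc_{\overline{P}}}$ is normal (and a domain, being a subring of a Laurent polynomial ring); normality makes it Cohen-Macaulay by Hochster's theorem; and the quadratic Gr\"{o}bner basis makes it Koszul by Fr\"{o}berg's criterion.

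The only genuinely non-formal ingredient is the injectivity above: one must make sure that passing from the ordered tuple of antichains $\bigl(\max(I_1),\dots,\max(I_k)\bigr)$ to the single vector $v$ loses no information. The transfer-map identity $v_i=f(i)-\max\{f(i'):i'>_P i\}$ makes this passage reversible and is the heart of the proof; an equivalent bookkeeping is to recover, by downward induction on $P$, the index $j_0(i):=\min\{\,j : i\in I_j\,\}$ from the relation $v_i=\min\bigl(\{\,j_0(i') : i'>_P i\,\}\cup\{k+1\}\bigr)-j_0(i)$.
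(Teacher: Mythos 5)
Your argument is correct. Note, however, that the paper does not prove Proposition~\ref{thm:chain} at all: it is quoted from Hibi--Li \cite{HibiLi}, so there is no in-paper proof to match. What you have written is a complete, self-contained proof, and it follows the same template the paper itself uses for the enriched analogue (Theorem~\ref{enrichedorderGB}): verify membership of the binomials coordinatewise (your use of Lemma~\ref{fourconditions} for $p\in\max(I)\setminus\max(J)$ is exactly the right tool, and the cases $p\in\max(I)\cap\max(J)$ and $p\notin\max(I)\cup\max(J)$ do check out), identify the initial terms via the position of the strictly smallest variable $x_{\max(I*J)}$ under reverse lexicographic order, and then show no nonzero binomial in the ideal has both terms standard. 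The one place you genuinely diverge is in that last step: the paper's proof of Theorem~\ref{enrichedorderGB} invokes the ``irreducible binomial with both monomials standard'' criterion of \cite[Theorem~3.11]{binomialideals} and derives a structural contradiction ($\max(I_r)=\max(J_r)$, so the top variables cancel), whereas you prove injectivity of $\pi_{\Cc}$ on standard monomials by explicitly inverting the assignment $(I_1\subseteq\cdots\subseteq I_k)\mapsto\sum_j\eb_{\max(I_j)}$ via the transfer-map identity $v_i=f(i)-\max\{f(i'):i'>_P i\}$. Your route is more constructive: the explicit inverse is exactly the kind of bijection between multichains (equivalently, lattice points of dilated order polytopes) and sums of indicator vectors of antichains that Section~6 of the paper says one would like to have, so it buys slightly more than the bare Gr\"obner basis statement. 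The ``moreover'' clause is handled correctly by the standard implications (squarefree initial ideal $\Rightarrow$ normal, Hochster $\Rightarrow$ Cohen--Macaulay, quadratic Gr\"obner basis $\Rightarrow$ Koszul).
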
 

From Propositions \ref{thm:order} and \ref{thm:chain} we can prove the following.

\begin{Proposition}
\label{iso}
	Work with the same notation as above.
	Then one has
	\[
	\dfrac{R[\Oc]}{{\rm in}_{<_{\Oc}}(I_{\Oc_{\overline{P}}}) } \cong \dfrac{R[\Cc]}{{\rm in}_{<_{\Cc}}(I_{\Cc_{\overline{P}}})}.
	\]
	Furthermore, we obtain
$
	L_{\Oc_{P}}(m)=L_{\Oc_{\overline{P}}}(m)=L_{\Cc_{\overline{P}}}(m)=L_{\Cc_{P}}(m).
$
\end{Proposition}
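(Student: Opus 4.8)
The plan is to describe both initial ideals explicitly and then read off the isomorphism from the order-theoretic bijection $I\mapsto\max(I)$. First I would record that, since $\Gc_{\Oc}$ is a Gr\"obner basis with respect to $<_{\Oc}$ and the leading monomial of each relation $x_Ix_J-x_{I\cup J}x_{I\cap J}$ is $x_Ix_J$, one has
\[
{\rm in}_{<_{\Oc}}\!\big(I_{\Oc_{\overline{P}}}\big)=\big(\,x_Ix_J \ :\ I,J\in\Jc(P),\ I\nsim J\,\big).
\]
The leading-term claim is immediate from the definition of $<_{\Oc}$: when $I\nsim J$ we have $I\cap J\subsetneq I$ and $I\cap J\subsetneq J$, so $x_{I\cap J}$ is the $<_{\Oc}$-smallest variable occurring in the binomial, it divides $x_{I\cup J}x_{I\cap J}$ but not $x_Ix_J$, and in a reverse lexicographic order this forces $x_Ix_J$ to win. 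The identical reasoning applied to Proposition~\ref{thm:chain}, using $I*J\subseteq I\cap J\subsetneq I$ and $I*J\subseteq I\cap J\subsetneq J$, gives
\[
{\rm in}_{<_{\Cc}}\!\big(I_{\Cc_{\overline{P}}}\big)=\big(\,x_{\max(I)}x_{\max(J)} \ :\ I,J\in\Jc(P),\ I\nsim J\,\big).
\]

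Next I would use that $I\mapsto\max(I)$ is a bijection from $\Jc(P)$ onto $\Ac(P)$, so that sending $x_I\mapsto x_{\max(I)}$ defines an isomorphism of polynomial rings $\phi\colon R[\Oc]\to R[\Cc]$. By the two displays above, $\phi$ carries the generating set $\{x_Ix_J: I\nsim J\}$ of ${\rm in}_{<_{\Oc}}(I_{\Oc_{\overline{P}}})$ bijectively onto the generating set $\{x_{\max(I)}x_{\max(J)}: I\nsim J\}$ of ${\rm in}_{<_{\Cc}}(I_{\Cc_{\overline{P}}})$; hence $\phi$ maps one initial ideal onto the other and descends to the claimed isomorphism of quotient rings.

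For the Ehrhart equalities I would argue through Hilbert functions. Passing to an initial ideal does not change the Hilbert function, so $K[\Oc_{\overline{P}}]$ and $R[\Oc]/{\rm in}_{<_{\Oc}}(I_{\Oc_{\overline{P}}})$ have the same Hilbert function, and likewise $K[\Cc_{\overline{P}}]$ and $R[\Cc]/{\rm in}_{<_{\Cc}}(I_{\Cc_{\overline{P}}})$; combined with the isomorphism just established, $K[\Oc_{\overline{P}}]$ and $K[\Cc_{\overline{P}}]$ have equal Hilbert functions. Since these toric rings are normal (and generated in degree $1$ by all their lattice points) by Propositions~\ref{thm:order} and \ref{thm:chain}, the Hilbert function of $K[\Pc]$ in degree $m$ equals $L_{\Pc}(m)$ for $\Pc\in\{\Oc_{\overline{P}},\Cc_{\overline{P}}\}$, whence $L_{\Oc_{\overline{P}}}(m)=L_{\Cc_{\overline{P}}}(m)$. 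The remaining equalities $L_{\Oc_P}(m)=L_{\Oc_{\overline{P}}}(m)$ and $L_{\Cc_{\overline{P}}}(m)=L_{\Cc_P}(m)$ follow from the unimodular equivalence $\Oc_P\cong\Oc_{\overline{P}}$ and the identity $\Cc_P=\Cc_{\overline{P}}$ recorded in the text.

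The argument is largely bookkeeping; the one point that deserves care is that the leading monomials of the two Gr\"obner bases match up under $x_I\leftrightarrow x_{\max(I)}$ --- that is, that the quadratic relations of the chain polytope were set up (via the operation $I*J$) precisely so that their leading terms are $x_{\max(I)}x_{\max(J)}$, in parallel with $x_Ix_J$ for the order polytope. Once this is observed the rest is immediate, and I do not expect a genuine obstacle.
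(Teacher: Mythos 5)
Your proposal is correct and follows essentially the same route as the paper: write out both initial ideals explicitly from Propositions~\ref{thm:order} and \ref{thm:chain}, transport one onto the other via $x_I \mapsto x_{\max(I)}$, and then pass from equal Hilbert functions to equal Ehrhart polynomials. The only cosmetic difference is in the last step, where the paper justifies ``Hilbert polynomial $=$ Ehrhart polynomial'' via the unimodular triangulations coming from the squarefree initial ideals, while you invoke normality together with degree-one generation by all lattice points; both rest on the same squarefree Gr\"obner bases and are equally valid here.
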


\begin{proof}
From Propositions \ref{thm:order} and \ref{thm:chain}, we have
\begin{eqnarray*} 
	{\rm in}_{<_{\Oc}}(I_{\Oc_{\overline{P}}}) &=& (x_I x_J : I, J \in \Jc(P), I \nsim J),\\
	{\rm in}_{<_{\Cc}}(I_{\Cc_{\overline{P}}}) &=& (x_{\max(I)} x_{\max(J)} : I, J \in \Jc(P), I \nsim J).
\end{eqnarray*}
	Hence it follows that the map $x_{I} \mapsto x_{\max(I)}$ induces an isomorphism from $R[\Oc]/{\rm in}_{<_{\Oc}}(I_{\Oc_{\overline{P}}})$ to $R[\Cc]/{\rm in}_{<_{\Cc}}(I_{\Cc_{\overline{P}}})$.
	Therefore, the first claim follows.
	
Since both ${\rm in}_{<_{\Oc}}(I_{\Oc_{\overline{P}}}) $
and ${\rm in}_{<_{\Cc}}(I_{\Cc_{\overline{P}}})$ are squarefree, 
both $\Oc_{\overline{P}}$ and $\Cc_{\overline{P}}$ possesses a unimodular triangulation,
and hence 
the Ehrhart polynomial coincides with the Hilbert polynomial of its toric ring
for each of $\Oc_{\overline{P}}$ and $\Cc_{\overline{P}}$ (see \cite[Section 4.2]{binomialideals} or \cite[Chapters 8 and 13]{sturmfels1996}).
	Moreover, for an ideal $I$ of $K[\xb]$ and a monomial order $<$ on $K[\xb]$, the Hilbert polynomial of $K[\xb]/I$ is equal to that of $K[\xb]/{\rm in}_{<}(I)$.
	Therefore, the second claim follows.
\end{proof}

\section{Enriched chain polytopes}

In this section, we recall the definition and properties of enriched chain polytopes given in \cite{OTenriched}.
Let $(P,<_P)$ be a finite poset on $[n]$.
The {\em enriched chain polytope} $\Cc^{(e)}_P \subset \RR^n$ of $P$ is the convex hull of 
\[
\{
\eb_A^{\epsilon} 
:
A \in \Ac(P), \epsilon \in \{-1,1\}^{|A|}
\}.
\]
Then $\Cc^{(e)}_{P}$ is a lattice polytope of dimension $n$.
It is easy to see that $\Cc^{(e)}_P$ is centrally symmetric
(i.e., for any facet $\Fc$ of $\Cc^{(e)}_P$, $-\Fc$ is also a facet of $\Cc^{(e)}_P$), and the origin of $\RR^n$ is the unique interior lattice point of $\Cc^{(e)}_P$.
Remark that $\Cc^{(e)}_P=\Cc^{(e)}_{\overline{P}}$.

A lattice polytope $\Pc \subset \RR^n$ of dimension $n$ is called \textit{reflexive} if the origin of $\RR^n$ is a unique lattice point belonging to the interior of $\Pc$ and its dual polytope
\[\Pc^\vee:=\{\yb \in \RR^n  :  \langle \xb,\yb \rangle \leq 1 \ \text{for all}\  \xb \in \Pc \}\]
is also a lattice polytope, where $\langle \xb,\yb \rangle$ is the usual inner product of $\RR^n$.
It is known that reflexive polytopes correspond to Gorenstein toric Fano varieties, and they are related to
mirror symmetry (see, e.g., \cite{mirror,Cox}).
In each dimension there exist only finitely many reflexive polytopes 
up to unimodular equivalence (\cite{Lag})
and all of them are known up to dimension $4$ (\cite{Kre}).
Recently, several classes of reflexive polytopes were constructed by an algebraic technique on Gr\"{o}bner bases (c.f., \cite{HTomega,HTperfect,harmony}).
The algebraic technique is based on
the following lemma that follows from the argument in \cite[Proof of Lemma~1.1]{HMOS}.

\begin{Lemma}
	\label{genten_yowaku}
	Let $\Pc \subset \RR^n$ be a lattice polytope of dimension $n$ such that the origin of $\RR^n$ is contained 
	in its interior.
	Suppose that any lattice point in $\ZZ^n$ is a linear integer combination of the lattice points in $\Pc$.
	If 
	there exists a monomial order such that
	the initial ideal of $I_\Pc$ is generated by squarefree monomials which do not contain
	the variable corresponding to the origin,
	then $\Pc$ is reflexive and has a regular unimodular triangulation.
	Moreover, $K[\Pc]$ is a normal Gorenstein domain.
\end{Lemma}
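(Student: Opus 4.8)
The plan is to extract three facts from the squarefree initial ideal and then read off reflexivity from a Dehn--Sommerville symmetry. Throughout write $\Pc\cap\ZZ^n=\{\ab_0,\ab_1,\dots,\ab_d\}$ with $\ab_0$ the origin, and let $x_0$ be the variable of the origin, so that $K[\Pc]=K[\xb]/I_\Pc$. Let $<$ be the monomial order of the hypothesis and let $\Delta$ be the regular triangulation of $\Pc$ that it induces.

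First I would record the consequences of squarefreeness alone. By Sturmfels' theory, $\operatorname{in}_<(I_\Pc)$ being squarefree is equivalent to $\Delta$ being unimodular, and then $\operatorname{in}_<(I_\Pc)=I_\Delta$ is exactly the Stanley--Reisner ideal of $\Delta$. A unimodular triangulation forces $K[\Pc]$ to be normal, hence Cohen--Macaulay, and makes the Ehrhart series of $\Pc$ equal to the Hilbert series of $K[\Pc]$, whose numerator is the $h^*$-polynomial $h^*_\Pc(t)=\sum_{i=0}^n h^*_i t^i$. Because $K[\Pc]$ and $K[\Delta]$ share the same Hilbert series, $h^*_\Pc$ is the $h$-polynomial of $\Delta$. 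The hypothesis that the lattice points of $\Pc$ generate $\ZZ^n$ as a group enters exactly here: it guarantees $\ZZ\{(\ab_i,1)\}=\ZZ^{n+1}$, so that unimodularity, normality, and the identity $h^*=h(\Delta)$ are all taken with respect to the full lattice, and so that the dual polytope produced below is integral in $\ZZ^n$.

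The heart is the hypothesis $x_0\notin\operatorname{in}_<(I_\Pc)=I_\Delta$. Since $I_\Delta$ is squarefree and no minimal generator involves $x_0$, a set containing $0$ is a face of $\Delta$ iff it remains a face after deleting $0$; equivalently $\Delta=x_0*\Delta'$ is a cone with apex the origin over the induced subcomplex $\Delta'$ on the remaining vertices, and $K[\Delta]\cong K[\Delta'][x_0]$ is a polynomial extension. Geometrically this says the origin lies in every maximal simplex, so $\operatorname{star}_\Delta(0)=\Delta$ and $|\Delta'|=\operatorname{link}_\Delta(0)=\partial\Pc\cong S^{n-1}$; thus $\Delta'$ is a triangulated $(n-1)$-sphere. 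Two things follow. Since the triangulation is unimodular every lattice point of $\Pc$ is a vertex of $\Delta$, and every vertex other than $0$ lies on $\partial\Pc=|\Delta'|$; hence $0$ is the unique interior lattice point. And from $K[\Delta]\cong K[\Delta'][x_0]$ the Hilbert series satisfy $H(K[\Delta],t)=\frac{1}{1-t}H(K[\Delta'],t)$, which together with the previous paragraph gives $h^*_\Pc(t)=h_{\Delta'}(t)$, the $h$-polynomial of the sphere $\Delta'$.

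Finally, the Dehn--Sommerville relations for the triangulated $(n-1)$-sphere $\Delta'$ give $h_i(\Delta')=h_{n-i}(\Delta')$, so $h^*_\Pc$ is palindromic with $h^*_0=h^*_n=1$ and $\deg h^*_\Pc=n$. By Hibi's characterization, a lattice polytope with the origin in its interior whose $h^*$-vector is palindromic is reflexive, so $\Pc$ is reflexive; normality was already shown, $K[\Pc]$ is a domain since it sits inside $K[\tb^{\pm1},s]$, and the palindromic numerator together with Cohen--Macaulayness yields that $K[\Pc]$ is Gorenstein (Stanley's criterion). The required regular unimodular triangulation is the $\Delta$ of the first step. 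I expect the main obstacle to be the middle paragraph: carefully justifying that $x_0\notin I_\Delta$ forces the cone decomposition $\Delta=x_0*\Delta'$ with $|\Delta'|=\partial\Pc$ a genuine sphere triangulation (so that Dehn--Sommerville applies), and tracking that normality, unimodularity, and the resulting duality are all with respect to $\ZZ^n$ rather than a proper sublattice --- which is precisely where the generation hypothesis on the lattice points is indispensable.
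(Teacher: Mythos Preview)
Your argument is correct. Note, however, that the paper does not actually prove this lemma: it merely states the result and attributes it to the argument in \cite[Proof of Lemma~1.1]{HMOS}, so there is no in-paper proof to match against line by line.

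The route taken in \cite{HMOS} and yours share the same opening move --- Sturmfels' theorem gives a regular unimodular triangulation $\Delta$ (whence normality), and the absence of $x_0$ from the generators of the initial ideal forces the origin into every maximal simplex, so $\Delta$ is the cone from ${\bf 0}$ over a triangulation $\Delta'$ of $\partial\Pc$ --- but they diverge at the last step. The standard argument reads off reflexivity geometrically: each facet $F$ of $\Pc$ supports some unimodular $(n-1)$-simplex $\tau$ of $\Delta'$, and since $\mathrm{conv}({\bf 0},\tau)$ is a unimodular $n$-simplex (here the lattice-generation hypothesis is used), the primitive outer normal of $F$ takes the value $1$ on $F$; hence $\Pc^\vee$ is a lattice polytope and $\Pc$ is reflexive by definition, with Gorensteinness following from reflexive plus normal. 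You instead pass through the $h$-vector: $h^*_\Pc = h_{\Delta'}$ is palindromic by Dehn--Sommerville for the sphere $\Delta'$, and you then invoke Hibi's $h^*$-characterization of reflexivity together with Stanley's Gorenstein criterion. This is a legitimate alternative, but it trades a short lattice-distance computation for two substantial cited theorems; the direct argument is more self-contained, while yours makes the combinatorics of the boundary sphere more visible.
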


In order to use Lemma \ref{genten_yowaku} for enriched chain polytopes $\Cc^{(e)}_{\overline{P}}$, we study
the toric ideal of $\Cc^{(e)}_{\overline{P}}$. 
Let $R[\Cc^{(e)}]$ denote the polynomial ring over $K$ in variables $x_{A}^{\epsilon}$, where $A \in \Ac(P)$
and $\epsilon=(\epsilon_1,\ldots,\epsilon_n) \in \{-1,0,1\}^{n}$ with
\[
|\epsilon_i|=
\begin{cases}
1  & (i \in  A);\\
0 & (i \notin A).
\end{cases}
\]
Then the toric ideal $I_{\Cc^{(e)}_{\overline{P}}}$ is the kernel of a ring homomorphism 
$\pi_{\Cc^{(e)}}: R[\Cc^{(e)}] \rightarrow K[\tb^{\pm1}, s]$
defined by
$\pi_{\Cc^{(e)}}(x_{A}^{\epsilon}) = t_{1}^{\varepsilon_1}
\dots t_{n}^{\varepsilon_n} s$.
In addition, 
$$
I_{\Cc^{(e)}_{\overline{P}}} \cap K[x_A^{\varepsilon} :  
A \in \Ac(P), \varepsilon \in \{0,1\}^n]$$
is the toric ideal $I_{\Cc_{\overline{P}}}$.
For $\varepsilon=(\varepsilon_1,\ldots,\varepsilon_n) \in \{-1,0,1\}^n$, we write $\varepsilon^{+}:=(|\varepsilon_1|,\ldots,|\varepsilon_n|) \in \{0,1\}^n$.
We identify the variable $x^{\varepsilon^{+}}_A$ on $R[\Cc^{(e)}]$ with the variable $x_A$ on $R[\Cc]$.
It is known \cite[Proposition~1.11]{sturmfels1996} that there exists a nonnegative weight vector
$\wb_{\Cc} \in \RR^{|\Jc(P)|}$ such that ${\rm in}_{\wb_{\Cc}} (I_{\Cc_{\overline{P}}}) = {\rm in}_{<_{\Cc}}(I_{\Cc_{\overline{P}}})$.
Then we define the weight vector $\wb_{\Cc^{(e)}}$ on $R[\Cc^{(e)}]$ such that
the weight of each variable $x_A^\varepsilon$
with respect to $\wb_{\Cc^{(e)}}$ is the weight of the variable $x_A^{\epsilon^+}$ with respect to $\wb_{\Cc}$.
In addition, let $\wb_{\sharp}$ be
the weight vector on $R[\Cc^{(e)}]$ such that
the weight of each variable $x_A^\varepsilon$
with respect to $\wb_{\sharp}$ is $|A|$.
Fix any monomial order $\prec$ on $K[\Cc^{(e)}]$ as a tie-breaker.
Let $<_{\Cc^{(e)}}$ be a monomial order on $R[\Cc^{(e)}]$ such that $u <_{\Cc^{(e)}} v$
if and only if one of the following holds:
\begin{itemize}
	\item
	The weight of $u$ is less than that of $v$ with respect to $\wb_\sharp$;
	\item
	The weight of $u$ is the same as that of $v$ with respect to $\wb_\sharp$,
	and the weight of $u$ is less than that of $v$ with respect to $\wb_{\Cc^{(e)}}$;
	\item
	The weight of $u$ is the same as that of $v$ with respect to $\wb_\sharp$
	and $\wb_{\Cc^{(e)}}$, and $u \prec v$.
\end{itemize}
The following proposition was given in \cite[Theorem~1.3]{OTenriched}:

\begin{Proposition}[{\cite{OTenriched}}]
	\label{thm:grobner}
	Work with the same notation as above.
	Let $\Gc_{\Cc^{(e)}}$ be the set of all binomials
	\begin{equation*}
	\label{saisyo}
	x_{\max(I)}^{(\varepsilon_1,\ldots,\varepsilon_n)} x_{\max(J)}^{(\mu_1,\ldots,\mu_n)}
	-
	x_{\max(I) \setminus \{p\}}^{(\varepsilon_1,\ldots,\varepsilon_{p-1},0,\varepsilon_{p+1},\ldots,\varepsilon_{n})} x_{\max(J) \setminus \{p\}}^{(\mu_1,\ldots,\mu_{p-1},0,\mu_{p+1},\ldots,\mu_n)},
	\end{equation*}
	where $I, J \in \Jc(P)$, $\epsilon_p\neq \mu_p$, and $p \in \max(I) \cap \max(J)$,
together with all binomials
	\begin{equation*}
	\label{niban}
	x_{\max(I)}^{(\varepsilon_1,\dots, \varepsilon_n)} x_{\max(J)}^{(\varepsilon'_{1},\dots, \varepsilon'_{n})}
	-
	x_{\max (I \cup J)}^{(\mu_1,\dots, \mu_{n})} 
	x_{\max (I*J)}^{(\mu'_{1},\dots, \mu'_{n})},
	\end{equation*}
	where $I, J \in \Jc(P)$ with $I \nsim J$ and 
	\begin{itemize}
		\item[{\rm (a)}]
		For any $p \in \max(I) \cap \max(J)$,
		we have $\varepsilon_p =\varepsilon'_p =\mu_p=\mu'_p ${\rm ;}

\smallskip

		\item[{\rm (b)}]
		For any $p \in \max(I) \setminus \max (J)$, we have
$
\varepsilon_p
 = 
\left\{
\begin{array}{cc}
\mu_p & \mbox{if } p \in \max(I \cup J),\\
\mu_p' &  \mbox{if } p \in \max(I*J){\rm ;}
\end{array}
\right.
$

\smallskip

		\item[{\rm (c)}]
		For any $p \in \max(J) \setminus \max (I)$, we have
$
\varepsilon_p'
 = 
\left\{
\begin{array}{cc}
\mu_p & \mbox{if } p \in \max(I \cup J),\\
\mu_p' &  \mbox{if } p \in \max(I*J).
\end{array}
\right.
$
	\end{itemize}
	Then $\Gc_{\Cc^{(e)}}$ is a Gr\"obner basis of $I_{\Cc^{(e)}_{\overline{P}}}$ with respect to a monomial order $<_{\Cc^{(e)}}$.
	The initial monomial of each binomial is the first monomial.
	In particular, the initial ideal is generated by squarefree quadratic monomials
	which do not contain the variable $x_\emptyset^{\bf 0}$.
\end{Proposition}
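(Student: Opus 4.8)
The plan is to establish the two halves of the Gr\"obner basis assertion: first, that every binomial in $\Gc_{\Cc^{(e)}}$ lies in $I_{\Cc^{(e)}_{\overline P}}$ and has the claimed initial monomial; second, that the monomials of $R[\Cc^{(e)}]$ avoiding the resulting monomial ideal are linearly independent modulo $I_{\Cc^{(e)}_{\overline P}}$. The second half will be reduced to the chain polytope case, Proposition~\ref{thm:chain}.

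\textbf{Membership and initial monomials.} I would apply $\pi_{\Cc^{(e)}}$ to each binomial. For a first-type binomial, the two monomials map to $\tb^{\varepsilon+\mu}s^2$ and to the same product with the $p$-th coordinate deleted from each exponent vector; since $p\in\max(I)\cap\max(J)$ forces $\varepsilon_p,\mu_p\in\{1,-1\}$ with $\varepsilon_p\neq\mu_p$, one has $\varepsilon_p+\mu_p=0$, so the two images agree. For a second-type binomial I would check coordinate by coordinate, splitting $\max(I)\setminus\max(J)$ and $\max(J)\setminus\max(I)$ according to Lemma~\ref{fourconditions} (each such element lies in exactly one of $\max(I\cup J)$, $\max(I*J)$), that conditions (a)--(c) are precisely what forces $\varepsilon+\varepsilon'=\mu+\mu'$ in $\ZZ^n$; combined with the support identity $\eb_{\max(I)}+\eb_{\max(J)}=\eb_{\max(I\cup J)}+\eb_{\max(I*J)}$ underlying $\Gc_{\Cc}$, this yields $\pi_{\Cc^{(e)}}(\text{first monomial})=\pi_{\Cc^{(e)}}(\text{second monomial})$. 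For the initial term: in a first-type binomial the first monomial has $\wb_\sharp$-weight $|\max(I)|+|\max(J)|$, which exceeds that of the second by $2$, so it already wins the first comparison. In a second-type binomial the $\wb_\sharp$-weights of the two monomials coincide (sum the coordinates of the support identity), so the comparison passes to $\wb_{\Cc^{(e)}}$, where by construction they equal the $\wb_{\Cc}$-weights of $x_{\max(I)}x_{\max(J)}$ and $x_{\max(I\cup J)}x_{\max(I*J)}$; by the choice of $\wb_{\Cc}$ the former is strictly larger, since $x_{\max(I)}x_{\max(J)}$ is the $<_{\Cc}$-initial monomial of the corresponding binomial of $\Gc_{\Cc}$ (Propositions~\ref{thm:chain} and~\ref{iso}).

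\textbf{Standard monomials and injectivity.} From the explicit initial monomials one reads off that a monomial $\prod_{l=1}^{k}x_{\max(I_l)}^{\varepsilon^{(l)}}$ avoids the monomial ideal ${\rm in}(\Gc_{\Cc^{(e)}})$ generated by these initial monomials exactly when the $I_l$ are pairwise comparable --- hence form a multichain $I_1\subseteq\cdots\subseteq I_k$ --- and the sign vectors agree on overlapping maxima, i.e.\ $\varepsilon^{(a)}_i=\varepsilon^{(b)}_i$ whenever $i\in\max(I_a)\cap\max(I_b)$. I claim $\pi_{\Cc^{(e)}}$ is injective on such monomials. Writing $\pi_{\Cc^{(e)}}\big(\prod_l x_{\max(I_l)}^{\varepsilon^{(l)}}\big)=s^k\prod_i t_i^{c_i}$ with $c_i=\sum_l\varepsilon^{(l)}_i$, the overlap condition forces, for each fixed $i$, all the nonzero $\varepsilon^{(l)}_i$ to share one sign $\sigma_i$, so that $c_i=\sigma_i\,d_i$ with $d_i=\#\{l:i\in\max(I_l)\}$. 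Hence $d_i=|c_i|$ and, when $d_i>0$, $\sigma_i=\operatorname{sgn}(c_i)$, are recovered from the image. But $s^k\prod_i t_i^{d_i}=\pi_{\Cc}(x_{\max(I_1)}\cdots x_{\max(I_k)})$, a standard monomial for the Gr\"obner basis $\Gc_{\Cc}$, which by Proposition~\ref{thm:chain} therefore determines the multichain $I_1\subseteq\cdots\subseteq I_k$; finally $\varepsilon^{(l)}_i=\sigma_i$ for $i\in\max(I_l)$ and $\varepsilon^{(l)}_i=0$ otherwise. So distinct standard monomials have distinct images.

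\textbf{Conclusion.} Since the binomials of $\Gc_{\Cc^{(e)}}$ lie in $I_{\Cc^{(e)}_{\overline P}}$ by Step~1, rewriting by $\Gc_{\Cc^{(e)}}$ --- which terminates because $<_{\Cc^{(e)}}$ is a monomial order --- shows that the standard monomials span $R[\Cc^{(e)}]/I_{\Cc^{(e)}_{\overline P}}$ as a $K$-vector space; by Step~2 they are also linearly independent there, hence form a $K$-basis, so ${\rm in}(\Gc_{\Cc^{(e)}})={\rm in}_{<_{\Cc^{(e)}}}(I_{\Cc^{(e)}_{\overline P}})$ and $\Gc_{\Cc^{(e)}}$ is a Gr\"obner basis. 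Every initial monomial is squarefree of degree $2$; and $x_\emptyset^{\bf 0}$, the variable for $I=\emptyset$, occurs in none of them, since $\emptyset$ is comparable to every poset ideal and $\max(\emptyset)\cap\max(J)=\emptyset$, so no relation of either family involves it --- giving the final assertion. I expect the conceptual crux to be the observation in Step~2 that sign-consistency on standard monomials makes $|c_i|$ recover $d_i$, which is what lets the chain polytope case do the work; the most error-prone point is the coordinatewise check in Step~1 that (a)--(c) give $\varepsilon+\varepsilon'=\mu+\mu'$, where Lemma~\ref{fourconditions} is indispensable.
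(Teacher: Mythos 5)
Your argument is correct. A caveat on the comparison: the paper gives no proof of Proposition~\ref{thm:grobner} at all --- it is imported verbatim from \cite[Theorem~1.3]{OTenriched} --- so the only internal benchmark is the proof of the parallel statement for enriched order polytopes, Theorem~\ref{enrichedorderGB}. That proof runs essentially the same computation in the contrapositive direction: it invokes \cite[Theorem~3.11]{binomialideals} to extract an irreducible binomial $f=u-v\in I$ with neither monomial divisible by an initial monomial of $\Gc$, observes that $u$ and $v$ must then be supported on multichains with sign-consistency on overlapping maxima (your ``standard monomials''), and derives a contradiction by showing the top factors of $u$ and $v$ coincide. Your route --- standard monomials span $R[\Cc^{(e)}]/I_{\Cc^{(e)}_{\overline{P}}}$ by division, and are separated by $\pi_{\Cc^{(e)}}$ because sign-consistency lets $|c_i|$ recover the multiplicity $d_i$ and reduces injectivity to the unsigned chain-polytope case (Proposition~\ref{thm:chain}) --- is the Macaulay-basis formulation of the same idea; what it buys is that the ``strip off the top factor'' step is replaced by a single appeal to the known result for $\Gc_{\Cc}$. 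Two points you treat quickly but correctly and which should stay explicit in a full write-up: (i) the verification via Lemma~\ref{fourconditions} that each $p\in\max(I)\setminus\max(J)$ lies in exactly one of $\max(I\cup J)$ and $\max(I*J)$, which is what makes conditions (a)--(c) simultaneously consistent (so that $\mu,\mu'$ are legitimate sign vectors) and sufficient for $\varepsilon+\varepsilon'=\mu+\mu'$; and (ii) in the $\wb_{\Cc}$-comparison, the strictness of the weight inequality follows because the trailing monomial $x_{\max(I\cup J)}x_{\max(I*J)}$ is standard for $<_{\Cc}$ (as $I*J\subseteq I\cup J$ are comparable), so ${\rm in}_{\wb_{\Cc}}$ of the corresponding binomial of $\Gc_{\Cc}$ can be neither that monomial nor the full binomial.
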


By Lemma~\ref{genten_yowaku} and Proposition~\ref{thm:grobner},
we have the following immediately.

\begin{Corollary}[\cite{OTenriched}]
	Let $P$ be a finite poset on $[n]$.
	Then $\Cc^{(e)}_P$ is a reflexive polytope with a flag regular unimodular triangulation.
	Moreover, $K[\Cc^{(e)}_P]$ is a normal Gorenstein domain and Koszul.
\end{Corollary}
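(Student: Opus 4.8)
The plan is to derive every assertion from Lemma~\ref{genten_yowaku}, applied to the polytope $\Cc^{(e)}_{\overline{P}}$, with the hypothesis on the initial ideal furnished by Proposition~\ref{thm:grobner}. Since $\Cc^{(e)}_P = \Cc^{(e)}_{\overline{P}}$, it suffices to prove the statement for $\Cc^{(e)}_{\overline{P}}$, so I would first check the remaining hypotheses of Lemma~\ref{genten_yowaku}. That $\Cc^{(e)}_{\overline{P}} \subset \RR^n$ is an $n$-dimensional lattice polytope with the origin in its interior was already recorded at the start of this section. For the lattice-span condition, note that for each $i \in [n]$ the singleton $\{i\}$ is an antichain of $P$, so both $\eb_i = \eb_{\{i\}}^{\varepsilon}$ with $\varepsilon=(1)$ and $-\eb_i = \eb_{\{i\}}^{\varepsilon}$ with $\varepsilon=(-1)$ lie in $\Cc^{(e)}_{\overline{P}} \cap \ZZ^n$; hence these lattice points alone generate $\ZZ^n$ as a $\ZZ$-module.

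Next, Proposition~\ref{thm:grobner} provides a monomial order $<_{\Cc^{(e)}}$ for which ${\rm in}_{<_{\Cc^{(e)}}}(I_{\Cc^{(e)}_{\overline{P}}})$ is generated by squarefree quadratic monomials, none of which involves the variable $x_\emptyset^{\bf 0}$ attached to the origin. Lemma~\ref{genten_yowaku} then yields at once that $\Cc^{(e)}_{\overline{P}}$ is reflexive, that it possesses a regular unimodular triangulation, and that $K[\Cc^{(e)}_{\overline{P}}]$ is a normal Gorenstein domain. This triangulation is moreover flag: its Stanley--Reisner ideal is the radical of the initial ideal, which coincides with the initial ideal itself since the latter is squarefree, and so is generated in degree $2$ --- equivalently, every minimal nonface has cardinality $2$. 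Finally, $K[\Cc^{(e)}_{\overline{P}}]$ is Koszul because $I_{\Cc^{(e)}_{\overline{P}}}$ admits a quadratic Gr\"obner basis, and any toric ring whose toric ideal has a quadratic Gr\"obner basis is Koszul. Translating these conclusions back through $\Cc^{(e)}_P = \Cc^{(e)}_{\overline{P}}$ gives the Corollary.

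There is essentially no obstacle at this stage: all of the combinatorial content --- the precise description of the binomials in $\Gc_{\Cc^{(e)}}$, the choice of the composite weight order $<_{\Cc^{(e)}}$ built from $\wb_\sharp$ and $\wb_{\Cc^{(e)}}$, and the verification that $\Gc_{\Cc^{(e)}}$ is a Gr\"obner basis with the stated initial monomials --- is already packaged inside Proposition~\ref{thm:grobner}. What remains is the routine passage through Lemma~\ref{genten_yowaku}, together with the two standard facts recalled above: a squarefree degree-two initial ideal forces a flag unimodular triangulation, and a quadratic Gr\"obner basis forces Koszulness.
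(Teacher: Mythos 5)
Your proposal is correct and follows exactly the route the paper takes: the paper derives this Corollary immediately from Lemma~\ref{genten_yowaku} combined with Proposition~\ref{thm:grobner}, and your write-up merely makes explicit the routine verifications (the lattice-span hypothesis via the singleton antichains, flagness from the squarefree quadratic initial ideal, and Koszulness from the quadratic Gr\"obner basis) that the paper leaves implicit.
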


Next, we study Ehrhart polynomials of enriched chain polytopes.
Assume that $P$ is naturally labeled.
Let $\Omega^{(\ell)}_P(m)$ denote the number of left enriched $P$-partitions $f : P \to \ZZ$
with $|f(i)| \le m$.
Then $\Omega^{(\ell)}_P(m)$ is a polynomial in $m$ of degree $n$ and called the \textit{left enriched order polynomial} of $P$.

\begin{Proposition}[{\cite[Theorem~0.2]{OTenriched}}]
\label{enrichedchainpolytope:enrichedleft}
	Let $P$ be a naturally labeled finite poset on $[n]$.
	Then one has 
	\[
	L_{\Cc^{(e)}_P}(m) = \Omega_P^{(\ell)} (m).
	\]
\end{Proposition}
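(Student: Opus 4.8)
The plan is to reduce the identity to two classical facts about the ordinary poset polytopes of Stanley \cite{twoposetpolytopes} — the inequality (facet) description $\Cc_P=\{x\in\RR^n_{\ge 0}:\sum_{i\in C}x_i\le 1\ \text{for every chain}\ C\ \text{of}\ P\}$, and the transfer-map bijection that underlies Proposition~\ref{twoEhrhartOmega} — and then to perform one and the same sign count on the polytope side and on the partition side.

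First I would pin down the lattice points of the dilated enriched chain polytope: for every positive integer $m$,
\[
m\Cc^{(e)}_P\cap\ZZ^n=\bigl\{g\in\ZZ^n:\ (|g_1|,\dots,|g_n|)\in m\Cc_P\bigr\}.
\]
For "$\subseteq$": for a fixed chain $C$ and a fixed sign vector $\delta\in\{-1,1\}^{C}$, the linear functional $x\mapsto\sum_{i\in C}\delta_i x_i$ is $\le m$ at every point $m\eb_A^{\varepsilon}$ ($A\in\Ac(P)$) generating $m\Cc^{(e)}_P$, because $|C\cap A|\le 1$ for an antichain $A$, hence it is $\le m$ on all of $m\Cc^{(e)}_P$; optimizing over $\delta$ gives $\sum_{i\in C}|g_i|\le m$, and Stanley's inequality description then yields $(|g_1|,\dots,|g_n|)\in m\Cc_P$. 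For "$\supseteq$": write $(|g_1|,\dots,|g_n|)$ as a convex combination of the points $m\eb_A$ ($A\in\Ac(P)$) generating $m\Cc_P$ — each such $A$ lies in $\operatorname{supp}(g)$ — and multiply coordinatewise by any extension of the sign pattern of $g$ to a vector in $\{-1,1\}^n$; this realizes $g$ as a convex combination of the points $m\eb_A^{\varepsilon}$ generating $m\Cc^{(e)}_P$. Since for $y\in m\Cc_P\cap\ZZ^n$ there are exactly $2^{|\operatorname{supp}(y)|}$ integer vectors whose coordinatewise absolute value is $y$, this gives
\[
L_{\Cc^{(e)}_P}(m)=\sum_{y\in m\Cc_P\cap\ZZ^n}2^{|\operatorname{supp}(y)|}.
\]

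Next I would do the analogous count for left enriched $P$-partitions. If $f$ is a left enriched $P$-partition with $|f(i)|\le m$, then $h:=(|f(1)|,\dots,|f(n)|)$ is order preserving and bounded by $m$, so $h\in m\Oc_P\cap\ZZ^n$. Conversely, for a fixed $h\in m\Oc_P\cap\ZZ^n$ I would determine which sign patterns yield a left enriched $P$-partition $f$ with $|f(i)|=h(i)$: condition (ii) forces $f(x)=h(x)$ whenever $h(x)>0$ and $h(x)=h(y)$ for some $y<_P x$ (equivalently, whenever $h(x)>0$ and $h(x)=\max\{h(y):y\lessdot x\}$), and it leaves the sign of $f(x)$ completely free exactly when $h(x)>\max\{h(y):y\lessdot x\}$ (with the convention $\max\emptyset:=0$); writing $\operatorname{Asc}(h)$ for this last set of $x$, we get exactly $2^{|\operatorname{Asc}(h)|}$ admissible $f$ over a given $h$, hence
\[
\Omega_P^{(\ell)}(m)=\sum_{h\in m\Oc_P\cap\ZZ^n}2^{|\operatorname{Asc}(h)|}.
\]

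Finally I would match the two sums through Stanley's transfer map, which is a bijection $m\Cc_P\cap\ZZ^n\to m\Oc_P\cap\ZZ^n$ whose inverse sends an order-preserving $h$ to the function $x\mapsto h(x)-\max\{h(y):y\lessdot x\}$ (again $\max\emptyset:=0$). Thus if $y$ corresponds to $h$, then $y(x)>0$ iff $x\in\operatorname{Asc}(h)$, i.e.\ $\operatorname{supp}(y)=\operatorname{Asc}(h)$; combining the three displays gives $L_{\Cc^{(e)}_P}(m)=\Omega_P^{(\ell)}(m)$ for all positive integers $m$, and since both sides are polynomials in $m$, this is an equality of polynomials. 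The argument has no deep obstacle once Stanley's two results are granted; the points to handle carefully are the two-sided check in the first step — in particular that the "$\subseteq$" inclusion genuinely needs the facet description of $\Cc_P$ and not just its vertices — and the use of the asymmetry of condition (ii) (it constrains the larger element, never the smaller) to identify precisely the sign-free coordinates of a left enriched $P$-partition, so that both counts produce the same power $2^{|\operatorname{Asc}(h)|}$.
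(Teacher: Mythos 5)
Your argument is correct and complete. Note that the paper itself does not prove this proposition at all: it is imported verbatim from \cite[Theorem~0.2]{OTenriched}, so there is no in-paper proof to compare against. Your route is essentially the standard one used in that source: the identification $m\Cc^{(e)}_P\cap\ZZ^n=\{g\in\ZZ^n:(|g_1|,\dots,|g_n|)\in m\Cc_P\}$ (which is \cite[Lemma~1.1]{OTenriched}, the same fact the present paper invokes to get Proposition~\ref{facetsofC}), the resulting count $\sum_{y}2^{|\operatorname{supp}(y)|}$ over $m\Cc_P\cap\ZZ^n$, the matching count $\sum_{h}2^{|\operatorname{Asc}(h)|}$ over $m\Oc_P\cap\ZZ^n$ coming from the one-sided nature of condition (ii), and Stanley's transfer map to equate the two sums. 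The two points you flag as delicate are exactly the right ones, and you handle both correctly: the inclusion $m\Cc^{(e)}_P\cap\ZZ^n\subseteq\{g:(|g_i|)\in m\Cc_P\}$ really does require the chain-inequality description of $\Cc_P$ (the vertex description alone does not rule out extra lattice points), and the asymmetry of (ii) is what makes the admissible sign choices independent across coordinates, so that the fiber over $h$ has size exactly $2^{|\operatorname{Asc}(h)|}$ with $\operatorname{Asc}(h)=\operatorname{supp}$ of the transferred point.
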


\section{Fundamental properties of enriched order polytopes}
In this section, we discuss some fundamental properties of enriched order polytopes.
First, we consider the set of lattice points in enriched order polytopes.
\begin{Lemma}
	\label{lem:latticepoint}
	Let $P$ be a finite poset on $[n]$. Then one has
	\[\Oc^{(e)}_P \cap \ZZ^n=
	\{
	\eb_{F_{\min}}^{\epsilon} + \eb_{F_{\comin}}
	:
	F \in \Fc(P), \epsilon \in \{-1,1\}^{|F_{\min}|}
	\}.
	\] 
In addition, the origin is the unique interior lattice point in $\Oc^{(e)}_P$.
\end{Lemma}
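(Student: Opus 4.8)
Write $S$ for the set on the right-hand side of the claimed identity. The inclusion $S\subseteq\Oc^{(e)}_P\cap\ZZ^n$ is immediate from the definition of $\Oc^{(e)}_P$, and in particular the origin (coming from the empty filter) is a lattice point of $\Oc^{(e)}_P$; the work is the reverse inclusion, for which the plan is to isolate a small system of valid linear inequalities for $\Oc^{(e)}_P$ that already forces every lattice point into $S$. First, every generator $\eb^{\varepsilon}_{F_{\min}}+\eb_{F_{\comin}}$ has coordinates in $\{-1,0,1\}$, so $\Oc^{(e)}_P\subseteq[-1,1]^n$ and $\Oc^{(e)}_P\cap\ZZ^n\subseteq\{-1,0,1\}^n$. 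Second, for each pair $i<_P j$ I would check that the two inequalities $2x_i-x_j\le 1$ and $-2x_i-x_j\le 1$ are valid on $\Oc^{(e)}_P$: it suffices to test them on a generator $\wb=\eb^{\varepsilon}_{F_{\min}}+\eb_{F_{\comin}}$, and if $i\notin F$ then $w_i=0$ and both reduce to $w_j\ge -1$, while if $i\in F$ then $F$ being a filter forces $j\in F$, and $j\notin\min(F)$ since $i<_P j$ and $i\in F$, hence $w_j=1$ and $2|w_i|-w_j=2-1=1$.

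Granting these inequalities, the reverse inclusion becomes bookkeeping. Take $\vb\in\Oc^{(e)}_P\cap\ZZ^n$, so $\vb\in\{-1,0,1\}^n$ and $\vb$ satisfies all the above inequalities, and set $F:=\{i:v_i\ne 0\}$. If $i\in F$ and $i<_P j$, then $|v_i|=1$ gives $2-v_j\le 1$, so $v_j=1$; in particular $j\in F$, so $F$ is a filter, every $j\in F_{\comin}=F\setminus\min(F)$ has $v_j=1$, and the coordinates of $\vb$ on $F_{\min}$ form an arbitrary sign vector. Reading off that sign vector as $\varepsilon$ yields $\vb=\eb^{\varepsilon}_{F_{\min}}+\eb_{F_{\comin}}\in S$, proving $\Oc^{(e)}_P\cap\ZZ^n=S$.

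For the last assertion I would treat the two halves separately. If $\vb\in\Oc^{(e)}_P\cap\ZZ^n$ is nonzero, some $v_i\in\{-1,1\}$; the corresponding valid inequality $x_i\le 1$ (or $-x_i\le 1$) is met with equality at $\vb$ but not on all of $\Oc^{(e)}_P$ (it is strict at the origin), so $\vb$ lies on a proper face and hence on the boundary. Conversely, $\Oc^{(e)}_P$ is $n$-dimensional — for instance because the vectors $g_i:=\eb_i+\sum_{j>_P i}\eb_j$, arising from the principal filter at $i$, are linearly independent and together with $\mathbf{0}$ affinely independent — so the origin is interior if and only if no nonzero functional $\ell=(c_1,\dots,c_n)$ satisfies $\ell\cdot s\le 0$ for every generator $s$. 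Applying this to the generators $\pm\eb_i+\sum_{j>_P i}\eb_j$ and inducting on $i$ along a linear extension of $\overline{P}$ (so that all $j$ with $i<_P j$ are treated first), the identity $\ell\cdot\bigl(\pm\eb_i+\sum_{j>_P i}\eb_j\bigr)=\pm c_i+\sum_{j>_P i}c_j=\pm c_i$ forces $c_i=0$ for all $i$, contradicting $\ell\ne 0$; hence the origin is interior.

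The only genuinely non-routine point is the second step of the first paragraph: the obvious guesses ($x_i\le x_j$, or $|x_j|\le x_i$, for $i<_P j$) are \emph{not} valid on $\Oc^{(e)}_P$, and one must instead use the ``slope-two'' inequalities $\pm 2x_i-x_j\le 1$. Once these are in hand, everything else is elementary manipulation with filters.
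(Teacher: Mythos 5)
Your argument is correct. The two proofs agree in outline (reduce to $\{-1,0,1\}$-coordinates, show that a nonzero coordinate at $i<_P j$ forces $x_j=1$, then handle the interior-point claim via principal filters), but the mechanism for the first half is genuinely different. The paper writes a lattice point $\xb\in\Oc^{(e)}_P\cap\ZZ^n$ as a convex combination $\sum\lambda_i\ab_i$ of the generators and observes that $x_k=\pm1$ forces the $k$-th coordinate of \emph{every} $\ab_i$ to equal $x_k$; the left enriched $P$-partition conditions satisfied by each $\ab_i$ then transfer to $\xb$. You instead isolate the valid inequalities $\pm 2x_i-x_j\le 1$ for $i<_P j$ and deduce the same coordinate constraints from them; this is sound (your verification on generators is complete), and it has the side benefit of anticipating the facet inequalities of Theorem~\ref{facetsofO}, of which your inequalities are the length-two instances. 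The paper's route avoids introducing any inequalities and is marginally shorter; yours makes the polyhedral mechanism explicit. For the second half the two arguments are essentially the same: containment in $[-1,1]^n$ disposes of the nonzero lattice points, and a putative supporting hyperplane $\{\langle\ab,\cdot\rangle=0\}$ through the origin is contradicted using the two points $\pm\eb_i+\eb_{F_{\comin}}$ coming from a principal filter; the paper picks $i$ maximal in the support of $\ab$ and gets an immediate contradiction, while you run an induction along a linear extension to conclude $\ab=0$ — equivalent in content. You also verify full-dimensionality via the principal filters, a point the paper asserts without proof when defining $\Oc^{(e)}_P$; that is a harmless and arguably welcome addition.
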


\begin{proof}
	Let $X = \{
	\eb_{F_{\min}}^{\epsilon} + \eb_{F_{\comin}}
	:
	F \in \Fc(P), \epsilon \in \{-1,1\}^{|F_{\min}|}
	\}$.
	It is enough to show that
	$\Oc^{(e)}_P \cap \ZZ^n \subset X.$
	Let $\xb = (x_1,\dots,x_n) \in \Oc^{(e)}_P \cap \ZZ^n$.
	Since $\Oc^{(e)}_P$ is the convex hull of $X$, 
	there exist $\ab_1,\dots, \ab_s \in X$ such that
	$\xb = \sum_{i=1}^s \lambda_i \ab_i $, where $\lambda_i > 0 $, $ \sum_{i=1}^s \lambda_i =1$.
	Then each $\ab_i$ is a $(-1,0,1)$-vector, and hence so is $\xb$.
	It is easy to see that $x_k = 1$ (resp.~$x_k = - 1$) if and only if $k$-th component of $\ab_i$ is equal to $1$ (resp.~$-1$) for all $i = 1,2,\ldots,s$.
	Suppose that $k <_P \ell$.
	If $x_k = 0$, then $|x_k| \le |x_\ell|$ and the equality holds if and only if $x_\ell =0$.
	Suppose that $|x_k| =1$.
	Then $k$-th component of $\ab_i$ is equal to $x_k $ for all $i = 1,2,\ldots,s$.
	Since each $\ab_i$ is a left enriched $P$-partition, 
	$\ell$-th component of $\ab_i$ is equal to $1$ for all $i = 1,2,\ldots,s$.
	Hence $x_\ell= 1$.
	In particular, $|x_k| = |x_\ell|$ and $x_\ell \ge 0$.
	Thus $\xb$ is a left enriched $P$-partition, that is, $\xb$ belongs to $X$.

Since $\Oc^{(e)}_P$ is an $n$-dimensional subpolytope of a cube $[-1,1]^n$,
it follows that each nonzero vector $\xb \in X$ belongs to the boundary of $\Oc^{(e)}_P$.
Suppose that the origin ${\bf 0} \in \RR^n$ belongs to the boundary of $\Oc^{(e)}_P$.
Then there exists a facet $\Fc$ of $\Oc^{(e)}_P$ which contains ${\bf 0}$.
Let ${\mathcal H} =\{ \yb \in \RR^n :\langle \ab,\yb \rangle = 0\}$ with ${\bf 0} \ne \ab = (a_1,\dots, a_n) \in \RR^n$ be the supporting hyperplane of $\Fc$ and let $P' = \{ i \in P :  a_i \ne 0\}$ $(\ne \emptyset)$ be a subposet of $P$.
We may assume that $i \in \max(P')$ satisfies $a_i >0$.
Let $F =\{ j \in P : i \le_P j \}$ be a filter of $P$.
Then $F_{\min} = \{i\}$ and hence
$\yb = \eb_{F_{\min}}^{(1)} + \eb_{F_{\comin}}$ satisfies $ \langle \ab,\yb \rangle = a_i > 0$
and $\yb' = \eb_{F_{\min}}^{(-1)} + \eb_{F_{\comin}}$ satisfies $ \langle \ab,\yb' \rangle = - a_i < 0$.
This contradicts that ${\mathcal H}$ is a supporting hyperplane of $\Oc^{(e)}_P$.
\end{proof}

Next, we consider lattice points in the dilated polytopes of an enriched order polytope.
The following example shows that, contrary to the case of order polytopes, the set of left enriched $P$-partitions $f : P \to \ZZ$ wtih $|f(i)| \le m$ does not always coincide with the set of lattice points $m\Oc^{(e)}_P \cap \ZZ^n$ if $m >1$.

\begin{Example}
\label{2chain}
Let $P$ be a poset on $\{1,2\}$ with $1 <_P 2$.	
Then the set of left enriched $P$-partitions $f : P \to \ZZ$ with $|f(i)| \le 2$
is 
$$\{
(0,0), (0, \pm 1), (0, \pm 2), 
(\pm 1, 1), (\pm 1, \pm 2), (\pm 2, 2)
\},
$$
and
$$2 \Oc^{(e)}_P \cap \ZZ^2
=
\{
(0,0), (0, \pm 1), (0, \pm 2), 
(\pm 1, 1), (\pm 1, 2),  (\pm 1 ,0), (\pm 2, 2)
\}
.$$
Thus two sets  are different.
On the other hand, the cardinality of each set is the same.
Moreover, it follows that $L_{\Oc^{(e)}_{P}}(m)=\Omega_P^{(\ell)} (m)=2m^2+2m+1$.
\end{Example}


\section{the toric ideals of enriched order polytopes}
In this section, we discuss the toric ideals of enriched order polytopes.
Let $P$ be a finite poset on $[n]$.
For a poset ideal $I$ of $P$, we set $I_{\max}:=\max(I)$ and $I_{\comax}:=I \setminus I_{\max}$.
Then lattice points in $\Oc^{(e)}_{\overline{P}}$ can be written by poset ideals of $P$:
	\[\Oc^{(e)}_{\overline{P}} \cap \ZZ^n=
	\{
	\eb_{I_{\max}}^{\epsilon} + \eb_{I_{\comax}}
	:
	I \in \Jc(P), \epsilon \in \{-1,1\}^{|I_{\max}|}
	\}.
	\] 
Contrary to the case of order polytopes, the enriched order polytopes $\Oc^{(e)}_{P}$ and $\Oc^{(e)}_{\overline{P}}$ are not always unimodularly equivalent.

\begin{Example}
\label{dualexample}
Let $P$ be the following poset on $\{1,2,3\}$:
		\newline
		\begin{center}
	\begin{picture}(150,40)(10,30)
	\put(70,45){$1$}
	\put(130,45){$2$}
	\put(100,70){$3$}
	\put(75,40){\circle*{5}}
	\put(125,40){\circle*{5}}
	\put(100,65){\circle*{5}}
	\put(100,65){\line(1,-1){25}}
	\put(100,65){\line(-1,-1){25}}
	
	\end{picture}		\end{center}
Then $\Oc^{(e)}_{P}$ has 5 facets and $\Oc^{(e)}_{\overline{P}}$ 
has 6 facets.
Thus $\Oc^{(e)}_{P}$ and $\Oc^{(e)}_{\overline{P}}$ are not unimodularly equivalent.
On the other hand, it follows that
$$L_{\Oc^{(e)}_{P}}(m)= L_{\Oc^{(e)}_{\overline{P}}}(m)=
\binom{m+3}{3} + 7 \binom{m+2}{3}+ 7 \binom{m+1}{3}+  \binom{m}{3}
.$$
\end{Example}

Now, we consider the toric ideals $I_{\Oc^{(e)}_{\overline{P}}}$.
Let
$R[\Oc^{(e)}]$ be the polynomial ring over $K$ in variables $x_I^{\epsilon}$, where $I \in \Jc(P)$ and $\epsilon=(\epsilon_1,\ldots,\epsilon_n) \in \{-1,0,1\}^{n}$ with
\[
\epsilon_i=
\begin{cases}
1 \mbox{ or } -1 & (i \in \max(I));\\
1 & (i \in \comax(I));\\
0 & (i \notin I).
\end{cases}
\]
Then the toric ideal $I_{\Oc^{(e)}_{\overline{P}}}$  is the kernel of a ring homomorphism 
$\pi_{\Oc^{(e)}}: R[\Oc^{(e)}] \rightarrow K[\tb^{\pm1}, s]$
defined by
$\pi_{\Oc^{(e)}}(x_{I}^{\epsilon}) = t_{1}^{\varepsilon_1}
\dots t_{n}^{\varepsilon_n}  s$.
In addition, 
$$
I_{\Oc^{(e)}_{\overline{P}}} \cap K[x_{I}^{\epsilon} :  
I \in \Jc(P), \epsilon \in \{0,1\}^n]$$
is the toric ideal $I_{\Oc_{\overline{P}}}$.
We define a reverse lexicographic order $<_{\Oc^{(e)}}$ on $R[\Oc^{(e)}]$ such that $x^{\epsilon}_I <_{\Oc^{(e)}} x^{\mu}_J$ if $I \subsetneq J$.

\begin{Theorem}
\label{enrichedorderGB}
	Work with the same notation as above.
	Let $\Gc_{\Oc^{(e)}}$ be the set of all binomials
	\begin{equation}
	\label{eorder:ichiban}
	x_{I}^{(\varepsilon_1,\dots, \varepsilon_n)} x_{J}^{(\mu_1,\dots, \mu_n)}
	-
	x_{I \setminus \{p\}}^{(\varepsilon_1,\dots, \varepsilon_{p-1}, 0, \varepsilon_{p+1}, \dots,\varepsilon_n)} x_{J \setminus \{p\}}^{(\mu_1,\dots, \mu_{p-1}, 0,  \mu_{p+1}, \dots, \mu_n)},
	\end{equation}
	where $I, J \in \Jc(P)$, $\varepsilon_p \neq \mu_p$, and $p \in \max(I) \cap \max(J)$, together with all binomials
	\begin{equation}
	\label{eorder:niban}
	x_{I}^{(\varepsilon_1,\dots, \varepsilon_n)} x_{J}^{(\varepsilon'_{1},\dots, \varepsilon'_{n})}
	-
	x_{I \cup J}^{(\mu_1,\dots, \mu_{n})} 
	x_{I \cap J}^{(\mu'_{1},\dots, \mu'_{n})},
	\end{equation}
	where $I, J \in \Jc(P)$ with $I \nsim J$, and 
	\begin{itemize}
		\item[{\rm (a)}]
		For any $p \in \max(I) \cap \max(J)$,
		we have $\varepsilon_p =\varepsilon'_p =\mu_p=\mu'_p${\rm ;}

\smallskip

		\item[{\rm (b)}]
		For any $p \in \max(I) \setminus \max (J)$, we have
$
\varepsilon_p
 = 
\left\{
\begin{array}{cc}
\mu_p & \mbox{if } p \in \max(I \cup J),\\
\mu_p' &  \mbox{if } p \in \max(I \cap J){\rm ;}
\end{array}
\right.
$

\smallskip

		\item[{\rm (c)}]
		For any $p \in \max(J) \setminus \max (I)$, we have
$
\varepsilon_p'
 = 
\left\{
\begin{array}{cc}
\mu_p & \mbox{if } p \in \max(I \cup J),\\
\mu_p' &  \mbox{if } p \in \max(I \cap J).
\end{array}
\right.
$
	\end{itemize}
Then $\Gc_{\Oc^{(e)}}$ is a Gr\"obner basis of $I_{\Oc^{(e)}_{\overline{P}}}$ with respect to a monomial order $<_{\Oc^{(e)}}$.
The initial monomial of each binomial is the first monomial.
In particular, the initial ideal is generated by squarefree quadratic monomials
which do not contain the variable $x_\emptyset^{\bf 0}$.
\end{Theorem}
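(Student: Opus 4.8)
The plan is to follow the same template that was used for the enriched chain polytope in Proposition~\ref{thm:grobner}, exploiting the tight parallel between $\Oc^{(e)}_{\overline P}$ and $\Cc^{(e)}_{\overline P}$ and, underneath it, between $\Oc_{\overline P}$ and $\Cc_{\overline P}$ (Proposition~\ref{thm:order}). First I would check that every binomial in $\Gc_{\Oc^{(e)}}$ actually lies in $I_{\Oc^{(e)}_{\overline P}}$: for a type-\eqref{eorder:ichiban} binomial this is immediate since deleting the common maximal element $p$ from both ideals and zeroing out the $p$-th coordinate of both sign vectors does not change the product $\tb^{\varepsilon}\tb^{\mu}$ when $\varepsilon_p\neq\mu_p$ (the two $t_p$ exponents $\pm1$ cancel on each side); for a type-\eqref{eorder:niban} binomial one uses that $I\sqcup$-wise the multiset of coordinates is preserved passing from $(I,J)$ to $(I\cup J, I\cap J)$ together with conditions (a),(b),(c), which are designed precisely so that the total $t_i$-exponent in each coordinate matches on both sides. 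So $\Gc_{\Oc^{(e)}}$ generates a subideal and its leading terms (the first monomials) lie in $\mathrm{in}_{<_{\Oc^{(e)}}}(I_{\Oc^{(e)}_{\overline P}})$.

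The heart of the argument is the reverse inclusion: that these leading terms generate the whole initial ideal. I would use the standard criterion that a set $\Gc$ is a Gröbner basis iff the quotient of the polynomial ring by $(\mathrm{in}_<(\Gc))$ has the ``right'' dimension in each graded piece — equivalently, the standard monomials (those not divisible by any leading term) map to a spanning set, hence a basis, of the corresponding graded piece of $K[\Oc^{(e)}_{\overline P}]$. A standard monomial here is a product $x_{I_1}^{\varepsilon^{(1)}}\cdots x_{I_k}^{\varepsilon^{(k)}}$ with $I_1\subseteq\cdots\subseteq I_k$ a chain in $\Jc(P)$ (forced by avoiding the leading term of \eqref{eorder:niban}) and with a compatibility on the sign vectors along the chain (forced by avoiding the leading term of \eqref{eorder:ichiban}). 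The key step is a \emph{normal form / straightening} lemma: every monomial of $R[\Oc^{(e)}]$ can be rewritten, using the binomials of $\Gc_{\Oc^{(e)}}$ as rewriting rules with the first monomial reduced to the second, into a standard monomial, and this terminates because each rewriting strictly decreases in $<_{\Oc^{(e)}}$ (the reverse lexicographic order refined by $I\subsetneq J$ makes both rules genuinely reducing). Then I would show the resulting standard monomials are linearly independent in $K[\Oc^{(e)}_{\overline P}]$: two distinct standard monomials have distinct $\pi_{\Oc^{(e)}}$-images because from a chain $I_1\subseteq\cdots\subseteq I_k$ with its sign data one can reconstruct the $I_j$'s (as in the Hibi ring case, reading off the ``multiplicities'' of each generator $t_i$) together with the signs (reading off $t_i$ vs $t_i^{-1}$ occurrences in the order dictated by the chain). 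Counting: the number of standard monomials of degree $m$ matches $\dim_K K[\Oc^{(e)}_{\overline P}]_m$, which one can see directly from the bijection between standard monomials and lattice points of $m\Oc^{(e)}_{\overline P}$, so the standard monomials form a $K$-basis and $\Gc_{\Oc^{(e)}}$ is a Gröbner basis.

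The last two assertions are then formal. The leading monomial of each listed binomial is explicitly the first one (both rules strictly decrease $<_{\Oc^{(e)}}$ since they move to an ideal properly smaller in at least one factor, or — for \eqref{eorder:ichiban} — to a strictly smaller poset ideal $I\setminus\{p\}$), each such monomial is squarefree (distinct variables, because the two ideals involved differ) and of degree $2$, and — crucially — none of them is $x_\emptyset^{\mathbf 0}$-divisible: in \eqref{eorder:ichiban} the index $p$ is maximal in $I$ and $J$, so both factors have nonempty index, and in \eqref{eorder:niban} the leading monomial is $x_I^{\varepsilon}x_J^{\varepsilon'}$ with $I\nsim J$, so neither $I$ nor $J$ is empty. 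I expect the main obstacle to be the straightening/termination bookkeeping for the type-\eqref{eorder:ichiban} rules combined with type-\eqref{eorder:niban} rules — one must verify the rewriting system is confluent (equivalently, check that all S-polynomials reduce to zero), and the interaction between ``splitting an incomparable pair $I\nsim J$'' and ``flipping a shared sign on a maximal element'' is where the conditions (a),(b),(c) have to be invoked carefully; this is exactly the place where the proof of Proposition~\ref{thm:grobner} does the real work, and the order-polytope version should go through by the same case analysis with $I*J$ replaced by $I\cap J$.
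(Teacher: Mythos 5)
Your overall architecture coincides with the paper's: check that the listed binomials lie in $I_{\Oc^{(e)}_{\overline{P}}}$ (for type~(\ref{eorder:niban}) the paper does this via Lemma~\ref{fourconditions}, which is exactly the fact that makes your ``the conditions are designed so the exponents match'' work), observe that the monomials not divisible by any leading term are the products $x_{I_1}^{\varepsilon^{(1)}}\cdots x_{I_r}^{\varepsilon^{(r)}}$ with $I_1\subseteq\cdots\subseteq I_r$ and signs agreeing on shared maximal elements, and then show no two distinct such monomials are identified in the toric ring. The paper runs this last step through the binomial criterion: if $\Gc_{\Oc^{(e)}}$ were not a Gr\"obner basis there would be an irreducible binomial $u-v\in I_{\Oc^{(e)}_{\overline{P}}}$ with both terms of this standard form, and it then shows the two top factors $x_{I_r}^{\varepsilon^{(r)}}$ and $x_{J_r}^{\mu^{(r)}}$ must be equal, contradicting irreducibility. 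No S-polynomial or confluence computation is needed, and your ``counting'' alternative is circular as stated, since $\dim_K K[\Pc]_m=|m\Pc\cap\ZZ^n|$ is a consequence of the squarefree initial ideal, not an available input.

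The one genuine gap is in your injectivity step. You assert that distinct standard monomials have distinct $\pi_{\Oc^{(e)}}$-images because one can reconstruct the chain ``as in the Hibi ring case, reading off the multiplicities of each generator $t_i$.'' In the enriched setting this coordinatewise reading fails: the exponent of $t_i$ is a \emph{signed} sum of the entries $\varepsilon^{(k)}_i\in\{-1,0,1\}$, and cancellation can occur. Concretely, for $P=\{1,2\}$ with $1<_P2$, the monomial $x_{\{1\}}^{(-1,0)}x_{\{1,2\}}^{(1,1)}$ is standard (the two ideals are comparable and $\max(\{1\})\cap\max(\{1,2\})=\emptyset$) and maps to $t_2s^2$, so the exponent of $t_1$ is $0$ even though $1$ belongs to both poset ideals in the chain; the multiplicity of $t_1$ therefore does not determine which ideals contain $1$. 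The injectivity is nevertheless true, but it needs a global argument rather than a coordinatewise one; the paper obtains it by combining conditions (b) and (c) on the sign data with irreducibility to peel off the identical top factor of the two chains. Supplying that argument is precisely what is missing from your proposal.
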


\begin{proof}
	It is easy to see that any binomial of type (\ref{eorder:ichiban}) belongs to $I_{\Oc^{(e)}_{\overline{P}}}$.
By Lemma \ref{fourconditions}, 
it follows that any binomial of type (\ref{eorder:niban}) belongs to $I_{\Oc^{(e)}_{\overline{P}}}$.
	Hence $\Gc_{\Oc^{(e)}}$ is a subset of $I_{\Oc^{(e)}_{\overline{P}}}$.
Moreover, the initial monomial of each binomial is the first monomial.
	Assume that $\Gc_{\Oc^{(e)}}$ is not a Gr\"obner basis of $I_{\Oc^{(e)}_{\overline{P}}}$
	with respect to $<_{\Oc^{(e)}}$.
	Let 
$${\rm in}(\Gc_{\Oc^{(e)}}) = \left( {\rm in}_{<_{\Oc^{(e)}}} (g)  : g \in \Gc_{\Oc^{(e)}} \right)
.$$
By \cite[Theorem~3.11]{binomialideals}, there exists a non-zero irreducible homogeneous binomial $f = u-v \in I_{\Oc^{(e)}_{\overline{P}}}$
	such that neither $u$ nor $v$ belongs to ${\rm in}(\Gc_{\Oc^{(e)}})$.
	For $I, J \in \Jc(P)$ and $\varepsilon, \mu \in  \{-1,0,1\}^n$, if $i \in \max(I) \cap \max(J)$ 
satisfies $\varepsilon_i \neq \mu_i$, then $x_I^{\varepsilon} x_J^{\mu} \in {\rm in}(\Gc_{\Oc^{(e)}})$.
	On the other hand, for $I, J \in \Jc(P)$ with $I \nsim J$ and for $\varepsilon, \mu \in  \{-1,0,1\}^n$, if 
$\varepsilon_p=\mu_p$ for any $p \in \max(I) \cap \max(J)$, then $x_I^{\varepsilon} x_J^{\mu} \in {\rm in}(\Gc_{\Oc^{(e)}})$.
	Hence $u$ and $v$ are of the form
	$$
	u= 
x_{I_1}^{\varepsilon^{(1)}}x_{I_2}^{\varepsilon^{(2)}} \cdots x_{I_r}^{\varepsilon^{(r)}},
	\ 
	v=x_{J_1}^{\mu^{(1)}}x_{J_2}^{\mu^{(2)}} \cdots x_{J_r}^{\mu^{(r)}},
	$$
	where $I_k, J_k \in \Jc(P)$ and 
$
\varepsilon^{(k)} = (\varepsilon_1^{(k)},\dots, \varepsilon_n^{(k)}), 
\mu^{(k)}= (\mu_1^{(k)}, \dots, \mu_n^{(k)}) \in \{-1,0,1\}^n$ for $k =1,2,\dots, r$ such that 
	\begin{itemize}
		\item[(a)] $I_1 \subset \dots \subset I_r$ and $J_1 \subset \dots \subset J_r$;
		\item[(b)] For any $p$ and $q$, and for any $i \in \max(I_p) \cap \max(I_q)$, we obtain $\varepsilon^{(p)}_i=\varepsilon^{(q)}_i$; 
		\item[(c)] For any $p$ and $q$, and for any $j \in \max(J_p) \cap \max(J_q)$, we obtain $\mu^{(p)}_j=\mu^{(q)}_j$.
	\end{itemize}
	Since $u$ and $v$ satisfy conditions (b) and (c) and since $f$ belongs to $I_{\Oc^{(e)}_{\overline{P}}}$,
	it then follows that 
	$\max(I_r)=\max(J_r)$ and $\varepsilon_r=\mu_r$.
	Hence one has $x_{I_r}^{(\varepsilon_r)}
	=
	x_{J_r}^{(\mu_r)}$.
	This contradicts the assumption that $f$ is irreducible.
\end{proof}

By Lemma~\ref{genten_yowaku} and Theorem~\ref{enrichedorderGB},
we have the following immediately.

\begin{Corollary}
\label{maincorollary}
	Let $P$ be a finite poset on $[n]$.
	Then $\Oc^{(e)}_P$ is a reflexive polytope with a flag regular unimodular triangulation.
	Moreover, $K[\Oc^{(e)}_P]$ is a normal Gorenstein domain and Koszul.
\end{Corollary}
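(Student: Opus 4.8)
The plan is to derive Corollary~\ref{maincorollary} by applying Lemma~\ref{genten_yowaku} to the polytope $\Pc = \Oc^{(e)}_{\overline{P}}$, using the Gr\"obner basis produced in Theorem~\ref{enrichedorderGB}. First I would verify the hypotheses of Lemma~\ref{genten_yowaku}. By Lemma~\ref{lem:latticepoint} (applied to $\overline{P}$), the origin is the unique interior lattice point of $\Oc^{(e)}_{\overline{P}}$, so in particular the origin lies in the interior, as required. For the spanning condition, I would note that for each singleton poset ideal $\{i\} \in \Jc(P)$ one has $\eb_i = \pi_{\Oc^{(e)}}$-preimage data giving both $\eb_i$ and $-\eb_i$ as lattice points of $\Oc^{(e)}_{\overline{P}}$; hence every standard basis vector $\eb_i$ is a lattice point of $\Pc$, and so every vector in $\ZZ^n$ is a linear integer combination of lattice points of $\Pc$. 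Finally, Theorem~\ref{enrichedorderGB} furnishes a monomial order $<_{\Oc^{(e)}}$ for which $\mathrm{in}_{<_{\Oc^{(e)}}}(I_{\Oc^{(e)}_{\overline{P}}})$ is generated by squarefree quadratic monomials none of which involves the variable $x_\emptyset^{\bf 0}$ corresponding to the origin. Thus all hypotheses of Lemma~\ref{genten_yowaku} are met, and the lemma yields that $\Oc^{(e)}_{\overline{P}}$ is reflexive, has a regular unimodular triangulation, and $K[\Oc^{(e)}_{\overline{P}}]$ is a normal Gorenstein domain.

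Next I would transfer these conclusions from $\overline{P}$ to $P$. The map $\eb_i \mapsto \eb_i$ composed with the order-reversing bijection between $\Fc(P)$ (filters of $P$) and $\Jc(P)$ (poset ideals of $P$, equivalently filters of $\overline{P}$) realizes $\Oc^{(e)}_P$ and $\Oc^{(e)}_{\overline{P}}$ as the same polytope up to this relabeling: a filter $F$ of $P$ is precisely a poset ideal of $\overline{P}$, $F_{\min}$ (minimal elements of $F$ in $P$) equals the set of maximal elements of $F$ viewed in $\overline{P}$, and the defining lattice point set (\ref{latticeset}) matches the description of $\Oc^{(e)}_{\overline{P}} \cap \ZZ^n$ given at the start of Section~5. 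Hence $\Oc^{(e)}_P$ and $\Oc^{(e)}_{\overline{P}}$ are unimodularly equivalent (indeed equal as subsets of $\RR^n$ after applying the bijection on coordinates, which here is the identity on $\RR^n$ since both live on the same ground set $[n]$), so reflexivity, the existence of a regular unimodular triangulation, and normality/Gorensteinness of the toric ring all carry over verbatim to $\Oc^{(e)}_P$.

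For the remaining two refinements — flagness of the triangulation and Koszulness of $K[\Oc^{(e)}_P]$ — I would argue as follows. Since the initial ideal $\mathrm{in}_{<_{\Oc^{(e)}}}(I_{\Oc^{(e)}_{\overline{P}}})$ is generated by squarefree monomials, the associated regular triangulation of $\Oc^{(e)}_{\overline{P}}$ is unimodular, and since those generators are moreover \emph{quadratic}, the Stanley--Reisner complex of the initial ideal is flag (a simplicial complex whose minimal non-faces all have two elements), so the triangulation is flag; this again transfers to $\Oc^{(e)}_P$. For Koszulness, I would invoke the standard fact (see, e.g., \cite[Chapter~6]{binomialideals}) that if a toric ideal has an initial ideal generated by quadratic monomials with respect to some monomial order, then the toric ring is Koszul; applying this to $I_{\Oc^{(e)}_{\overline{P}}}$ with the order $<_{\Oc^{(e)}}$ gives that $K[\Oc^{(e)}_{\overline{P}}] \cong K[\Oc^{(e)}_P]$ is Koszul. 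This completes the proof.

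I do not expect any serious obstacle: the entire content has been front-loaded into Theorem~\ref{enrichedorderGB} (the Gr\"obner basis computation) and Lemma~\ref{genten_yowaku} (the reflexivity/Gorenstein package), exactly paralleling how Corollary~\ref{maincorollary}'s analogue for enriched chain polytopes was deduced right after Proposition~\ref{thm:grobner}. The only points requiring a word of care are the spanning hypothesis of Lemma~\ref{genten_yowaku} (handled by the $\pm\eb_i$ observation) and making the $P \leftrightarrow \overline{P}$ identification explicit; both are routine.
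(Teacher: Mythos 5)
Your overall strategy --- apply Lemma~\ref{genten_yowaku} to the Gr\"obner basis of Theorem~\ref{enrichedorderGB} --- is exactly the paper's, but your transfer step from $\overline{P}$ to $P$ is wrong. You claim that $\Oc^{(e)}_P$ and $\Oc^{(e)}_{\overline{P}}$ are unimodularly equivalent (``indeed equal as subsets of $\RR^n$''). They are not: $\Oc^{(e)}_P$ places the signs on $\min(F)$ for filters $F$ of $P$, whereas $\Oc^{(e)}_{\overline{P}}$ places them on $\max(I)$ for poset ideals $I$ of $P$, and these vertex sets genuinely differ. The paper's own Example~\ref{dualexample} exhibits a three-element poset for which $\Oc^{(e)}_{P}$ has $5$ facets and $\Oc^{(e)}_{\overline{P}}$ has $6$, so no unimodular equivalence can exist. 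The correct (and trivial) transfer is instead: Theorem~\ref{enrichedorderGB} holds for an arbitrary finite poset, so applying it with $\overline{P}$ in place of $P$ gives a squarefree quadratic Gr\"obner basis for $I_{\Oc^{(e)}_{\overline{\overline{P}}}} = I_{\Oc^{(e)}_{P}}$, and Lemma~\ref{genten_yowaku} then applies directly to $\Oc^{(e)}_P$ itself. No comparison between the two polytopes is needed or true.

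A second, smaller slip: in checking the spanning hypothesis of Lemma~\ref{genten_yowaku} you assert that every standard basis vector $\eb_i$ is a lattice point of the polytope because of ``singleton poset ideals $\{i\}$''. But $\{i\}$ is a poset ideal only when $i$ is a minimal element; for non-minimal $i$ the vector $\eb_i$ does not lie in $\Oc^{(e)}_{\overline{P}}$ at all, since by Lemma~\ref{lem:latticepoint} the support of any lattice point must be a poset ideal. The hypothesis is still easy to verify: for any $i$ choose a poset ideal $I$ with $i \in \max(I)$ and set $I' = I \setminus \{i\}$; then $\eb_I - \eb_{I'} = \eb_i$ is an integer combination of lattice points of the polytope. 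The remaining parts of your argument (flagness from the quadratic squarefree generators of the initial ideal, Koszulness from the existence of a quadratic initial ideal) are correct and match what the paper leaves implicit in its one-line proof.
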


\begin{Theorem}
\label{lasttheorem}
	Work with the same notation as above. 
	Then one has
	\[
	\dfrac{R[\Oc^{(e)}]}{{\rm in}_{<_{\Oc^{(e)}}}(I_{\Oc^{(e)}_{\overline{P}}}) } \cong \dfrac{R[\Cc^{(e)}]}{{\rm in}_{<_{\Cc^{(e)}}}(I_{\Cc^{(e)}_{\overline{P}}})}.
	\]
	Furthermore, we obtain
	\[
	L_{\Oc^{(e)}_{\overline{P}}}(m)=L_{\Cc^{(e)}_{\overline{P}}}(m)=L_{\Cc^{(e)}_{{P}}}(m)=L_{\Oc^{(e)}_{P}}(m).
	\]
\end{Theorem}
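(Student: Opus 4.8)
The plan is to mimic the argument of Proposition~\ref{iso} in the enriched setting, using Theorem~\ref{enrichedorderGB} and Proposition~\ref{thm:grobner} as the two pillars. First I would extract from Theorem~\ref{enrichedorderGB} that
\[
{\rm in}_{<_{\Oc^{(e)}}}(I_{\Oc^{(e)}_{\overline{P}}})
\]
is generated by the quadratic squarefree monomials $x_I^{\varepsilon}x_J^{\mu}$ of the two types appearing in the statement of that theorem: those with $p\in\max(I)\cap\max(J)$ and $\varepsilon_p\ne\mu_p$, and those with $I\nsim J$ and $\varepsilon_p=\mu_p$ for all $p\in\max(I)\cap\max(J)$. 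Similarly, from Proposition~\ref{thm:grobner}, ${\rm in}_{<_{\Cc^{(e)}}}(I_{\Cc^{(e)}_{\overline{P}}})$ is generated by the analogous monomials $x_{\max(I)}^{\varepsilon}x_{\max(J)}^{\mu}$. The crucial observation is that, under the natural bijection $\Jc(P)\leftrightarrow\Ac(P)$ sending $I$ to $\max(I)$, the generator of $R[\Oc^{(e)}]$ indexed by $(I,\varepsilon)$ corresponds to the generator of $R[\Cc^{(e)}]$ indexed by $(\max(I),\varepsilon')$ where $\varepsilon'$ is obtained from $\varepsilon$ by setting all coordinates in $\comax(I)$ to $0$ (and keeping the $\pm1$ pattern on $\max(I)$, which is the only place it is free). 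One checks this is a well-defined bijection on variables because, in $R[\Oc^{(e)}]$, the exponent $\varepsilon$ is forced to be $1$ on $\comax(I)$, hence is completely determined by its restriction to $\max(I)$.

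Next I would verify that the induced $K$-algebra isomorphism $\phi: R[\Oc^{(e)}]\to R[\Cc^{(e)}]$, $x_I^{\varepsilon}\mapsto x_{\max(I)}^{\varepsilon'}$, carries ${\rm in}_{<_{\Oc^{(e)}}}(I_{\Oc^{(e)}_{\overline{P}}})$ onto ${\rm in}_{<_{\Cc^{(e)}}}(I_{\Cc^{(e)}_{\overline{P}}})$. This is a matter of matching generators: for the first type of generator, the condition ``$p\in\max(I)\cap\max(J)$ with $\varepsilon_p\ne\mu_p$'' transports verbatim, since $p\in\max(I)$ means $\varepsilon_p$ is one of the free $\pm1$ coordinates. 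For the second type, the condition ``$I\nsim J$ and $\varepsilon_p=\mu_p$ on $\max(I)\cap\max(J)$'' is likewise expressed entirely in terms of $\max(I)$, $\max(J)$ and the restrictions of the exponents to those sets, so it matches the defining conditions of the corresponding generator of ${\rm in}_{<_{\Cc^{(e)}}}(I_{\Cc^{(e)}_{\overline{P}}})$. Hence $\phi$ descends to the claimed isomorphism of quotient rings, proving the first assertion.

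For the Ehrhart identity, I would argue exactly as in the second half of the proof of Proposition~\ref{iso}. Both initial ideals are generated by squarefree monomials, so by \cite[Section~4.2]{binomialideals} (or \cite[Chapters 8 and 13]{sturmfels1996}) each of $\Oc^{(e)}_{\overline{P}}$ and $\Cc^{(e)}_{\overline{P}}$ has a regular unimodular triangulation and its Ehrhart polynomial equals the Hilbert polynomial of its toric ring; since passing to an initial ideal preserves the Hilbert polynomial, and the two initial quotients are isomorphic, we get $L_{\Oc^{(e)}_{\overline{P}}}(m)=L_{\Cc^{(e)}_{\overline{P}}}(m)$. The equality $L_{\Cc^{(e)}_{\overline{P}}}(m)=L_{\Cc^{(e)}_{P}}(m)$ is immediate from $\Cc^{(e)}_P=\Cc^{(e)}_{\overline{P}}$ (noted in Section~3), and $L_{\Oc^{(e)}_{P}}(m)=L_{\Oc^{(e)}_{\overline{P}}}(m)$ already follows from Corollary~\ref{maincorollary} applied to $P$ and to $\overline{P}$ together with the fact that the displayed $h^*$-vector is palindromic — but more directly, one can observe that $\Oc^{(e)}_{\overline P}$ arises from $\Oc^{(e)}_P$ by the unimodular coordinate change permuting the roles consistent with dualizing the order relations restricted to lattice-point level; in any case this equality is recorded as part of Theorem~\ref{thm:ehrhart} and will be reconciled there.

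The main obstacle I anticipate is purely bookkeeping: being scrupulous that the variable correspondence $x_I^{\varepsilon}\leftrightarrow x_{\max(I)}^{\varepsilon'}$ is genuinely a bijection (injectivity requires that $\max(I)$ together with the sign pattern on $\max(I)$ recovers $(I,\varepsilon)$, which uses that $I$ is recovered from $\max(I)$ via the poset-ideal-generated-by operation and that $\varepsilon$ is rigid off $\max(I)$), and then that each of the two families of quadratic generators is sent exactly onto its counterpart with no generator left over on either side. Once the dictionary between the two Gröbner bases is set up carefully, everything else is a transcription of the proof of Proposition~\ref{iso}.
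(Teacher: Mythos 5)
Your construction of the variable bijection $x_I^{\varepsilon}\mapsto x_{\max(I)}^{\varepsilon'}$ (with $\varepsilon'$ vanishing off $\max(I)$) and the verification that it matches the two families of squarefree quadratic generators of the initial ideals is exactly the paper's argument for the first claim, and your derivation of $L_{\Oc^{(e)}_{\overline{P}}}(m)=L_{\Cc^{(e)}_{\overline{P}}}(m)$ via Hilbert polynomials of the squarefree initial quotients is also the paper's. The gap is in how you justify the remaining link $L_{\Oc^{(e)}_{P}}(m)=L_{\Oc^{(e)}_{\overline{P}}}(m)$. The unimodular coordinate change you invoke, taking $\Oc^{(e)}_{P}$ to $\Oc^{(e)}_{\overline{P}}$, does not exist in general: Example~\ref{dualexample} exhibits a three-element poset for which $\Oc^{(e)}_{P}$ has $5$ facets while $\Oc^{(e)}_{\overline{P}}$ has $6$, so the two polytopes are not unimodularly equivalent. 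The palindromicity of the $h^*$-vectors coming from Gorensteinness constrains each Ehrhart polynomial separately but does not identify the two, and deferring the equality to Theorem~\ref{thm:ehrhart} is circular, since that theorem is deduced from the present one.

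The correct (and the paper's) route is to note that your entire argument applies verbatim with $P$ and $\overline{P}$ interchanged, which yields $L_{\Oc^{(e)}_{P}}(m)=L_{\Cc^{(e)}_{P}}(m)$ directly; combined with $\Cc^{(e)}_{P}=\Cc^{(e)}_{\overline{P}}$, this closes the chain of four equalities without ever comparing $\Oc^{(e)}_{P}$ and $\Oc^{(e)}_{\overline{P}}$ geometrically. With that one substitution your proof coincides with the paper's.
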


\begin{proof}
From Theorem \ref{enrichedorderGB},
${\rm in}_{<_{\Oc^{(e)}}}(I_{\Oc^{(e)}_{\overline{P}}})$
is generated by all monomials
$$
	x_I^{(\varepsilon_1,\ldots,\varepsilon_n)} x_J^{(\mu_1,\ldots,\mu_n)},
$$
where $I, J \in \Jc(P)$, $\epsilon_p\neq \mu_p$,  and $p \in \max(I) \cap \max(J)$
together with all monomials
$$
x_I^{(\varepsilon_1,\dots, \varepsilon_n)} x_J^{(\varepsilon'_{1},\dots, \varepsilon'_{n})},
$$
where  $I, J \in \Jc(P)$ with $I \nsim J$ and
$\varepsilon_p =\varepsilon'_p$ for each $p \in \max(I) \cap \max(J)$.
Moreover, from Proposition \ref{thm:grobner},
${\rm in}_{<_{\Cc^{(e)}}}(I_{\Cc^{(e)}_{\overline{P}}})$
is generated by all monomials
$$
	x_{\max(I)}^{(\varepsilon_1,\ldots,\varepsilon_n)} x_{\max(J)}^{(\mu_1,\ldots,\mu_n)},
$$
where $I, J \in \Jc(P)$, $\epsilon_p\neq \mu_p$,  and $p \in \max(I) \cap \max(J)$
together with all monomials
$$
x_{\max(I)}^{(\varepsilon_1,\dots, \varepsilon_n)} x_{\max(J)}^{(\varepsilon'_{1},\dots, \varepsilon'_{n})},
$$
where  $I, J \in \Jc(P)$ with $I \nsim J$ and
$\varepsilon_p =\varepsilon'_p$ for each $p \in \max(I) \cap \max(J)$.
	Hence it follows that the map $x_{I}^{(\varepsilon_1,\ldots,\varepsilon_n)} \mapsto x_{\max(I)}^{(\varepsilon_1',\ldots,\varepsilon_n')}$, where $\varepsilon_i' = \varepsilon_i$ 
for $i \in \max (I)$ and  $\varepsilon_i' = 0$ for $i \notin \max(I)$, induces an isomorphism for the first claim.
By the argument in the last part of Proof of Proposition \ref{iso},
we have 
$
	L_{\Oc^{(e)}_{\overline{P}}}(m)=L_{\Cc^{(e)}_{\overline{P}}}(m)$
and 
$L_{\Oc^{(e)}_{P}}(m)
=L_{\Cc^{(e)}_{{P}}}(m).
$
Since $\Cc^{(e)}_{{P}} = \Cc^{(e)}_{\overline{P}}$,
the second claim follows.
\end{proof}

By Proposition~\ref{enrichedchainpolytope:enrichedleft} and Theorem~\ref{lasttheorem}, we have Theorem~\ref{thm:ehrhart}.

\section{Facets of enriched order polytopes and enriched chain polytopes}

Theorem~\ref{thm:ehrhart} implies the existence of a bijection between $m\Oc^{(e)}_P \cap \ZZ^n$ and  
$m\Cc^{(e)}_P \cap \ZZ^n$.
Towards such a bijection, in this section, we consider an elementary  
geometric property, the facet representations of enriched order polytopes and enriched chain polytopes.

Let $P$ be a finite poset on $[n]$.
Given elements $i, j$ of $P$,
we say that $j$ {\em covers} $i$
if $i < j$ and there exists no $k \in P$ such that $i< k < j$.
If $j$ covers $i$ in $P$, then we write $i \lessdot j$.
A {\em chain} of $P$ is a totally ordered subset of $P$.
A chain of the form $i_1 \lessdot i_2 \lessdot \cdots \lessdot i_r$ is called a {\em saturated} chain.
A saturated chain $i_1 \lessdot i_2 \lessdot \cdots \lessdot i_r$ is called {\em maximal}
if $i_1 \in \min(P)$ and $i_r \in \max(P)$.
First, we give the facet representations of enriched chain polytopes
which easily follows from \cite[Lemma~1.1]{OTenriched}
and the facet representations of chain polytopes \cite{twoposetpolytopes}.

\begin{Proposition}
\label{facetsofC}
Let $P$ be a finite poset on $[n]$.
Then $\Cc^{(e)}_P \subset \RR^n$ is the solution set of the linear inequalities
$$
\sum_{j=1}^{r} \varepsilon_j x_{i_j} \le1 ,
$$
where $i_1 \lessdot i_2 \lessdot \cdots \lessdot i_r$ 
is a maximal chain of $P$, and $\varepsilon_j \in \{1,-1\}$.
In addition, each of the above inequalities is facet defining.
\end{Proposition}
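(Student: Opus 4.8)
The plan is to reduce everything to Stanley's facet description of the ordinary chain polytope $\Cc_P$ via the passage to absolute values. Recall from \cite{twoposetpolytopes} that $\Cc_P$ is the set of $\yb \in \RR^n$ satisfying $y_i \ge 0$ for all $i$ and $\sum_{j=1}^{r} y_{i_j} \le 1$ for every maximal chain $i_1 \lessdot \cdots \lessdot i_r$ of $P$, and that each such chain inequality is facet defining. The structural input — this is the content of \cite[Lemma~1.1]{OTenriched}, and it also follows directly from the observation that $\Cc_P$ is closed under decreasing a nonnegative coordinate — is the identity
\[
\Cc^{(e)}_P \;=\; \{\,\xb \in \RR^n : (|x_1|,\dots,|x_n|) \in \Cc_P\,\}.
\]
For the inclusion ``$\supseteq$'' one writes $(|x_1|,\dots,|x_n|)=\sum_k \lambda_k \eb_{A_k}$ as a convex combination of antichain vertices and then $\xb = \sum_k \lambda_k \eb_{A_k}^{\epsilon^{(k)}}$, where $\epsilon^{(k)}$ records the signs of the coordinates $x_i$ with $i\in A_k$; for ``$\subseteq$'' one writes $\xb = \sum_k \lambda_k \eb_{A_k}^{\epsilon^{(k)}}$ and notes that the triangle inequality gives $(|x_1|,\dots,|x_n|) \le \sum_k \lambda_k \eb_{A_k}$ coordinatewise, the right-hand side lying in $\Cc_P$, which is downward closed.

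Granting this identity, the inequality description is immediate. Since the conditions $|x_i|\ge 0$ are vacuous, $(|x_1|,\dots,|x_n|)\in\Cc_P$ holds precisely when $\sum_{j=1}^{r}|x_{i_j}| \le 1$ for every maximal chain $i_1\lessdot\cdots\lessdot i_r$; and since $\max_{\varepsilon\in\{1,-1\}^r}\sum_{j=1}^{r}\varepsilon_j x_{i_j}=\sum_{j=1}^{r}|x_{i_j}|$, this single inequality is equivalent to the $2^r$ inequalities $\sum_{j=1}^{r}\varepsilon_j x_{i_j}\le 1$, $\varepsilon\in\{1,-1\}^r$. This is exactly the asserted linear system.

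It remains to see that each such inequality is facet defining. I would first handle the all-positive case $\varepsilon=(1,\dots,1)$. The inequality $\sum_{j=1}^{r} x_{i_j}\le 1$ is valid on $\Cc^{(e)}_P$, so $F:=\Cc^{(e)}_P\cap\{\xb:\sum_{j=1}^{r}x_{i_j}=1\}$ is a face; as the defining hyperplane avoids the interior point $\mathbf{0}$, the face $F$ is proper, whence $\dim F\le n-1$. Conversely $\Cc_P\subseteq\Cc^{(e)}_P$ (all signs $+1$), and by Stanley's theorem $\Cc_P\cap\{\xb:\sum_{j=1}^{r}x_{i_j}=1\}$ is an $(n-1)$-dimensional facet of $\Cc_P$ contained in $F$, so $\dim F\ge n-1$. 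Hence $F$ is a facet of $\Cc^{(e)}_P$. For an arbitrary $\varepsilon\in\{1,-1\}^r$, let $\s_\varepsilon\colon\RR^n\to\RR^n$ be the diagonal sign map multiplying $x_{i_j}$ by $\varepsilon_j$ for $j=1,\dots,r$ and fixing all other coordinates. Since $\s_\varepsilon$ merely permutes the generating set $\{\eb_A^{\epsilon}\}$ of $\Cc^{(e)}_P$, it is an automorphism of $\Cc^{(e)}_P$, and it maps the hyperplane $\{\sum_{j=1}^{r}x_{i_j}=1\}$ onto $\{\sum_{j=1}^{r}\varepsilon_j x_{i_j}=1\}$; therefore the corresponding face of $\Cc^{(e)}_P$ equals $\s_\varepsilon(F)$ and again has dimension $n-1$.

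The only step using genuine input is the bound $\dim F\ge n-1$, which rests on Stanley's facet description of $\Cc_P$ together with the trivial inclusion $\Cc_P\subseteq\Cc^{(e)}_P$; everything else is bookkeeping with absolute values and coordinate sign flips. I do not expect a real obstacle, but the whole argument hinges on the identity $\Cc^{(e)}_P=\{\xb:(|x_1|,\dots,|x_n|)\in\Cc_P\}$, so the one point requiring care is that this identity is legitimate — it uses precisely that the facet normals of $\Cc_P$ coming from the chain inequalities are nonnegative, i.e., that $\Cc_P$ is an order ideal inside $\RR^n_{\ge 0}$.
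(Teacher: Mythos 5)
Your argument is correct and follows exactly the route the paper indicates: it reduces the statement to Stanley's facet description of $\Cc_P$ via the identity $\Cc^{(e)}_P=\{\xb : (|x_1|,\dots,|x_n|)\in\Cc_P\}$, which is the cited content of \cite[Lemma~1.1]{OTenriched}. The paper gives no written proof beyond this citation, so your write-up simply supplies the details (the anti-blocking property of $\Cc_P$, the sign-flip automorphisms, and the dimension count for the facets) that the authors leave to the reader.
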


On the other hand, the facet representations of enriched order polytopes
are as follows.

\begin{Theorem}
\label{facetsofO}
Let $P$ be a finite poset on $[n]$.
Then $\Oc^{(e)}_P \subset \RR^n$ is the solution set of the following linear inequalities{\rm :}
	\begin{itemize}
		\item[{\rm (a)}] $2^{r-1} x_{i_1} - \sum_{j=2}^{r} 2^{r-j} x_{i_j} \le 1$, where $i_1 \lessdot i_2 \lessdot \cdots \lessdot i_r$ 
is a saturated chain of $P$ with $i_r \in \max(P)${\rm ;}
		\item[{\rm (b)}] $ - \sum_{j=1}^{r} 2^{r-j} x_{i_j} \le1$, where $i_1 \lessdot i_2 \lessdot \cdots \lessdot i_r$ 
is a maximal chain of $P$.
	\end{itemize}
In addition, each of the above inequalities is facet defining.
\end{Theorem}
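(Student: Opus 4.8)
The plan is to follow the standard two-step strategy for computing a facet representation: first show that $\Oc^{(e)}_P$ is \emph{contained} in the solution set of the inequalities (a) and (b), and then show that the solution set is contained in $\Oc^{(e)}_P$, with a separate argument that no inequality is redundant. For the containment $\Oc^{(e)}_P \subseteq \{\text{solution set}\}$, since $\Oc^{(e)}_P$ is the convex hull of the lattice points in \eqref{latticeset}, it suffices to check that every vertex $\eb_{F_{\min}}^{\varepsilon}+\eb_{F_{\comin}}$ satisfies each inequality. For a chain $i_1 \lessdot \cdots \lessdot i_r$ with $i_r \in \max(P)$, I would evaluate $2^{r-1}x_{i_1}-\sum_{j=2}^r 2^{r-j}x_{i_j}$ on such a vertex: the key observation is that if $x_{i_1}$ equals $-1$ or $0$ then the left-hand side is at most $\sum_{j\ge2}2^{r-j}\le 2^{r-1}-1<1$ trivially, while if $x_{i_1}=1$ then $i_1\in F$, hence $i_1\in F_{\min}$ forces all $i_j\in F_{\comin}$ so $x_{i_j}=1$ for $j\ge2$, giving left-hand side $=2^{r-1}-(2^{r-1}-1)=1$; and if $i_1\in F$ but $i_1\notin F_{\min}$ then again all $x_{i_j}=1$ and we get exactly $1$ — wait, I should be careful that $x_{i_1}=1$ in the vertex requires $i_1\in F_{\comin}$, which still forces the rest. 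The type (b) inequality $-\sum 2^{r-j}x_{i_j}\le 1$ along a maximal chain is handled similarly: if all $x_{i_j}\ge 0$ the bound is clear, and the only way to make the sum negative is via $x_{i_j}=-1$ which can happen only at a minimal element of $F$, i.e. essentially only at $i_1$ (since for $j\ge 2$, $i_j$ covering $i_{j-1}\in F$ would put $i_j\in F_{\comin}$); this gives left-hand side $\le 2^{r-1}-\sum_{j\ge 2}0 = 2^{r-1}$... this needs the refinement that if $x_{i_1}=-1$ then $i_1\in F_{\min}$, so $i_2,\dots,i_r\in F_{\comin}$ and $x_{i_j}=1$, yielding $2^{r-1}-(2^{r-1}-1)=1$.

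Next, for the reverse containment I would take a point $\xb$ in the solution set of (a),(b) and show $\xb\in\Oc^{(e)}_P$. The cleanest route is to use the description of $\Oc^{(e)}_P$ as a subpolytope of $[-1,1]^n$ combined with Lemma~\ref{lem:latticepoint}'s characterization via left enriched $P$-partitions, but since $\xb$ need not be a lattice point I would instead argue directly: first recover the ``$\pm1,0$'' box constraints $-1\le x_i\le 1$ by specializing (a) and (b) to length-one chains (a single maximal element, resp.\ a single element that is both minimal and maximal) and to appropriate chains through $i$; then show that the order-type constraints forcing the left-enriched shape are implied. Concretely, for a cover $i\lessdot j$ I would combine the inequality (a) for a saturated chain ending at a maximal element and passing through $i\lessdot j$ with the inequality (b) for a maximal chain through $j$ to deduce the pointwise constraints $|x_i|\le x_j$ when $x_j\ge 0$ and more generally $x_i \le \frac{1+x_j+2x_{j'}+\cdots}{2}$ type telescoping bounds — essentially decoding the geometric series. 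An alternative and possibly slicker approach: verify that the vertices of the polytope defined by (a),(b) are exactly the points in \eqref{latticeset}, by a direct combinatorial analysis of which inequalities can be made tight simultaneously, using that the coefficient matrix has a nice triangular block structure along chains.

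For the facet-defining claim, for each inequality in (a) or (b) I would exhibit $n$ affinely independent lattice points of $\Oc^{(e)}_P$ lying on the corresponding hyperplane. For a type-(a) hyperplane attached to a saturated chain $i_1\lessdot\cdots\lessdot i_r$ with $i_r\in\max(P)$, the natural candidates are: the vertex coming from the filter generated by $i_1$ with all signs $+1$; vertices obtained by flipping the sign of a single non-chain minimal element; vertices from extending or truncating filters at coordinates off the chain; and for the chain coordinates themselves, vertices from filters generated by $i_k$ for $k=2,\dots,r$ (whose defining equation still holds because the telescoping collapses). Counting these and checking affine independence should give the required $n$ points. The type-(b) case is analogous using filters that realize $x_{i_1}=-1$.

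The main obstacle I anticipate is the reverse-containment step: showing that an arbitrary real point satisfying the exponentially-weighted inequalities (a) and (b) actually lies in the convex hull \eqref{latticeset}. The weights $2^{r-j}$ mean the inequalities are not just the ``obvious'' ones, and one must show no extra facets are needed and that the box and order constraints are genuinely implied by the chain inequalities — this is where the telescoping/geometric-series bookkeeping is delicate, and where one risks missing a constraint. I would handle it by a careful induction on the length of the longest chain below a given element, or by directly analyzing tight inequalities at a vertex of the candidate polytope and matching them to a filter $F$ together with a sign vector $\varepsilon$ on $F_{\min}$. The affine-independence count for the facet claim is routine by comparison but tedious; I would organize it so that the chain coordinates and the off-chain coordinates are treated by separate, clearly independent families of vertices.
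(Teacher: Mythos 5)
Your outline has the right skeleton (vertices satisfy the inequalities; the solution set is contained in the polytope; each inequality is facet defining), and the first step is essentially the paper's, but the heart of the theorem --- the reverse containment --- is left as an acknowledged open problem in your write-up, and the one concrete strategy you sketch for it is unlikely to succeed. You propose to ``recover the box constraints and then show that the order-type constraints forcing the left-enriched shape are implied.'' But $\Oc^{(e)}_P$ is \emph{not} the solution set of pointwise left-enriched-partition-style constraints: those constraints ($|x_i|\le|x_j|$, etc.) are not linear, and Example~\ref{2chain} already shows that the dilations of $\Oc^{(e)}_P$ do not coincide with the sets of bounded left enriched $P$-partitions, so no argument that reduces membership to such pointwise conditions can close the loop. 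The paper's actual argument is a peeling induction on $n$: given $\xb$ in the candidate polytope $\Qc$, one first extracts $|x_i|\le 1$ from explicit nonnegative linear combinations of the chain inequalities (this is where the weights $2^{r-j}$ telescope); if some minimal element has $x_i=0$ one deletes it and applies induction; otherwise one sets $\lambda=\min\{|x_i| : i\in\min(P)\}>0$, subtracts $\lambda\bigl(\eb_{F_{\min}}^{\epsilon}+\eb_{F_{\comin}}\bigr)$ for the full filter $F=[n]$ with $\epsilon$ matching the signs of the $x_i$ at minimal elements, checks that the remainder $\yb$ satisfies the same inequalities with right-hand side $1-\lambda$, and writes $\xb$ as a convex combination $\lambda(\text{vertex})+(1-\lambda)\cdot\frac{1}{1-\lambda}\yb$ where $\frac{1}{1-\lambda}\yb$ has a vanishing minimal coordinate and so lies in $\Oc^{(e)}_P$ by induction. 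Neither this decomposition nor any workable substitute appears in your proposal, so the containment $\Qc\subset\Oc^{(e)}_P$ is a genuine gap.

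Two further points. In the easy direction your bound ``LHS $\le\sum_{j\ge 2}2^{r-j}\le 2^{r-1}-1<1$'' is false for $r\ge 2$; the correct verification uses that on a vertex at most one chain coordinate can equal $-1$ (a minimal element of the filter $F$, not necessarily $i_1$), and all chain elements above it lie in $F_{\comin}$ and contribute $+1$, so the telescoping identity $2^{s-1}-\sum_{j=2}^{s}2^{s-j}=1$ gives exactly $1$. For the facet claim, exhibiting $n$ affinely independent points per hyperplane is feasible but you only list ``natural candidates''; the paper instead inducts on $n$ by intersecting each face with a coordinate hyperplane $\{x_i=0\}$ for a well-chosen $i\in\min(P)$, identifying the intersection with a facet of $\Oc^{(e)}_{P\setminus\{i\}}$ of dimension $n-2$, and then adjoining one explicit point (such as $(1,\dots,1)$ or $(1,\dots,1)-2\eb_i$) off that hyperplane. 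As written, the proposal establishes only one of the three assertions of the theorem.
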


\begin{proof}
The proof is induction on $n$.
If $n=1$, then the assertion is trivial.
Assume $n \ge 2$.

Let ${\mathcal Q} \subset \RR^n$ be the solution set of the above linear inequalities.
Since 
$2^{s-1} - \sum_{j=2}^{s} 2^{s-j} = 1$
holds for any positive integer $s$,
it is easy to see that
$\eb_{F_{\min}}^{\epsilon} + \eb_{F_{\comin}}$
satisfies (a) and (b) for any filter $F$ of $P$, and
for any $\epsilon \in \{-1,1\}^{|F_{\min}|}$.
Since $\Oc^{(e)}_P$ is the convex hull of such vectors,
we have ${\mathcal Q} \supset \Oc^{(e)}_P$.
In order to prove ${\mathcal Q} \subset \Oc^{(e)}_P$,
let ${\bf x} = (x_1, \dots, x_n) \in {\mathcal Q}$.
First, we will show that $|x_i| \leq 1$ for each $i \in [n]$.
Let $i= i_1 \lessdot i_2 \lessdot \cdots \lessdot i_r$ be a saturated chain of $P$
with $i_r \in \max(P)$.
Then ${\bf x}$ satisfies the following $r$ inequalities:
\begin{align}
2^{r-1} x_{i_1} - \sum_{j=2}^{r} 2^{r-j} x_{i_j} \le&\  1, \tag{$a_1$}\\
2^{r-2} x_{i_2} - \sum_{j=3}^{r} 2^{r-j} x_{i_j} \le& \ 1,\tag{$a_2$}\\
  \vdots & \nonumber \\
 x_{i_r} \le&\  1.  \tag{$a_r$}
\end{align}
If $r=1$, then $x_i \le 1$ is trivial.
Let $r \ge 2$.
Then the inequality given by a linear combination
$(a_1) +  (a_2) + 2 (a_3) + \dots + 2^{r-2}(a_r)$
of the above inequalities
is $2^{r-1} x_{i_1}  \le 2^{r-1}$, and hence
$x_i = x_{i_1} \le 1$.
Suppose that $i$ belongs to a maximal chain $i_1 \lessdot i_2 \lessdot \cdots \lessdot i_r$, say, $i = i_k$.
Then ${\bf x}$ satisfies $(a_1), \dots, (a_r)$ above and
\begin{align}
 - \sum_{j=1}^{r} 2^{r-j} x_{i_j} \le& 1. \tag{$b_1$}
\end{align}
Then the inequality given by a linear combination
$$(b_1)+ (a_1) +  2 (a_2) +  \dots + 2^{k-2}(a_{k-1}) + 2^{k-1}(a_{k+1}) + \dots +  2^{r-2}(a_r) $$
of the above inequalities
is $- 2^{r-1} x_{i_k}  \le 2^{r-1}$, and hence
we have $x_i = x_{i_k} \ge -1$.

We now prove that ${\bf x}$ belongs to $\Oc^{(e)}_P$
by induction on $n$.
Suppose that $x_i = 0$ for some $i \in \min(P)$.
Then $(x_1,\dots,x_{i-1},x_{i+1},\dots,x_n) \in \RR^{n-1}$ satisfies inequalities (a) and (b) for 
the subposet $P \setminus \{i\}$ of $P$.
By the assumption of induction,  $(x_1,\dots,x_{i-1},x_{i+1},\dots,x_n)$
belongs to $\Oc^{(e)}_{P \setminus \{i\}}$.
It then follows that ${\bf x}$ belongs to $\Oc^{(e)}_P$.
Thus we may assume that $x_i \neq 0$ for any $i \in \min(P)$.
Let $
\lambda = \min \{ |x_i| :  i \in \min(P)  \}
$.
Note that $0 < \lambda \le 1$.
Let 
$$
{\bf y} = (y_1,\dots,y_n) = {\bf x} - \lambda
( \eb_{F_{\min}}^{\epsilon} + \eb_{F_{\comin}}),
$$
where $F = [n]$, and
 $\epsilon \in \{-1,1\}^{|F_{\min} |}$ corresponds to the sign of $x_i$ for each
$ i \in \min(P) = F_{\min} $.
We now show that
the vector ${\bf y}$ satisfies 
	\begin{itemize}
		\item[{\rm (c)}] $2^{r-1} y_{i_1} - \sum_{j=2}^{r} 2^{r-j} y_{i_j} \le 1 - \lambda$
, where $i_1 \lessdot i_2 \lessdot \cdots \lessdot i_r$ is a saturated chain of $P$ with $i_r \in \max(P)${\rm ;}
		\item[{\rm (d)}]  $-2^{r-1} y_{i_1} - \sum_{j=2}^{r} 2^{r-j} y_{i_j} \le 1 - \lambda$
, where $i_1 \lessdot i_2 \lessdot \cdots \lessdot i_r$ is a maximal chain of $P$.
	\end{itemize}

\bigskip

\noindent
\underline{Inequality (c).}
If either $x_{i_1} >0$ or $i_1 \notin \min(P)$ holds,
then
$$
2^{r-1} y_{i_1} - \sum_{j=2}^{r} 2^{r-j} y_{i_j} 
=
2^{r-1} (x_{i_1}-\lambda) - \sum_{j=2}^{r} 2^{r-j} (x_{i_j}-\lambda)
\le 1 - \lambda.$$
If $x_{i_1} < 0$ and $i_1 \in \min(P)$, then
$\lambda + x_{i_1} \le 0$ and hence
\begin{eqnarray*}
2^{r-1} y_{i_1} - \sum_{j=2}^{r} 2^{r-j} y_{i_j} 
&=&
2^{r-1} (x_{i_1} + \lambda) - \sum_{j=2}^{r} 2^{r-j} (x_{i_j}-\lambda)\\
&=&
(2^r-1) \lambda +2^{r-1} x_{i_1} - \sum_{j=2}^{r} 2^{r-j}  x_{i_j}\\
&=&
2^r (\lambda + x_{i_1} ) - \lambda - 2^{r-1} x_{i_1} - \sum_{j=2}^{r} 2^{r-j}  x_{i_j}\\
&\le& 1 - \lambda.
\end{eqnarray*}

\bigskip

\noindent
\underline{Inequality (d).}
If $x_{i_1} < 0$, then we have
$$
- \sum_{j=1}^{r} 2^{r-j} y_{i_j} 
=
-2^{r-1} (x_{i_1}+\lambda) - \sum_{j=2}^{r} 2^{r-j} (x_{i_j}-\lambda)
\le 1 - \lambda.$$
If $x_{i_1} > 0$, then $\lambda - x_{i_1} \le 0$ and hence
\begin{eqnarray*}
- \sum_{j=1}^{r} 2^{r-j} y_{i_j} 
&=&
-2^{r-1} (x_{i_1} - \lambda) - \sum_{j=2}^{r} 2^{r-j} (x_{i_j}-\lambda)\\
&=&
(2^r-1) \lambda  -2^{r-1} x_{i_1} - \sum_{j=2}^{r} 2^{r-j}  x_{i_j}\\
&=&
2^r (\lambda - x_{i_1} ) - \lambda + 2^{r-1} x_{i_1} - \sum_{j=2}^{r} 2^{r-j}  x_{i_j}\\
&\le& 1 - \lambda.
\end{eqnarray*}

\bigskip

If $\lambda =1$, then we have ${\bf y}={\bf 0}$
by inequalities (c) and (d).
Hence ${\bf x} =
\eb_{F_{\min}}^{\epsilon} + \eb_{F_{\comin}}
\in  \Oc^{(e)}_P$.
If $\lambda \neq 1$, then
$\frac{1}{1-\lambda} {\bf y}$ 
belongs to ${\mathcal Q}$
by inequalities (c) and (d).
From the definition of $\lambda$, 
there exists $i \in \min(P)$ such that $y_i=0$.
By the assumption of induction, $\frac{1}{1-\lambda} {\bf y}$ 
belongs to $\Oc^{(e)}_P$, and hence ${\bf y}$
belongs to $(1 - \lambda) \Oc^{(e)}_P$.
Thus ${\bf x} = \lambda
( \eb_{F_{\min}}^{\epsilon} + \eb_{F_{\comin}})
+
{\bf y}$ belongs to $\lambda \Oc^{(e)}_P + (1 - \lambda) \Oc^{(e)}_P
= \Oc^{(e)}_P$.

Finally, we will prove that each of inequalities (a) and (b)
 is facet defining.
Let 
$$
{\mathcal H}_{i_1   i_2   \cdots   i_r}^+
=
\left\{(x_1,\dots, x_n) \in \RR^n : 
2^{r-1} x_{i_1} - \sum_{j=2}^{r} 2^{r-j} x_{i_j} = 1\right\},
$$
where $i_1 \lessdot i_2 \lessdot \cdots \lessdot i_r$ 
is a saturated chain of $P$ with $i_r \in \max(P)$, and let
$$
{\mathcal H}_{i_1   i_2   \cdots   i_r}^-
=
\left\{(x_1,\dots, x_n) \in \RR^n :  - \sum_{j=1}^{r} 2^{r-j} x_{i_j}  = 1\right\},
$$
where $i_1 \lessdot i_2 \lessdot \cdots \lessdot i_r$ 
is a maximal chain of $P$.
It is enough to show that 
$$\dim (\Oc^{(e)}_P \cap {\mathcal H}_{i_1   i_2   \cdots   i_r}^+)= \dim (\Oc^{(e)}_P \cap {\mathcal H}_{i_1   i_2   \cdots   i_r}^-) = n-1.$$

Let $i_1 \lessdot i_2 \lessdot \cdots \lessdot i_r$ 
be a saturated chain of $P$ with $i_r \in \max(P)$.
If $\min(P) = \{i_1\}$, then let $i = i_1$.
If $\min(P) \neq \{i_1\}$, then let $i$ be an arbitrary element in $\min (P) \setminus \{i_1\}$.
Note that, if $\min(P) = \{i_1\}$, then 
$i_2 \lessdot i_3 \lessdot \cdots \lessdot i_r$
is a maximal chain of $P \setminus \{i\}$.
Let ${\mathcal H}' = \{ (x_1, \dots, x_n) \in \RR^n : x_i =0 \}$.
Then
\begin{eqnarray*}
 & & \Oc^{(e)}_P \cap {\mathcal H}_{i_1   i_2   \cdots   i_r}^+
\cap {\mathcal H}' \\
&=&
\left\{
\begin{array}{lc}
(\Oc^{(e)}_P \cap {\mathcal H}' ) \cap 
\{(x_1,\dots, x_n) \in \RR^n : x_i = 0, 
2^{r-2} x_{i_1} - \sum_{j=2}^{r} 2^{r-j} x_{i_j} = 1\} & \mbox{ if } i \neq i_1,\\
\\
(\Oc^{(e)}_P \cap {\mathcal H}' ) \cap 
\{(x_1,\dots, x_n) \in \RR^n : x_i = 0, 
- \sum_{j=2}^{r} 2^{r-j} x_{i_j} = 1\} &  \mbox{ if } i = i_1
\end{array}
\right.
\end{eqnarray*}
is unimodularly equivalent to a facet of $\Oc^{(e)}_{P \setminus \{i\}} $ by the assumption of induction.
Hence $\dim (\Oc^{(e)}_P \cap {\mathcal H}_{i_1   i_2   \cdots   i_r}^+
\cap {\mathcal H}' ) = n-2$.
Since $(1,\dots,1) \in \RR^n$ belongs to
$(\Oc^{(e)}_P \cap {\mathcal H}_{i_1   i_2   \cdots   i_r}^+) \setminus {\mathcal H}' $,
we have $\dim (\Oc^{(e)}_P \cap {\mathcal H}_{i_1   i_2   \cdots   i_r}^+)= n-2+1 = n-1.$
On the other hand, for a maximal chain 
 $i_1 \lessdot i_2 \lessdot \cdots \lessdot i_r$ of $P$,
let $i = i_1$ if $\min(P) = \{i_1\}$, and let $i$ be an arbitrary element in $\min (P) \setminus \{i_1\}$ otherwise.
Note that, if $\min(P) = \{i_1\}$, then 
$i_2 \lessdot i_3 \lessdot \cdots \lessdot i_r$
is a maximal chain of $P \setminus \{i\}$.
Then
\begin{eqnarray*}
 & & \Oc^{(e)}_P \cap {\mathcal H}_{i_1   i_2   \cdots   i_r}^-
\cap {\mathcal H}' \\
&=&
\left\{
\begin{array}{lc}
(\Oc^{(e)}_P \cap {\mathcal H}' ) \cap 
\{(x_1,\dots, x_n) \in \RR^n : x_i = 0, 
 - \sum_{j=1}^{r} 2^{r-j} x_{i_j}  = 1\} & \mbox{ if } i \neq i_1,\\
\\
(\Oc^{(e)}_P \cap {\mathcal H}' ) \cap 
\{(x_1,\dots, x_n) \in \RR^n : x_i = 0, 
- \sum_{j=2}^{r} 2^{r-j} x_{i_j} = 1\} &  \mbox{ if } i = i_1
\end{array}
\right.
\end{eqnarray*}
is unimodularly equivalent to a facet of $\Oc^{(e)}_{P \setminus \{i\}} $ by the assumption of induction.
Hence $\dim (\Oc^{(e)}_P \cap {\mathcal H}_{i_1   i_2   \cdots   i_r}^-
\cap {\mathcal H}' ) = n-2$.
Since $(1,\dots,1) - 2 {\bf e}_i \in \RR^n$ belongs to
$(\Oc^{(e)}_P \cap {\mathcal H}_{i_1   i_2   \cdots   i_r}^-) \setminus {\mathcal H}' $,
we have $\dim (\Oc^{(e)}_P \cap {\mathcal H}_{i_1   i_2   \cdots   i_r}^-)= n-2+1 = n-1,$
as desired.
\end{proof}

Given a polytope $\Pc$ of dimension $n$, let $f_{n-1} (\Pc)$
be the number of the facets of $\Pc$.
It is known \cite[Corollary~1.2]{uniequiv} that
$f_{n-1} ( \Oc_P) \leq f_{n-1} ( \Cc_P)$
for any poset $P$.

\begin{Corollary}
\label{numberoffacets}
Let $P$ be a finite poset on $[n]$.
Then we have the following{\rm :}
\begin{itemize}
\item[{\rm (a)}]
Let $sc(P)$ (resp.~$mc(P)$) be the number of saturated (resp.~maximal) chains 
of $P$ that contains a maximal element of $P$.
Then $f_{n-1} ( \Oc^{(e)}_P) = sc(P) + mc(P)$.
\item[{\rm (b)}]
Let $mc_\ell(P)$ be the number of maximal chains of $P$ of length $\ell$.
Then $f_{n-1} (\Cc^{(e)}_P)=\sum_{\ell=0}^{n-1} 2^{\ell+1} mc_\ell(P)$.
\end{itemize}
Moreover, we have $f_{n-1} ( \Oc^{(e)}_P) \leq f_{n-1} ( \Cc^{(e)}_P)$.
\end{Corollary}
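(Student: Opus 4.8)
The plan is to read the facet counts directly off the two facet descriptions already established (Theorem~\ref{facetsofO} for $\Oc^{(e)}_P$, Proposition~\ref{facetsofC} for $\Cc^{(e)}_P$) and then compare them. The key preliminary remark is that the origin lies in the interior of each of $\Oc^{(e)}_P$ and $\Cc^{(e)}_P$ (this holds for $\Oc^{(e)}_P$ by Lemma~\ref{lem:latticepoint}, and for $\Cc^{(e)}_P$ since it is reflexive), so every facet lies on a unique hyperplane $\langle \ab,\xb\rangle=1$; thus counting facets amounts to counting the distinct coefficient vectors $\ab$ occurring among the facet-defining inequalities in those statements.

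For part (a): from the coefficient vector of a type-(a) inequality $2^{r-1}x_{i_1} - \sum_{j=2}^{r} 2^{r-j}x_{i_j} \le 1$ one recovers the saturated chain, since its support is $\{i_1,\dots,i_r\}$, its unique positive entry sits at $i_1$, and the remaining indices are then read off in decreasing absolute value of their (negative) coefficients. From a type-(b) vector $-\sum_{j=1}^{r}2^{r-j}x_{i_j} \le 1$ one similarly recovers the maximal chain, because the absolute values $2^{r-1}>\cdots>1$ on its support are pairwise distinct. A type-(a) vector has a positive entry while a type-(b) vector does not, so the two families are disjoint, and within each family distinct chains yield distinct vectors. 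Finally, a saturated chain contains a maximal element of $P$ if and only if its top element lies in $\max(P)$ (a strictly smaller element of a chain cannot be maximal in $P$), so the type-(a) inequalities are indexed by the $sc(P)$ saturated chains topped by a maximal element and the type-(b) inequalities by the $mc(P)$ maximal chains; hence $f_{n-1}(\Oc^{(e)}_P)=sc(P)+mc(P)$.

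For part (b): by Proposition~\ref{facetsofC} every facet of $\Cc^{(e)}_P$ is cut out by $\sum_{j=1}^{r}\varepsilon_j x_{i_j}\le 1$ for a maximal chain $\{i_1,\dots,i_r\}$ and $\varepsilon\in\{1,-1\}^r$, and from the coefficient vector one recovers both the chain (its support) and $\varepsilon$ (its entries); so the facets are in bijection with pairs (maximal chain, sign vector on it). A maximal chain of length $\ell$ has $\ell+1$ elements, hence contributes $2^{\ell+1}$ pairs, giving $f_{n-1}(\Cc^{(e)}_P)=\sum_{\ell=0}^{n-1}2^{\ell+1}mc_\ell(P)$. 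For the final inequality: each saturated chain counted by $sc(P)$ ends at an element of $\max(P)$, so extending it downwards through covering relations to a minimal element exhibits it as a terminal segment of some maximal chain, and conversely every terminal segment of a maximal chain is such a chain; since a maximal chain of length $\ell$ has exactly $\ell+1$ terminal segments, $sc(P)\le\sum_\ell(\ell+1)mc_\ell(P)$, so with $mc(P)=\sum_\ell mc_\ell(P)$ we obtain $f_{n-1}(\Oc^{(e)}_P)\le\sum_\ell(\ell+2)mc_\ell(P)\le\sum_{\ell=0}^{n-1}2^{\ell+1}mc_\ell(P)=f_{n-1}(\Cc^{(e)}_P)$, using $\ell+2\le 2^{\ell+1}$ for all $\ell\ge 0$. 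The only step that is more than bookkeeping is the distinctness of the listed facet-defining inequalities in (a) and (b), together with the disjointness of the two families in (a): this is precisely what upgrades "each inequality is facet defining'' to an exact facet count, and once it is in hand the comparison reduces to the trivial numerical inequality $\ell+2\le 2^{\ell+1}$.
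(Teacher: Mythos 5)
Your proof is correct and follows the same route as the paper: read the counts off Proposition~\ref{facetsofC} and Theorem~\ref{facetsofO}, note that each maximal chain of length $\ell$ yields $\ell+1$ terminal saturated chains ending in $\max(P)$, and conclude via $\ell+2\le 2^{\ell+1}$. The only difference is that you make explicit the distinctness of the listed facet-defining inequalities (via uniqueness of the normal vector normalized to right-hand side $1$, since the origin is interior), which the paper leaves implicit.
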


\begin{proof}
The formulas of the number of facets follows from Proposition~\ref{facetsofC} and Theorem~\ref{facetsofO}.
Each maximal chain of $P$ of length $\ell$ contains exactly
$\ell+1$ saturated chains of $P$ that contains a maximal element of $P$.
Since $\ell + 2 \le 2^{\ell+1}$ for any integer $\ell \ge 0$, we have 
$sc(P) + mc(P) \leq \sum_{\ell=0}^{n-1} 2^{\ell+1} mc_\ell(P)$.
\end{proof}

In \cite[Lemma 3.8]{HLLMT}, tight upper bounds for 
$f_{n-1}(\Oc_P) $ and $f_{n-1}(\Cc_P) $ are given.
Given an integer $n\ge 2$, let 
$$
\mu_n=
\left\{
\begin{array}{cl}
3^k& \mbox{ if } n =3k,\\
4 \cdot 3^{k -1} & \mbox{ if } n =3k+1,\\
2 \cdot 3^k& \mbox{ if } n=3k+2.
\end{array}
\right.
$$
It is known \cite[Theorem 1]{MM} that $\mu_n$ is the maximum number of cliques
possible in a graph with $n$ vertices.

\begin{Proposition}[{\cite[Lemma 3.8]{HLLMT}}]
Let $P$ be a finite poset on $[n]$ with $n \ge 5$.
Then we have $f_{n-1}(\Cc_P) \le \mu_n + n$, and 
$f_{n-1}(\Oc_P) \le \lfloor \frac{n+1}{2} \rfloor ( n - \lfloor \frac{n+1}{2} \rfloor  ) +n.
$
In addition, both upper bounds are tight.
\end{Proposition}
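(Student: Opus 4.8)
The plan is to treat the two inequalities separately, reducing each to a classical extremal bound applied to a graph naturally attached to $P$. Throughout I use Stanley's facet descriptions \cite{twoposetpolytopes}: the facets of $\Cc_P$ are the inequalities $x_i\ge 0$ $(i\in[n])$ together with $\sum_{i\in C}x_i\le 1$ for the maximal chains $C$ of $P$, so that $f_{n-1}(\Cc_P)=n+mc$, where $mc$ is the number of maximal chains of $P$; and the facets of $\Oc_P$ are $x_i\ge 0$ $(i\in\min(P))$, $x_i\le 1$ $(i\in\max(P))$, and $x_i\le x_j$ for the covering relations $i\lessdot j$, so that $f_{n-1}(\Oc_P)=|\min(P)|+|\max(P)|+\gamma$, where $\gamma$ is the number of covering relations of $P$.

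For the chain polytope, the chains of $P$ are precisely the cliques of the comparability graph $G_P$ of $P$, hence the maximal chains of $P$ are precisely the inclusion-maximal cliques of $G_P$. By \cite[Theorem~1]{MM} a graph on $n$ vertices has at most $\mu_n$ inclusion-maximal cliques, so $mc\le\mu_n$ and $f_{n-1}(\Cc_P)\le n+\mu_n$. For tightness, choose positive integers $n_1,\dots,n_k$ with $\sum n_j=n$ and $\prod n_j=\mu_n$ (all equal to $3$, or $(4,3,\dots,3)$, or $(2,3,\dots,3)$ according as $n\equiv 0,1,2\pmod 3$), and let $P$ be obtained by stacking antichains $A_1,\dots,A_k$ of these sizes, every element of $A_j$ below every element of $A_{j+1}$. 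Then $G_P$ is the Moon--Moser extremal complete multipartite graph, whose inclusion-maximal cliques are the transversals of the parts, so $mc=\prod n_j=\mu_n$, and since every $x_i\ge 0$ is facet defining, $f_{n-1}(\Cc_P)=n+\mu_n$.

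For the order polytope, the crucial observation is that the underlying graph $H_P$ of the Hasse diagram of $P$ is triangle free: three elements pairwise joined by covering relations would form a chain $a<b<c$, and then $a\lessdot c$ would fail. Let $t=|\min(P)\cap\max(P)|$, the number of isolated vertices of $H_P$. Then $|\min(P)|+|\max(P)|=|\min(P)\cup\max(P)|+t\le n+t$, while every edge of $H_P$ lies among its $n-t$ non-isolated vertices, so $\gamma\le\lfloor(n-t)^2/4\rfloor$ since a triangle-free graph on $m$ vertices has at most $\lfloor m^2/4\rfloor$ edges (Mantel's theorem). Hence $f_{n-1}(\Oc_P)\le n+t+\lfloor(n-t)^2/4\rfloor$, and since $\lfloor n^2/4\rfloor-\lfloor(n-t)^2/4\rfloor\ge\lfloor t(2n-t)/4\rfloor\ge t$ for all $0\le t\le n$ when $n\ge 5$, this yields $f_{n-1}(\Oc_P)\le n+\lfloor n^2/4\rfloor=n+\lfloor\frac{n+1}{2}\rfloor\bigl(n-\lfloor\frac{n+1}{2}\rfloor\bigr)$. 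For tightness, take for $P$ the height-two poset whose $\lfloor n/2\rfloor$ minimal elements all lie below its $\lceil n/2\rceil$ maximal elements; then $H_P=K_{\lfloor n/2\rfloor,\lceil n/2\rceil}$, so $\gamma=\lfloor n^2/4\rfloor$ and $|\min(P)|+|\max(P)|=n$, giving equality.

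The facet descriptions and the two explicit extremal computations are routine; the genuinely delicate point is the order-polytope upper bound. Bounding $|\min(P)|+|\max(P)|$ and $\gamma$ separately by their individual maxima ($2n$, attained only by antichains, and $\lfloor n^2/4\rfloor$) overshoots the target, so one must exploit that the elements responsible for $|\min(P)|+|\max(P)|>n$ are exactly the isolated ones, and that each isolated element simultaneously removes a vertex from the support of $H_P$; the elementary floor inequality above is where that trade-off is cashed in, and I would check it with care in the regime where $t$ is close to $n$. (Alternatively, one may pass to $\widehat P=P\cup\{\widehat 0,\widehat 1\}$, observe that $f_{n-1}(\Oc_P)$ is the number of edges of the triangle-free Hasse graph of $\widehat P$ on $n+2$ vertices, and invoke the equality case of Mantel's theorem, noting that for $n\ge 5$ this graph cannot be balanced complete bipartite since $\widehat P$ has a unique minimum and a unique maximum.) For the chain polytope the only subtlety is that the Moon--Moser extremal graphs must be realizable as comparability graphs, which holds because complete multipartite graphs are the comparability graphs of stacked antichains.
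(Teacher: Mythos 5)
The paper does not prove this Proposition at all: it is quoted verbatim from \cite[Lemma 3.8]{HLLMT} and used only as a point of comparison for the enriched bounds of Proposition~\ref{upperbound}, so there is no in-paper argument to measure you against. Your self-contained proof is, as far as I can check, correct. The facet counts $f_{n-1}(\Cc_P)=n+mc(P)$ and $f_{n-1}(\Oc_P)=|\min(P)|+|\max(P)|+c(P)$ are Stanley's; the reduction of $mc(P)$ to maximal cliques of the comparability graph and the appeal to Moon--Moser, together with the complete multipartite (ordinal sum of antichains) extremal example, settle the chain polytope cleanly. For the order polytope, the triangle-freeness of the Hasse graph, Mantel's theorem on the $n-t$ non-isolated vertices, and the bound $|\min(P)|+|\max(P)|\le n+t$ give $f_{n-1}(\Oc_P)\le n+t+\lfloor (n-t)^2/4\rfloor$, and the trade-off inequality you flag does hold: writing $g(t)=t+\lfloor (n-t)^2/4\rfloor$, one has $g(t)-g(t+1)=\lfloor (n-t)/2\rfloor-1\ge 0$ for $t\le n-2$, while $g(n-1)=n-1$ and $g(n)=n\le\lfloor n^2/4\rfloor$ once $n\ge 4$, so the maximum is $g(0)=\lfloor n^2/4\rfloor=\lfloor\frac{n+1}{2}\rfloor\bigl(n-\lfloor\frac{n+1}{2}\rfloor\bigr)$; the bipartite poset gives equality. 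It is worth noting that your route is genuinely different in flavor from what the paper does for the \emph{enriched} analogue (Proposition~\ref{upperbound}): there the authors peel off successive layers of maximal elements and optimize a product-sum expression over compositions of $n$, rather than invoking Mantel or Moon--Moser; your graph-theoretic argument is shorter for the classical polytopes but does not obviously adapt to the enriched case, where the facet count is governed by saturated chains rather than cover relations.
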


We give tight upper bounds for the number of facets of enriched order and chain polytopes.

\begin{Proposition}
\label{upperbound}
Let $P$ be a finite poset on $[n]$.
Then we have $f_{n-1} (\Cc^{(e)}_P) \le  2^n$
and
$$
f_{n-1}(\Oc^{(e)}_P) \le 
\left\{
\begin{array}{ll}
2n  & \mbox{ if } n = 1,2,3,\\
\\
\frac{47}{2} \cdot 3^{k-2} -\frac{3}{2} & \mbox{ if } n = 3k \ (k \ge 2),\\
\\
\frac{23}{2} \cdot 3^{k-1} -\frac{3}{2} &\mbox{ if }  n = 3k+1 \ (k \ge 1),\\
\\
\frac{11}{2} \cdot 3^k -\frac{3}{2}&\mbox{ if }  n = 3k+2 \ (k \ge 1).
\end{array}
\right.
$$
In addition, both upper bounds are tight.
\end{Proposition}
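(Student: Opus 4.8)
The plan is to start from the facet counts in Corollary~\ref{numberoffacets}. For the chain polytope, Corollary~\ref{numberoffacets}(b) gives $f_{n-1}(\Cc^{(e)}_P)=\sum_C 2^{|C|}$, the sum over all maximal chains $C$ of $P$ (a maximal chain of length $\ell$ having $\ell+1$ elements). I would prove $\sum_C 2^{|C|}\le 2^{|P|}$ by induction on $|P|$: fix a maximal element $m$, and note that the maximal chains through $m$ biject, via deletion of $m$, with the maximal chains of the subposet $P_{<m}=\{x\in P: x<_P m\}$ (each cardinality increasing by $1$), while the maximal chains avoiding $m$ form a subset of the maximal chains of $P\setminus\{m\}$, and this subset is empty when $m$ is the unique maximal element. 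If $m$ is the unique maximal element then $P_{<m}=P\setminus\{m\}$ and $\sum_C 2^{|C|}=2\sum_{C'\subseteq P_{<m}}2^{|C'|}\le 2\cdot 2^{|P|-1}=2^{|P|}$; otherwise $|P_{<m}|\le |P|-2$ and $\sum_C 2^{|C|}\le 2\cdot 2^{|P_{<m}|}+2^{|P|-1}\le 2^{|P|-1}+2^{|P|-1}=2^{|P|}$. Equality is attained when $P$ is a chain, so the bound $2^n$ is tight.

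For the order polytope, Corollary~\ref{numberoffacets}(a) reduces the problem to maximising $sc(P)+mc(P)$ over posets $P$ on $n$ elements. I would read $sc(P)$ and $mc(P)$ off the Hasse diagram of $P$, viewed as a directed acyclic graph $H$: $mc(P)$ is the number of directed paths in $H$ from a source to a sink, and $sc(P)$ is the number of directed paths in $H$ ending at a sink. The heart of the argument is a reduction to \emph{layered} posets $P(a_1,\dots,a_r)$ --- those whose Hasse diagram is obtained from consecutive ranks $L_1,\dots,L_r$, with $|L_i|=a_i\ge 1$, by joining every element of $L_i$ to every element of $L_{i+1}$. Concretely I would show that an arbitrary poset can be transformed, without decreasing $sc+mc$, first into a graded poset, then (rank by rank) into one that is complete bipartite between consecutive ranks, and finally into one in which the rank sizes are nondecreasing from bottom to top. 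For such $P(a_1,\dots,a_r)$ the path recursions $p(L_1)=1$, $p(L_{i+1})=a_i\,p(L_i)$, $q(L_1)=1$, $q(L_{i+1})=1+a_i\,q(L_i)$ give the closed form
\[
f_{n-1}\bigl(\Oc^{(e)}_{P(a_1,\dots,a_r)}\bigr)=2\prod_{i=1}^{r}a_i+\sum_{j=1}^{r-1}\ \prod_{l=j+1}^{r}a_l .
\]
It then remains to maximise the right-hand side over all compositions $(a_1,\dots,a_r)$ of $n$: one shows the optimal composition consists only of parts $2$ and $3$, arranged in nondecreasing order, so that the residue of $n$ modulo $3$ pins down the multiset of parts --- namely $\{2,2,2\}\cup\{3\}^{k-2}$ when $n=3k$, $\{2,2\}\cup\{3\}^{k-1}$ when $n=3k+1$, and $\{2\}\cup\{3\}^{k}$ when $n=3k+2$ (for $n\ge 4$; for $n\le 3$ the single-rank poset, i.e.\ the antichain, is optimal with $2n$ facets). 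Substituting these back into the closed form yields the stated piecewise expression, and the associated layered posets show it is tight.

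The main obstacle is the reduction to layered posets. In contrast with the chain-polytope induction, $sc(P)+mc(P)$ does not decompose cleanly when a single element is removed, so one must argue directly that any Hasse diagram can be rearranged into a layered complete-multipartite shape without ever losing a path ending at a sink or a source-to-sink path; establishing the successive moves (gradedness, completion between ranks, monotone rank sizes) and checking that each is non-decreasing for $sc+mc$ is the technical core. Once the problem is reduced to the explicit one-parameter-per-rank optimisation, the remaining analysis of $2\prod a_i+\sum_{j}\prod_{l>j}a_l$ subject to $\sum a_i=n$ is elementary, if slightly case-heavy, and splits into the three residues of $n$ modulo $3$ in parallel with the Moon--Moser-type bounds quoted above.
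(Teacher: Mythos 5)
Your bound for $f_{n-1}(\Cc^{(e)}_P)$ is correct, and it is essentially the paper's argument (the paper peels off the entire set of minimal elements at once rather than a single maximal element, but the induction is the same in spirit); the tightness via a chain also matches. The problem is the order-polytope half. There you reduce the maximization of $sc(P)+mc(P)$ to the layered posets $P(a_1,\dots,a_r)$ and then optimize over compositions of $n$; the closed form, the restriction to parts in $\{2,3\}$ sorted by size, and the extremal ordinal sums of antichains all agree with the paper. But the reduction itself --- transforming an arbitrary poset into a graded one, then completing the bipartite graphs between consecutive ranks, then sorting the rank sizes, all without decreasing $sc+mc$ --- is precisely the step you flag as ``the technical core,'' and you give no argument for it. As written this is a genuine gap: the first move is not even well defined (which graded poset on the same $n$ elements replaces, say, the poset with relations $x\lessdot y$ and $x\lessdot z\lessdot w$?), and it is not evident that each move can be realized so as never to lose more paths ending at a sink, or source-to-sink paths, than it gains. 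Without this, the displayed piecewise bound is only verified for layered posets, not for all $P$.

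The paper sidesteps the structural reduction entirely. Setting $P_1=P$, $M_i=\max(P_i)$ and $P_{i+1}=P_i\setminus M_i$ until $P_r$ is an antichain, it bounds
$$
f_{n-1}(\Oc^{(e)}_P)\ \le\ |M_1|+|M_1||M_2|+\cdots+|M_1|\cdots|M_{r-1}|+2\,|M_1|\cdots|M_r|
$$
directly for an arbitrary poset, and then maximizes $2\prod_{k=1}^{r}m_k+\sum_{j=1}^{r-1}\prod_{k=1}^{j}m_k$ over all compositions $m_1+\cdots+m_r=n$ (a swapping argument gives $m_1\ge\cdots\ge m_r$, a Moon--Moser-type splitting gives $m_1\le 3$, one checks $m_r\ge 2$ and that at most three parts equal $2$). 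This is the same optimization you set up --- your suffix products $\prod_{l>j}a_l$ become the paper's prefix products after reversing the indexing of layers --- but the counting inequality replaces your three unproven moves and makes the bound valid for every $P$ at once; small $n$ is handled by direct computation. To salvage your route you would need to actually establish that each of your moves is $sc+mc$-nondecreasing, which appears substantially harder than the paper's counting estimate.
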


\begin{proof}
The proof for $\Cc^{(e)}_P$ is induction on $n$.
If $n=1$, then $\Cc^{(e)}_P$ has two facets.
Let $n \ge 2$ and let $M$ be the set of all minimal elements of $P$.
If $|M| = m$, then we have
$$
f_{n-1} (\Cc^{(e)}_P) \le 2 m f_{n-m-1} (\Cc^{(e)}_{P \setminus M})  \le  2^{n-m+1} m \le 2^n
$$
by the assumption of induction.
Note that $f_{n-1} (\Cc^{(e)}_P) = 2^n$ if $P$ is a chain.

By explicit computation, 
for $n=1, 2,3,4$, the maximum value of the number of facets 
of $\Oc^{(e)}_P$ is $2$, $4$, $6$, $10$, respectively.
(Note that $f_{n-1} (\Oc^{(e)}_P)  = 2n$ if $P$ is an antichain.)
Thus the assertion for $\Oc^{(e)}_P$ holds for $n \le 4$.
Assume $n \ge 5$.
Let $P$ be a poset on $[n]$.
Let $P_1 = P$ and 
let $M_1$ be the set of all maximal elements of $P_1$.
If $P_1$ is not an antichain, then let $P_2 = P_1 \setminus M_1$ and let $M_2$ be the set of all maximal elements of $P_2$.
In general, if $P_i$ is not an antichain, then
$P_{i+1} = P_i \setminus M_i$ and let $M_{i+1}$ be the set of all maximal elements of $P_{i+1}$.
By this procedure, we get a sequence of posets
$P_1, \dots, P_r$ such that $P_r$ is an antichain.
Then we have
$$
f_{n-1} (\Oc^{(e)}_P) \le 
|M_1| + |M_1| |M_2|+ \cdots + 
|M_1| |M_2| \cdots |M_{r-1}|
+ 
2 |M_1| |M_2| \cdots |M_r|.
$$
We show that 
\begin{equation}
\label{ue}
\max
\left\{
2  m_1 m_2  \cdots m_r
+
\sum_{j=1}^{r-1}
\prod_{k=1}^j m_k
:
1 \le r \le n, \sum_{j=1}^r m_j = n, 1 \le m_i \in \ZZ 
\right\}
\end{equation}
is equal to 
$$
\left\{
\begin{array}{ll}
\frac{47}{2} \cdot 3^{k-2} -\frac{3}{2} & \mbox{ if } n = 3k,\\
\\
\frac{23}{2} \cdot 3^{k-1} -\frac{3}{2} &\mbox{ if }  n = 3k+1,\\
\\
\frac{11}{2} \cdot 3^k -\frac{3}{2}&\mbox{ if }  n = 3k+2,
\end{array}
\right.
$$
for $n \ge 5$.
Suppose that $m_1, \dots, m_r$, where 
 $1 \le r \le n$, $\sum_{j=1}^r m_j = n$, and $1 \le m_i \in \ZZ$ give the maximum value of (\ref{ue}). 
If $m_i < m_{i+1}$ for some $i$, then
$$
2  m_1 m_2  \cdots m_r
+
\sum_{j=1}^{r-1}
\prod_{k=1}^j m_k
<
2  m_1' m_2'  \cdots m_r'
+
\sum_{j=1}^{r-1}
\prod_{k=1}^j m_k',
$$
where $(m_i',m_{i+1}') = (m_{i+1},m_i)$ and
$m_k' = m_k$ if $k \notin \{i,i+1\}$.
This is a contradiction.
Hence we have $m_1 \ge m_2 \ge \cdots \ge m_r$.
If $m_1 \ge 4$, then 
$$
m_1 \le \left\lfloor \frac{m_1+1}{2} \right\rfloor
\left(
m_1-
\left\lfloor \frac{m_1+1}{2} \right\rfloor
\right).
$$
Hence 
$$
2  m_1 m_2  \cdots m_r
+
\sum_{j=1}^{r-1}
\prod_{k=1}^j m_k
<
2  m_0' m_1'  \cdots m_r'
+
\sum_{j=0}^{r-1}
\prod_{k=0}^j m_k',
$$
where $m_0' = \left\lfloor \frac{m_1+1}{2} \right\rfloor$,
$m_1' = m_1-m_0'$ and
$m_k' = m_k$ if $k \notin \{0, 1\}$. 
This is a contradiction.
Thus we have $m_1 \le 3$.
It is easy to see that $m_r \neq 1$.
Therefore 
$$
3 \ge m_1 \ge m_2 \ge \cdots \ge m_r \ge 2
.$$
Since
$2 + 2 + 2 + 2 = 3 + 3 +2$ and
$
2 + 2^2 + 2^3 + 2 \cdot 2^4 =46
<
48=
3 + 3^2 + 2 \cdot 3^2 \cdot 2 
$,
there are at most three $m_i$'s that are equal to $2$.
If $n = 3k+1$, then
$m_1 = \cdots = m_{r-2} =3$ and $m_{r-1} = m_r=2$.
If $n = 3k+2$, then
$m_1 = \cdots = m_{r-1} =3$ and $m_r=2$.
If $n = 3k \ge 6$, then there are two possibilities: 
\begin{equation}
\label{zureta}
m_1 = \cdots = m_{r-3} =3, \mbox{ and }
 m_{r-2} = m_{r-1} = m_r=2,
\end{equation}
\begin{equation}
m_1 = \cdots = m_r=3.
\end{equation}
Since 
$
2 + 2^2 + 2 \cdot 2^3 
= 22
>
21=
3 + 2 \cdot 3^2 
$, 
it follows that $m_1,\dots,m_r$ satisfies (\ref{zureta}).

Thus the maximum value is equal to 
$$
\left\{
\begin{array}{lcll}
2 \cdot  3^{k-2} \cdot 2^3
+
\sum_{j=1}^{k-2} 3^j + 3^{k-2}(2 + 2^2 )
 & = & \frac{47}{2} \cdot 3^{k-2} -\frac{3}{2} & \mbox{ if } n = 3k ,\\
\\
2 \cdot  3^{k-1} \cdot 2^2
+
\sum_{j=1}^{k-1} 3^j + 3^{k-1} \cdot 2
 & = & \frac{23}{2} \cdot 3^{k-1} -\frac{3}{2} &\mbox{ if }  n = 3k+1,\\
\\
2 \cdot  3^k \cdot 2
+
\sum_{j=1}^k 3^j 
 & = & \frac{11}{2} \cdot 3^k -\frac{3}{2}&\mbox{ if }  n = 3k+2.
\end{array}
\right.
$$
A poset that attains the maximum value is the ordinal sum
$A_r \oplus \cdots \oplus A_1$
of antichains $A_1,\dots,A_r$ such that $|A_i| = m_i$.
\end{proof}

Finally, we discuss when the number of facets of $\Oc^{(e)}_P$ and $\Cc^{(e)}_P$ are coincide.

\begin{Proposition}
\label{rare}
	Let $P$ be a finite poset on $[n]$.
	Then the following conditions are equivalent{\rm :}
	\begin{enumerate}
		\item[{\rm (i)}] $P$ is an antichain{\rm ;}
		\item[{\rm (ii)}] $\Oc^{(e)}_P$ and $\Cc^{(e)}_P$ are unimodularly equivalent{\rm ;}
		\item[{\rm (iii)}] $\Oc^{(e)}_P$ is centrally symmetric{\rm ;}
		\item[{\rm (iv)}] The number of the facets of $\Oc^{(e)}_P$ is equal to that of $\Cc^{(e)}_P$.
	\end{enumerate}
	\end{Proposition}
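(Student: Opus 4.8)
The implications out of (i) are all immediate, so the substance lies in showing that each of (ii), (iii), (iv) forces $P$ to be an antichain. First I would dispose of the easy direction: if $P$ is an antichain then every subset $F\subseteq[n]$ is simultaneously an antichain and a filter with $F_{\min}=F$ and $F_{\comin}=\emptyset$, so the generating sets in the two definitions coincide and $\Oc^{(e)}_P=\Cc^{(e)}_P=[-1,1]^n$; in particular (ii), (iii) and (iv) hold. It therefore remains to prove (ii)$\Rightarrow$(i), (iv)$\Rightarrow$(i) and (iii)$\Rightarrow$(i).

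Since a unimodular equivalence is an affine bijection carrying one polytope onto the other, it induces a bijection on faces and hence on facets; this gives (ii)$\Rightarrow$(iv), so it suffices to prove (iv)$\Rightarrow$(i). Here I would invoke Corollary~\ref{numberoffacets}. As observed in its proof, a maximal chain of length $\ell$ contains exactly $\ell+1$ saturated subchains ending at a maximal element of $P$, so $sc(P)\le\sum_{\ell=0}^{n-1}(\ell+1)\,mc_\ell(P)$, while $mc(P)=\sum_{\ell=0}^{n-1}mc_\ell(P)$; hence
\[
f_{n-1}(\Oc^{(e)}_P)=sc(P)+mc(P)\le\sum_{\ell=0}^{n-1}(\ell+2)\,mc_\ell(P)\le\sum_{\ell=0}^{n-1}2^{\ell+1}mc_\ell(P)=f_{n-1}(\Cc^{(e)}_P).
\]
If (iv) holds, equality holds throughout, so $\sum_{\ell}(2^{\ell+1}-\ell-2)\,mc_\ell(P)=0$; as every summand is nonnegative and $2^{\ell+1}-\ell-2>0$ for $\ell\ge1$, we get $mc_\ell(P)=0$ for all $\ell\ge1$, i.e.\ $P$ has no covering relations, i.e.\ $P$ is an antichain.

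For (iii)$\Rightarrow$(i) I would argue by contradiction using Theorem~\ref{facetsofO}. Since $\Oc^{(e)}_P$ is full-dimensional with the origin in its interior, each facet hyperplane is $\{\xb:\langle\cb,\xb\rangle=1\}$ for a unique $\cb$, and by Theorem~\ref{facetsofO} the set of these functionals is exactly the union of the type-(a) vectors $2^{r-1}\eb_{i_1}-\sum_{j=2}^{r}2^{r-j}\eb_{i_j}$ and the type-(b) vectors $-\sum_{j=1}^{r}2^{r-j}\eb_{i_j}$. The structural observation is that every type-(b) functional has all coordinates $\le0$, while every type-(a) functional has \emph{exactly one} positive coordinate and that coordinate is the power of two $2^{r-1}$. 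Now if $P$ is not an antichain, extending a covering relation upward through covers yields a covering relation $i\lessdot j$ with $j\in\max(P)$, so $\cb:=2\eb_i-\eb_j$ is a type-(a) facet functional. Central symmetry of $\Oc^{(e)}_P$ would force $-\cb=-2\eb_i+\eb_j$ to be a facet functional too; but $-\cb$ has the positive coordinate $+1$ at $j$, so it is not of type (b), and if it were of type (a) its unique positive coordinate $1=2^{0}$ would force $r=1$, leaving no room for the coordinate $-2$ at $i$. This contradiction shows $P$ is an antichain. Assembling: (i)$\Rightarrow$(ii)$\Rightarrow$(iv)$\Rightarrow$(i) and (i)$\Rightarrow$(iii)$\Rightarrow$(i), whence all four conditions are equivalent.

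I expect the main obstacle to be the bookkeeping in (iii)$\Rightarrow$(i): one must be careful that the two families of Theorem~\ref{facetsofO} really exhaust the normalised facet functionals (which Theorem~\ref{facetsofO} does deliver, but it is the fact being leaned on), and the sign analysis is genuinely delicate exactly in the short type-(a) chains ($r=1,2$), where the lone positive coefficient is only $1$ or $2$ and so the cases must be separated by hand.
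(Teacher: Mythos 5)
Your proposal is correct, and its logical skeleton --- (i)$\Rightarrow$(ii),(iii),(iv), then (ii)$\Rightarrow$(iv)$\Rightarrow$(i) and (iii)$\Rightarrow$(i) --- closes all the equivalences; your (iv)$\Rightarrow$(i) step is essentially identical to the paper's, resting on the same observation from Corollary~\ref{numberoffacets} that equality of facet counts forces $\ell+2=2^{\ell+1}$, hence $\ell=0$, for every maximal chain. The genuine divergence is in (iii)$\Rightarrow$(i). The paper argues from the \emph{vertex} side: $(1,\dots,1)=\eb_{F_{\min}}^{\epsilon}+\eb_{F_{\comin}}$ with $F=[n]$ lies in $\Oc^{(e)}_P$, so central symmetry puts $-(1,\dots,1)$ in $\Oc^{(e)}_P$ as well, and by Lemma~\ref{lem:latticepoint} the lattice points of $\Oc^{(e)}_P$ can only be negative on minimal elements of the filter, forcing every element of $P$ to be minimal. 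You argue from the \emph{facet} side, showing that the negative of a type-(a) functional $2\eb_i-\eb_j$ coming from a top covering relation cannot reappear in the inventory of Theorem~\ref{facetsofO}. Both are valid; the paper's version is shorter and needs only the lattice-point description (so it does not depend on the harder Theorem~\ref{facetsofO}), while yours trades that for a sign analysis of the facet normals which, as you note yourself, requires some care for the short chains $r=1,2$ and relies on the completeness of the irredundant facet list. A final cosmetic point: the paper routes (ii) through (iii) (using that $\Cc^{(e)}_P$ is centrally symmetric and both polytopes have the origin as unique interior lattice point), whereas you route (ii) through (iv); either works, and your direct observation that an antichain gives $\Oc^{(e)}_P=\Cc^{(e)}_P=[-1,1]^n$ is a clean way to dispatch (i)$\Rightarrow$(ii).
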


\begin{proof}
First, (ii) $\Rightarrow$ (iv) is trivial.

(ii) $\Rightarrow$ (iii): 
Note that $\Cc^{(e)}_P$ is always centrally symmetric,
and that the origin is the unique interior lattice point in each of $\Cc^{(e)}_P$ and $\Oc^{(e)}_P$.
	Hence if  $\Oc^{(e)}_P$ and $\Cc^{(e)}_P$ are unimodularly equivalent, then $\Oc^{(e)}_P$ is also centrally symmetric.

(iii) $\Rightarrow$ (i): 
	Assume that $\Oc^{(e)}_P$ is centrally symmetric.
	Then since $\eb_1+\cdots+\eb_n \in \Oc^{(e)}_P$, 
	one has $-\eb_1-\cdots-\eb_n \in \Oc^{(e)}_P$.
	By the definition of $\Oc^{(e)}_P$, this implies that each element of $P$ is a minimal element of $P$.
	Hence $P$ is an antichain.

(i) $\Rightarrow$ (ii): 
If $P$ is an antichain, then we have $\Oc^{(e)}_P = \Cc^{(e)}_P$.

(iv) $\Rightarrow$ (i):  
Suppose that the number of the facets of $\Oc^{(e)}_P$ is equal to that of $\Cc^{(e)}_P$.
By the argument in the proof of Corollary~\ref{numberoffacets}, 
each maximal chain of $P$ of length $\ell$ must satisfy
 $\ell + 2 = 2^{\ell+1}$, and hence $\ell = 0$.
Thus $P$ is an antichain.
\end{proof}


\begin{thebibliography}{99}

\bibitem{mirror}
V.~Batyrev, 
Dual polyhedra and mirror symmetry for Calabi-Yau hypersurfaces in toric varieties,  
{\em J. Algebraic Geom.}, {\bf 3} (1994), 493--535. 

\bibitem{Cox}
D.~Cox, J.~Little and H.~Schenck, 
``Toric varieties", 
{Amer. Math. Soc.}, 2011. 

\bibitem{Ehrhart}
E. Ehrhart, 
``Polynom\^{e}s Arithm\'{e}tiques et M\'{e}thode des Poly\'{e}dres en Combinatorie", 
Birkh\"{a}user, Boston/Basel/Stuttgart, 1977. 

\bibitem{binomialideals}
J. Herzog, T. Hibi and H. Ohsugi, 
``Binomial ideals'',
Graduate Texts in Math. {\bf 279}, Springer, Cham, 2018.

\bibitem{Hibi1987}
T. Hibi, Distributive lattices, affine semigroup rings and algebras with 
straightening laws, {\it in} ``Commutative Algebra and Combinatorics'' 
(M. Nagata and H. Matsumura, Eds.), Advanced Studies in Pure Math., 
Volume 11, North--Holland, Amsterdam, 1987, pp. 93 -- 109.

\bibitem{HibiLi}
T.~Hibi and N.~Li,
Chain polytopes and algebras with straightening laws,
\textit{Acta Math. Vietnam.} {\bf 40} (2015), 447--452.

\bibitem{uniequiv}
T.~Hibi and N.~Li,
Unimodular Equivalence of Order and Chain Polytopes,
\textit{Math. Scand.} {\bf 118} (2016), 5--12.

\bibitem{HLLMT}
T. Hibi, N. Li, T. X. Li, L. Mu, and A. Tsuchiya,
Order-chain polytopes,
\textit{Ars Math. Contemp.} {\bf 16} (2019), 299--317. 


\bibitem{HMOS}
T.~Hibi, K.~Matsuda, H.~Ohsugi, and K.~Shibata,
Centrally symmetric configurations of order polytopes,
\textit{J. Algebra} {\bf 443} (2015), 469--478.

\bibitem{HTomega}
T. Hibi and A. Tsuchiya, 
Facets and volume of Gorenstein Fano polytopes,
{\em Math. Nachr.} {\bf 290} (2017), 2619--2628.

\bibitem{HTperfect}
T. Hibi and A. Tsuchiya,
Reflexive polytopes arising from perfect graphs,
\textit{J. Combin. Theory Ser. A} {\bf 157} (2018), 233--246.

\bibitem{Kre}
M. Kreuzer and H. Skarke,
Complete classification of reflexive polyhedra in four dimensions,
\textit{Adv. Theor. Math. Phys.} \textbf{4} (2000), 1209--1230.

\bibitem{Lag}
J. C. Lagarias and G. M. Ziegler,
Bounds for lattice polytopes containing a fixed number of interior points in a sublattice,
\textit{Canad. J. Math.} \textbf{43} (1991), 1022--1035.

\bibitem{MM}
J. W. Moon and L. Moser, On cliques in graphs, 
\textit{Israel J. Math.} {\bf 3} (1965), 23--28.

	\bibitem{harmony}
H.~Ohsugi and T.~Hibi,
Reverse lexicographic squarefree initial ideals and Gorenstein Fano polytopes,
{\em J. Commut. Alg.} {\bf 10} (2018), 171--186.

\bibitem{OTenriched}
H.~Ohsugi and A.~Tsuchiya, Enriched chain polytopes,
to appear in {\em Israel J. Math.}

\bibitem{Petersen}
T.~K.~Petersen,
Enriched $P$-partitions and peak algebras,
\textit{Adv. Math.} {\bf 209} (2007) 561--610.

		\bibitem{twoposetpolytopes}
	R.~P.~Stanley,
	Two poset polytopes, 
	{\em Disc. Comput. Geom.} {\bf 1} (1986), 9--23.


\bibitem{StembridgeEnriched}
J.~R.~Stembridge, Enriched $P$-partitions, 
\textit{Trans. Amer. Math. Soc.} {\bf 349} (1997), 763--788.

	\bibitem{sturmfels1996}
B.~Sturmfels, 
``Gr\"{o}bner bases and convex polytopes," 
Amer. Math. Soc., Providence, RI, 1996. 


\end{thebibliography}
\end{document}